\newcommand{\frs}{\mathfrak s}
\newcommand{\fm}{\mathfrak m}
\newcommand{\fM}{\mathfrak M}
\newcommand{\cB}{{\mathcal B}}
\newcommand{\cI}{{\mathcal I}}
\newcommand{\cC}{{\mathcal C}}
\newcommand{\cE}{{\mathcal E}}
\newcommand{\cW}{{\mathcal W}}
\newcommand{\cP}{{\mathcal P}}
\newcommand{\cX}{{\mathcal X}}
\newcommand{\R}{\mathbb{R}}
\newcommand{\bGamma}{ \mathbb{\Gamma}}
\newcommand{\scA}{\mathscr{A}}
\newcommand{\scB}{\mathscr{B}}
\newcommand{\scC}{\mathscr{C}}
\newcommand{\scD}{\mathscr{D}}
\newcommand{\scE}{\mathscr{E}}
\newcommand{\scG}{\mathscr{G}}
\newcommand{\scI}{\mathscr{I}}
\newcommand{\scJ}{\mathscr{J}}
\newcommand{\scK}{\mathscr{K}}
\newcommand{\scM}{\mathscr{M}}
\newcommand{\scO}{\mathscr{O}}
\newcommand{\scS}{\mathscr{S}}
\newcommand{\scT}{\mathscr{T}}
\newcommand{\Euc}{\mathsf{Euc}}
\newcommand{\Set}{\mathsf{Set}}
\newcommand{\Aff}{\mathsf{\cin Aff}}
\newcommand{\cring}{{\cin\mathsf{Ring}}}
\newcommand{\Man}{\mathsf{Man}}
\newcommand{\lcrs}{\mathsf{L\cin RS}}
\newcommand{\LCRS}{\mathsf{L\cin RS}}
\newcommand{\Open}{\mathsf{Open}}
\DeclareMathAlphabet{\mathpzc}{OT1}{pzc}{m}{it}
\newcommand{\inv}{^{-1}}
\newcommand{\op}[1]{{#1}^{\mbox{\sf{\tiny{op}}}}}
\newcommand{\comp}[1]{{#1}_{\mbox{\sf{\tiny{comp} }}}}
\newcommand{\cin}{C^\infty}
\newcommand{\uu}[1]{\underline{#1}}
\newcommand{\ti}[1]{\tilde{#1}}
\DeclareMathOperator{\CDer}{\cin Der}
\DeclareMathOperator{\Hom}{Hom}
\DeclareMathOperator{\id}{id}
\DeclareMathOperator{\Spec}{Spec}
\DeclareMathOperator{\ev}{\mathsf{ev}}
\DeclareMathOperator{\pr}{\mathsf{pr}}
\numberwithin{equation}{section}
\theoremstyle{definition}
\newtheorem{thm}{Theorem}[section]
\newtheorem{lemma}[thm]{Lemma}
\newtheorem{theorem}[thm]{Theorem}
\newtheorem{corollary}[thm]{Corollary}
\newtheorem*{corollary*}{Corollary}
\newtheorem*{claim*}{Claim}
\newtheorem{definition}[thm]{Definition}
\newtheorem{remark}[thm]{Remark}
\newtheorem{example}[thm]{Example}
\newtheorem{notation}[thm]{Notation}
\newtheorem {construction}[thm]{Construction}
\begin{document}
\title{ Vector fields and flows on  $\cin$-schemes.} 
\author{ Eugene Lerman}
\setcounter{tocdepth}{1}

\begin{abstract} We prove that a vector field on an affine
  $\cin$-scheme $\Spec(\scA)$ has a flow provided the $\cin$-ring
  $\scA$ is finitely generated.  If the vector field is complete then
  the flow is the target map of a groupoid internal to the category of
  $\cin$-schemes.
  \end{abstract}
\maketitle
\tableofcontents

\section{Introduction}

The main goal of this paper is to prove that vector fields on
$\cin$-schemes have (unique maximal) integral curves and that these
integral curves can be put together to form  flows.

Singular spaces arise naturally in differential geometry and its
applications in physics and engineering.  The singularities may come
from constraints or from taking quotients by symmetries or from the
combination of the two (as in singular Marsden-Weinstein-Meyer
reduction).  Over the last seventy plus years many approaches to
singular spaces in differential geometry have been developed ranging
from the diffeological spaces of Souriau and Chen (see \cite{IZ}), the differential
spaces in the sense of Sikorski \cite{Si}, differential spaces in the sense
of Spallek \cite{Spa} and $\cin$-schemes of Dubuc \cite{Dubuc} to derived differential
geometry in its various variants (which started with \cite{Spi}).

Recently I started developing tools of differential geometry for
$\cin$-ringed spaces: differential forms \cite{L-forms}, Cartan
calculus \cite{L-C} and, jointly with Karshon, flows of vector fields
on differential spaces in the sense of Sikorski \cite{KL}.  Sikorski
differential spaces are fairly elementary: for one thing their
structure sheaves are sheaves of {\em  functions}. On the other hand 
differential spaces are somewhat limited, and more general local
$\cin$-ringed spaces appear naturally in differential geometry \cite{Joy}.

One motivation for constructing flows of vector fields on $\cin$-schemes
comes from Poisson geometry. In finite dimensional (differential)
Poisson geometry there are Poisson algebras that are not algebras of
functions on any space.  Such algebras, for instance, arise in the
Sniatycki-Weinstein algebraic reduction \cite{SW} (see \cite{AGJ} a
detailed comparison of algebraic and geometric reductions in
symplectic geometry).  It is straightforward to define a $\cin$-ring
analogue of a Poisson algebra; all algebra of smooth functions on
Poisson manifolds carry this structure.  Many other Poisson algebras
of interest in differential geometry also carry this structure.  One
can show that Dubuc's spectrum functor $\Spec$ \cite{Dubuc} sends a
Poisson $\cin$-ring/algebra to a Poisson $\cin$-scheme.  Sections of
the structure sheaf of a Poisson $\cin$-scheme then give rise to
Hamiltonian vector fields \cite{L_PS}.  It is natural to want to ensure that these
Hamiltonian vector fields have flows.  This is what the main result of
the paper, Theorem~\ref{thm:main} does.  $\cin$-Poisson schemes will
be discussed elsewhere \cite{L_PS}.\footnote{ We note parenthetically that in the
  setting of algebraic geometry the construction of 
Poisson schemes from  Poisson algebras is well-known.  This
  is mentioned, for example, by Polishchuk \cite{Po}.  There is a
 construction in \cite{Ka}.  However, given a Poisson manifold
  $(M, \pi)$ the spectrum of the Poisson algebra $\cin(M)$ in the
  usual algebra-geometric sense is not $(M, \cin_M)$.}

The present paper is meant to be accessible to differential geometers
who are not fluent in $\cin$-rings.  For this reason we review the
definition of $\cin$-rings and some related notions in the Appendix~\ref{app}.  A reader unfamiliar with $\cin$-rings may wish to consult the appendix while reading the rest of the introduction.\\

To state the main result of the paper more precisely  we need to
define integral curves of vector fields on ringed spaces and to define
flows of vector fields.  We do this presently  using the terms defined elsewhere in the paper. Then, after stating
the main result,  we describe the organization of the paper.

\begin{notation} \label{notation:presheaf_maps} We denote the
  collection of all open subsets of a space $X$ by $\Open(X)$.
  As usual a   map $f:\scS\to \scT$ of presheaves over a
  space $X$ is a collection of maps $\{f_U: \scS(U) \to
  \scT(U)\}_{U\in \Open(X) }$, so that for all pairs $W\subset U$, $f_U|_W = f_W$.
\end{notation}  
\begin{definition} \label{def:vf}
A {\sf vector field} on a local $\cin$-ringed space $(X, \scO_X)$ is a
map of sheaves {\em of real vector spaces} $v= \{v_U\}_{U\subset X_A}:\scO_X\to \scO_X$ so that for any
open subset $U\subseteq X$ the corresponding  map $v_U:\scO_X(U)\to \scO_X(U)$ is a
$\cin$-derivation. 
\end{definition}

\begin{notation} \label{not:vf}
We denote the set of all vector fields on a local $\cin$-ringed space
$(X, \scO_X)$ by $\cX(X,\scO_X)$.  
\end{notation}

A standard way of defining an integral curve of a vector field $v$ on a
manifold $M$ with initial condition $p$ is to define it as a smooth
map $\gamma: I \to M$, where $I\subset \R$ is an open interval
containing 0, subject to two conditions:
\begin{enumerate}
\item $\gamma (0)= p$ and
\item $\left. \frac{ d}{dt} \right|_t  (f\circ \gamma) = X(f) (\gamma
  (t))$ for all $t\in I$ and all smooth functions $f\in \cin(M)$.
\end{enumerate}
Therefore an integral curve of $v$ is a smooth map $\gamma:I \to M$
that relates the vector fields $\frac{ d}{dt}$ on $I$ and $v$ on $M$.
This definition generalizes to vector fields on local $\cin$-ringed
spaces.   We first define related (semi-conjugate) vector fields on local $\cin$-ringed spaces.

\begin{definition} \label{def:1.4}
Let $\uu{f}= (f, f_\#): (X,\scO_X)\to (Y,\scO_Y)$ be a map of
  local $\cin$-ringed spaces. The vector fields ${v}\in \cX (X,\scO_X)$
  and ${w}\in \cX(Y, \scO_Y)$ are {\sf $\uu{f}$-related} if the diagram
\[
  \xy
(-10,6)*+{\scO_Y }="1";
(14,6)*+{f_* \scO_X}="2";
(-10, -6)*+{\scO_Y}="3";
 (14, -6)*+{f_* \scO_X}="4";
{\ar@{->}_{w} "1";"3"};
{\ar@{->}^{f_*v} "2";"4"};
{\ar@{->}^{f_\#} "1";"2"};
{\ar@{->}_{f_\#} "3";"4"};
\endxy
\] commutes.  That is, for every open set $U\subset Y$ the derivations
$w_U\in \CDer(\scO_Y(U)) $ and $v_{f\inv(U)} \in \CDer (\scO_X (f\inv
(U))$ are $f_{\#,U} $-related (Notation~\ref{not:der} and Defintion~\ref{def:related-der}).
\end{definition}

The domains of definition of integral curves of vector fields on
manifolds (without boundary) are usually defined as open intervals.
This does not work  for manifolds with boundary as the following
simple example demonstrates.  Consider the vector field
$\frac{\partial}{\partial x}$ on the 
closed disk $M:= \{(x,y)\in \R^2 \mid x^2 + y^2 \leq 1\}$.  Its
maximal integral curves are closed intervals with the exception of the
curves through $(0,1)$ and $(0, -1)$, which are only defined for time
$t=0$.  See \cite{KL} for a related discussion of vector fields and flows on
arbitrary subsets of $\R^n$ and, more generally, on subcartesian
spaces.
\begin{definition}
An {\sf interval} (containing 0) is a connected subset $I$ of the real
line $\R$ (containing 0).
\end{definition}
\begin{remark}
Thus an interval can be open, closed, half-open, bounded, bounded
below or above, unbounded or a single point.   If an interval $I$ is a
singleton we view it as a 0-dimensional manifold.  Otherwise it's a
1-dimensional manifold (which may or may not have a boundary).
\end{remark}

\begin{remark}
Unless the interval $I$ is a point, 
$\frac{d}{dt}: \cin_I \to \cin_I$ is a vector field on a local
$\cin$-ringed space $(I, \cin_I)$.
\end{remark}

\begin{definition} \label{def:int_curve}
An {\sf integral curve} $\uu{\sigma}_p$ of a vector field ${v}$ on a local $\cin$-ringed
space $(X, \scO_X)$ with initial
condition $p\in X$ is
\begin{enumerate}
\item a map  $\uu{\sigma}_p = (\sigma_p, (\sigma_p)_\#): (\{0\},
  \cin_{\{0\}})\to (X, \scO_X)$ with $\sigma_p(0) = p$  {\em  or}
\item a  map $\uu{\sigma}_p = (\sigma_p, (\sigma_p)_\#): (I, \cin_I)\to (X, \scO_X)$ of local $\cin$-ringed spaces from
  a 1-dimensional interval $I$ containing 0 to $ (X, \scO_X)$ so that
\begin{enumerate}
\item $\frac{d}{dt}$ and $v$ are $\uu{\sigma}$-related and
\item $\sigma_p(0) ={p}$.  
\end{enumerate}    
\end{enumerate}   
\end{definition}

\begin{notation} \label{notation:2.15}
Let $v$ be a vector field on a local $\cin$-ringed space $(X, \scO_X)$
and $p\in X$ a point.  We denote  the maximal integral curve of $v$ with
initial condition $p$ (if it exists) by $\uu{\mu}_p
=(\mu_p,  (\mu_p)_\#)$ and
its interval of definition by $K_p$.  
\end{notation}

\begin{definition}\label{def:flow} We use Notation~\ref{notation:2.15} above.  A {\sf
    flow} of a vector field ${v}$ on a local $\cin$-ringed space
  $(X, \scO_X)$ is a map $\uu{\Phi}: (\cW, \scO_\cW)\to (X, \scO_X)$ of
  local $\cin$-ringed spaces so that
\begin{enumerate}
\item $\cW = \bigcup_{p\in X} \{p\}\times K_p \quad (\subseteq X\times
  \R)$  as sets (where $K_p$ is as in  Notation~\ref{notation:2.15}),
\item for any point $p\in X$ there is a map
  $\uu{\zeta}_p = (\zeta_p, (\zeta_p)_\#): (K_p, \cin_{K_p}) \to (\cW,
  \scO_\cW)$ with $\zeta_p (t) = (p, t)$ for all $t\in K_p$ and
\item    for all points
$p \in X$ the composite  $\uu{\Phi}\circ \uu{\zeta}_p$ is the maximal
integral curve $\uu{\mu}_p$ of ${v}$.
\end{enumerate}  
\end{definition}  

The main result of the paper is the following theorem: %
\begin{theorem}
  \label{thm:main}
  Let $\scA$ be a finitely generated and germ determined $\cin$-ring,
  $(X_\scA, \scO_\scA) = \Spec(\scA)$ the corresponding affine
  $\cin$-scheme.  Then any vector field ${v}$ on $\Spec(\scA)$ has
  flow.

  In particular for any point $p\in X_\scA$ there exists a
  unique maximal integral cuve
  $\uu{\mu}_p = (\mu_p, (\mu_p)_\#)$ of the vector field ${v}$ with initial
  condition $\mu_p(0) = p$.
\end{theorem}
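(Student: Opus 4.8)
The plan is to reduce the statement to the manifold case via the finite presentation of $\scA$. Since $\scA$ is finitely generated, we may write $\scA = \cin(\R^n)/I$ for some ideal $I$, and since $\scA$ is germ determined the ideal $I$ is germ determined as well; consequently $X_\scA = \Spec(\scA)$ is homeomorphic to the closed subset $Z = \{x\in\R^n : f(x) = 0 \text{ for all } f\in I\}$, with structure sheaf obtained by restricting and germ-localizing $\cin_{\R^n}/I$. The first step is therefore to set up this identification carefully and to observe that a vector field $v\in\cX(X_\scA,\scO_\scA)$ corresponds, via the universal property of $\Spec$, to a $\cin$-derivation $\delta$ of $\scA$, i.e.\ to a $\cin$-derivation of $\cin(\R^n)$ that preserves $I$ — in other words, to an honest smooth vector field $V = \sum_i a_i \,\partial/\partial x_i$ on $\R^n$ (with $a_i\in\cin(\R^n)$) that is tangent to $Z$ in the sense that $V(I)\subseteq I$.

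The second step is to produce the underlying map of the flow from the classical flow of $V$ on $\R^n$. Let $\Psi: \cW_{\R^n}\to\R^n$ be the (maximal) flow of $V$, defined on the open set $\cW_{\R^n}\subseteq\R^n\times\R$. The tangency condition $V(I)\subseteq I$ forces $Z$ to be invariant under $\Psi$: for $f\in I$ the function $t\mapsto f(\Psi(x,t))$ solves a linear ODE with zero initial data (because $\frac{d}{dt}(f\circ\Psi) = (Vf)\circ\Psi$ and $Vf\in I$ vanishes on the forward orbit), hence vanishes identically, so orbits through points of $Z$ stay in $Z$. One then sets $\cW := \cW_{\R^n}\cap(Z\times\R)$ and checks that $\cW = \bigcup_{p\in Z}\{p\}\times K_p$ where $K_p$ is the maximal interval on which the $Z$-orbit through $p$ is defined; the point here, and the reason the hypotheses are needed, is that once we know orbits stay in the closed set $Z$, maximality of the integral curve as a map of $\cin$-ringed spaces into $\Spec(\scA)$ coincides with maximality of the classical orbit — there is no obstruction of the disk-with-boundary type because $Z$ has no intrinsic boundary forcing premature stopping, yet $K_p$ may legitimately be a closed or half-open interval, which is exactly why Definition~\ref{def:int_curve} and Definition~\ref{def:flow} were phrased in terms of arbitrary intervals.

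The third step is to upgrade the continuous map $\Phi:\cW\to X_\scA$ to a morphism of local $\cin$-ringed spaces $\uu{\Phi} = (\Phi,\Phi_\#)$. Here I would use the adjunction between $\Spec$ and the global sections functor: a map of $\cin$-ringed spaces $(\cW,\scO_\cW)\to\Spec(\scA)$ is the same as a $\cin$-ring homomorphism $\scA\to\scO_\cW(\cW)$, and the latter is induced by $f\mapsto f\circ\Psi$ restricted to $\cW$ (well-defined on $\scA = \cin(\R^n)/I$ precisely by the invariance of $Z$). The relatedness condition $\frac{d}{dt}\sim_{\uu{\Phi}}v$ then follows from the chain rule identity $\frac{d}{dt}(f\circ\Psi) = (Vf)\circ\Psi$ together with the fact that $v$ is induced by $V$; the section maps $\uu{\zeta}_p:(K_p,\cin_{K_p})\to(\cW,\scO_\cW)$, $t\mapsto(p,t)$, are the obvious ones, and $\uu{\Phi}\circ\uu{\zeta}_p$ is, by construction, the integral curve $t\mapsto\Psi(p,t)$, which one identifies with the maximal integral curve $\uu{\mu}_p$ of $v$.

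I expect the main obstacle to be the bookkeeping around the structure sheaves and germ-determinedness: verifying that $f\circ\Psi$ descends to a well-defined section of $\scO_\cW$ over arbitrary opens (not just that it kills $I$ pointwise, but that it respects the germ-localization defining $\scO_\cW$), and — more delicately — confirming that the notion of \emph{maximal} integral curve in the sense of Notation~\ref{notation:2.15}, which a priori allows exotic intervals, really does agree with the classical maximal orbit inside $Z$. This amounts to showing that any integral curve $\uu{\sigma}_p:(I,\cin_I)\to\Spec(\scA)$ of $v$ factors through $\Psi$, i.e.\ that its underlying map is $t\mapsto\Psi(p,t)$; this should follow from the uniqueness of classical ODE solutions once one pushes $\uu{\sigma}_p$ forward along $\Spec(\scA)\hookrightarrow\R^n$ and uses that the composite solves the $V$-ODE. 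Handling half-open and closed $I$ requires a one-sided uniqueness argument but no new ideas. The finite-generation hypothesis is used exactly once — to embed into a single $\R^n$ — and germ-determinedness is used to ensure $\Spec(\scA)$ genuinely sits inside $\R^n$ as the zero set with the expected sheaf, so that the tangency condition $V(I)\subseteq I$ is available.
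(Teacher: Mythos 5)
Your overall strategy (present $\scA=\cin(\R^n)/J$, lift the derivation to a vector field $V$ on $\R^n$ via Lemma~\ref{lem:4.2}, and push the classical flow $\Psi$ of $V$ down through $\Spec$ using the adjunction and Theorem~\ref{thm:3.9}) is exactly the paper's strategy, but your second step contains a genuine gap that breaks the construction. You claim that $V(J)\subseteq J$ forces the zero set $Z=Z_J$ to be invariant under $\Psi$, arguing that $\frac{d}{dt}(f\circ\Psi)=(Vf)\circ\Psi$ and ``$Vf\in J$ vanishes on the forward orbit.'' That is circular: $Vf\in J$ only tells you $Vf$ vanishes on $Z$, and you may use this along the orbit only if you already know the orbit stays in $Z$ --- which is the statement being proved. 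The linear-ODE/Gronwall argument you have in mind works when $J$ is finitely generated (write $Vf_i=\sum_j a_{ij}f_j$ and solve the resulting linear system along the orbit), but the kernel of a presentation $\Pi:\cin(\R^n)\to\scA$ of a finitely generated $\cin$-ring need not be a finitely generated ideal, and the invariance claim is simply false in general. The paper's Example~\ref{ex:square} is a counterexample: for the unit square $Z\subset\R^2$ with its vanishing ideal $I$, \emph{every} vector field on $\R^2$ preserves $I$ (Example~\ref{ex:3.2}), in particular the rotation field $V=x\partial_y-y\partial_x$, yet orbits of $V$ through points of $Z$ with $1<x^2+y^2<2$ leave $Z$ and return.

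Because of this, your set $\cW:=\cW_{\R^n}\cap(Z\times\R)$ is too big: it contains pairs $(p,t)$ with $p\in Z$ but $\Psi(p,t)\notin Z$, so the underlying map of your proposed $\uu{\Phi}$ does not land in $X_\scA$, and $\uu{\Phi}\circ\uu{\zeta}_p$ is not an integral curve of $v$; likewise your $K_p$ (``the maximal interval of the $Z$-orbit'') is not the domain of the maximal integral curve of $v$. The paper repairs exactly this point: it defines $K_p$ as the connected component of $0$ in $\gamma_p\inv(Z_J)$, proves that $\cW=\bigcup_{p\in Z}\{p\}\times K_p$ is closed in the domain $U$ of $\Psi$ (Lemma~\ref{lem:5.1}), and then cuts the ring down by the ideal $\scI=\langle\pr^*J\rangle+\langle\Psi^*J\rangle+I'_\cW$ of \eqref{eq:5.44}, where the extra ideal $I'_\cW$ of \eqref{eq:5.3} (whose zero set is shown to be $\cW$ in Lemma~\ref{lem:5.11}) is precisely what removes the points where the orbit has left $Z$ and come back. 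Your third step would then go through essentially as you describe, but only after the domain $\Spec(\scB)$, $\scB=\cin(U)/\scI$, has been built with this correction; without it the well-definedness of $f\mapsto f\circ\Psi$ on $\scA$ and the identification of $\uu{\Phi}\circ\uu{\zeta}_p$ with $\uu{\mu}_p$ both fail. (Invariance does hold when $v$ is complete, which is why the paper's groupoid statement in Section~\ref{sec:6} is restricted to that case.)
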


\subsection*{Organization of the paper} In the next section, Section~\ref{sec:2},
we recall the notion  of localization of  $\cin$-rings, local $\cin$-ringed
spaces and their morphisms and, following Joyce \cite{Joy}, review a
construction of Dubuc's spectrum functor $\Spec$ from $\cin$-rings to
local $\cin$-ringed spaces.   In Section~\ref{sec:3} we show that
$\Spec$ sends derivations of $\cin$-rings to vector fields on the
corresponding affine scheme.  Moreover $\cin$-derivations related by
morphisms of $\cin$-rings get sent by $\Spec$ to 
vector fields related by maps of local $\cin$-ringed spaces.
In Section~\ref{sec:int} we show that vector fields on finitely
generated affine schemes have unique maximal integral curves.   In Section~\ref{sec:flows} we put
maximal integral curves together to obtain flows. In
Section~\ref{sec:6}  we show that for a complete  vector field the
  the flow is the target map of a groupoid internal to the category of
  $\cin$-schemes.

\subsection*{Acknowledgements} I am grateful to Yael Karshon for many
fruitful discussions about differential spaces, $\cin$-rings and
related mathematics.  I would also like to thank Rui Loja Fernandes
for many conversations.  In particular the proof of
Lemma~\ref{lem:5.1} is essentially due to him.

\section{Preliminaries} \label{sec:2}
\begin{remark}
Unless noted otherwise all manifolds  are smooth, second countable and Hausdorff.
\end{remark}

We start by recalling the notion of a
$\cin$-ringed space and of a local $\cin$-ringed space.

\begin{definition}
A {\sf $\cin$-ringed space} is a pair $(X, \scO_X)$ where $X$ is a
topological space and $\scO_X$ is a sheaf of $\cin$-rings on $X$.

A {\sf local} $\cin$-ringed space is a $\cin$-ringed space so that all
the stalks $\scO_{X,x}$ of the sheaf $\scO_X$ are local $\cin$-rings
in the sense of Definition~\ref{def:loc_ring}.
\end{definition}

\begin{definition}
A {\sf morphism} (or a {\sf map}) of local $\cin$-ringed spaces from $(X,\scO_X)$ to
$(Y,\scO_Y)$ is a pair $\uu{f}= (f, f_\#)$ where $f: X\to Y$ is continuous and
$f_\#: \scO_Y\to f_* \scO_X$ is a map of sheaves of $\cin$-rings on the space $Y$.
\end{definition}
\begin{remark}
  Note that the map $(f,f_\#): (X,\scO_X) \to (Y,\scO_Y)$ of local
  $\cin$-ringed spaces induces, for every point $x\in X$,  a map of
  stalks $f_x: \scO_{Y,f(x)} \to \scO_{X, x}$
  
  In algebraic geometry one defines a morphism of locally ringed
  spaces to be a pair $(f,f_\#): (X,\scO_X) \to (Y,\scO_Y)$ so that
  the induced maps on stalks $f_x$ are all local: they are required to
  take the unique maximal ideal to the unique maximal ideal.     For
  local $\cin$-ringed spaces this is unnecessary because any map of
  local $\cin$-rings automatically preserves maximal ideals (Lemma~\ref{lem:2.20}).
\end{remark}

\begin{definition}[The category $\LCRS$] \label{def:lcrs}
Local $\cin$-ringed spaces and their morphisms form a category which
we denote by $\LCRS$.
\end{definition}  

\begin{remark}
There is a fully faithful functor from the category $\Man$ of
$\cin$-manifolds to local $\cin$-ringed spaces.  It sends a manifold
$M$ to the pair $(M, \cin_M)$ where $\cin_M$ is the sheaf of smooth
functions on $M$.   It sends a smooth map $f:M\to N$ to $\uu{f} = (f,
f_\#): (M, \cin_M) \to (N, \cin_N)$ where $f_\#: \cin_N\to f_* \cin_M$
is $f^*$, the pull-back by $f$.  See \cite[Corollary~4.27]{Joy}.
\end{remark}

\subsection{The $\cin$-ring spectrum functor $\Spec$}

There is a (contravariant) functor $\Gamma$, the global sections
functor, from the category of local
$\cin$-ringed spaces $\LCRS$ %
to the (opposite) category  $\cring$ of
$\cin$-rings.  Namely, given a
$\cin$-ringed space $(M, \scO_M)$, the functor $\Gamma$ sends it to the ring
$\scO_M(M)$ of global sections.   Given a map $(f,f_\#):
(N, \scO_N)\to  (M,\scO_M) $ of local $\cin$-ringed spaces we have a map of $\cin$-rings
$f_\#:\scO_M(M) = f_*(\scO_N)(M) = \scO_N(f\inv (M) ) = \scO_N(N)$.  In other words
\[
\Gamma: \LCRS \to \op{\cring}
\] 
is given by
\[
\Gamma( (N, \scO_N)) \xrightarrow{(f,f_\#)} (M,\scO_M) ) :=
\scO_M(M)\xrightarrow{f_\#} \scO_N (N).
\]  
It is easy to see that $\Gamma$ is in fact a functor.  Thanks to a
theorem of Dubuc \cite{Dubuc}, the global section functor $\Gamma$ has
a right adjoint $\Spec: \op{\cring} \to \LCRS $ (which is unique up to
a unique isomorphism).

\begin{definition} \label{def:affine} An {\sf  affine
  $\cin$-scheme} is a local $\cin$-ringed space isomorphic
to $\Spec(\scA)$ for some $\cin$-ring $\scA$.

The {\sf category of
  affine schemes}  $\Aff$ is the essential image of the functor $\Spec$.
\end{definition}

\begin{notation} \label{not:2.7}
Given an affine scheme $\Spec(\scA)$ we denote the underlying
topological space by $X_\scA$. To avoid the clutter we  denote  its
structure sheaf by $\scO_\scA$ (and not by $\scO_{X_\scA}$ as we have
done for general local $\cin$-ringed space).  Thus
\[
\Spec(\scA) = (X_\scA, \scO_\scA).
\]  
\end{notation}

\begin{notation}
We denote the unit and the counit of the adjunction $\Gamma \vdash
\Spec$ by $\eta$ and $\varepsilon$, respectively.
\end{notation}
\subsection*{Complete $\cin$-rings}
Unlike the spectrum functor  in algebraic geometry $\Spec: \op{\cring} \to
\Aff$ is {\sf  not} an equivalence of categories and  $\Gamma \vdash
\Spec$ is not an adjoint equivalence.   
For instance, there are
nonzero $\cin$-rings with {\em no} $\R$-points
(Definition~\ref{def:A.17} and
Remark~\ref{rmrk:no-R-points}).  The spectrum of such a ring is  the
empty set with the zero $\cin$-ring; which is the same as $\Spec(0)$.
In other words, there are $\cin$-rings $\scA$ for which the components
$\varepsilon_\scA :\scA \to \Gamma (\Spec(\scA))$ of the counit of adjunction are
not isomorphisms.

However, there is a full
subcategory of $\op{\cring} $ consisting of the so called complete
rings (Definition~\ref{def:complete} below) so that the restriction of $\Spec$ to
this subcategory is part of an adjoint equivalence.

\begin{definition} \label{def:complete}(cf. \cite[Definition~4.35]{Joy}) A $\cin$-ring $\scA$ is {\sf
    complete} if the component $\varepsilon_\scA:\scA \to \Gamma
  (\Spec(\scA))$ of the counit of adjunction $\Gamma \vdash
\Spec$ is an isomorphism.
\end{definition}

\begin{remark}\label{rmrk:2.10} Dubuc \cite{Dubuc} showed
  (\!\!\cite[13.~Theorem]{Dubuc}) that if a $\cin$-ring $\scA$ is
  finitely generated (see Definition~\ref{def:fg}) and germ determined
  (Definition~\ref{def:germ_determined}) then $\varepsilon_\scA:\scA
  \to \Gamma (\Spec(\scA))$ is an isomorphism, i.e., that the ring $\scA$ is
  complete.  Note that if $\scA = \cin(\R^n)/J$ for some finitely
  generated ideal $J\subset \cin(\R^n)$, then $\scA$ is germ
  determined \cite{Dubuc}.\\

  The fact that any complete ring has to be germ determined is not
  hard to see: Joyce's construction of $\Spec$
  (Construction~\ref{construction:1} below), Lemma~\ref{lem:A.24} and
  Corollary~\ref{cor:A.25} imply that the  component of the counit
  $\varepsilon_\scA:\scA \to \Gamma(\Spec(\scA))$ is injective if and
  only if the $\cin$-ring $\scA$ is germ determined.  The surjectivity
  of $\varepsilon_\scA:\scA
  \to \Gamma (\Spec(\scA))$ is implied by the existence of partition
  of unity on $\R^n$ \cite{Joy}.
\end{remark}  

\begin{notation}
We denote the full subcategory of the category $\cring$ consisting of complete
$\cin$-rings by $\comp{\cring}$.
\end{notation}

It follows from the property of adjunctions (see, for example
\cite[Exercise~2.2.11]{Leinster}) that
\[
  \Spec|_{\comp{\cring}} : \op{(\comp{\cring})} \to \lcrs
\] is fully faithful and that the essential image of this restriction
consists of all locally ringed spaces such that the components
$\eta_{(X,\scO_X)}: (X,\scO_X) \to \Spec (\Gamma (X,\scO_X))$ of the
unit $\eta$ of adjunction are isomorphisms.  Joyce
(\cite[Theorem~3.36(a)]{Joy}) shows that this essential image is
exactly the category $\Aff$ of affine $\cin$-schemes
(Definition~\ref{def:affine}).  We thus get an adjoint equivalence
\[
   \left.\Gamma\right|_\Aff: \Aff \quad
 \substack{\longrightarrow\\[-1em] \longleftarrow} \quad \op{(\comp{\cring})}:
 \left. \Spec\right|_{\op{(\comp{\cring})}}
\]
given by restrictions of $\Spec$ and $\Gamma$,
respectively. 

In order to construct integral curves and flows of vector fields on
schemes we need to recall some details of the construction of the functor $\Spec$
given in \cite{Joy}.  Thus the rest of the section is a paraphrase of an argument in \cite{Joy}.
We start by reviewing  a construction of a
functor from $\cin$-rings to topological spaces.  Recall that 
an {\sf $\R$-point} of a $\cin$-ring $\scA$ is a map of $\cin$-rings $p:\scA
\to \R$ (Definition~\ref{def:A.17}).
\begin{notation} \label{not:2.13}
We denote the set of all $\R$-points of a $\cin$-ring $\scA$ by
$X_\scA$.
Thus
\[
X_\scA: =\{ p:\scA\to \R\mid p \textrm{ is a map of $\cin$-rings}\}
\equiv \Hom(\scA, \R).
\]  
\end{notation}
We will see shortly Notations~\ref{not:2.7} and \ref{not:2.13} are consistent.
\begin{remark} \label{rmrk:Milnors}
It is well-known that if $M$ is a (second-countable Hausdorff)
manifold then the map
\begin{equation} \label{eq:2.1}
M\to \Hom(\cin(M), \R),\qquad p\mapsto \ev_p
\end{equation}
is a bijection (here and below $\ev_p (f) = f(p)$ for all $f\in
\cin(M)$). This is a theorem of Pursell \cite[Chapter
8]{Pursell}. It is often referred to as Milnor's exercise. 
\end{remark}

\begin{construction}[The Zariski topology $\scT_\scA$ on the set
  $X_\scA$ of $\R$-points of a $\cin$-ring $\scA$] \label{constr:Z}
  The set $X_\scA$ of $\R$-points of a $\cin$-ring $\scA$ comes
  equipped with a natural topology, the Zariski topology.  It is
  defined as follows: for $a\in \scA$ let
\[
  U_a: = \{p\in X_\scA \mid p(a) \not = 0\}.
\]
Since for a point $p$ of $\scA$ and $a,b\in \scA$
\[
p(ab) \not = 0 \quad \Leftrightarrow \quad p(a) \not = 0 \textrm{ and
} p(b) \not = 0,
\]
we have $U_{ab} = U_a \cap U_b$. Hence the collection  $
\{ U_a\}_{a\in \scA}$ of sets
is a basis for a topology on $X_\scA$, which we denote by $\scT_\scA$.
\end{construction}

\begin{remark} \label{rmrk:2.14}
By a theorem of Whitney any closed subset $C$ of a smooth manifold $M$
is the set of zeros of some smooth function $f\in \cin(M)$.
It follows that the bijection \eqref{eq:2.1} is a homeomorphism
between the manifold $M$ with its given topology and the set
$X_{\cin(M)}$ of $\R$-points of $\cin(M)$ with the Zariski topology $\scT_{\cin(M)}$.
\end{remark}  

\begin{definition} \label{def:2.17}
Let $M$ be a manifold and $J\subset \cin(M)$ an ideal.  The {\sf zero set}  of the ideal $J$ is the subset $Z_J\subset M$ given by
\[
Z_J:= \{p \in M\mid j(p) = 0\textrm{ for all }j\in J\}.
\]
\end{definition}
\begin{remark}
Since by assumption the manifold $M$ is Hausdorff, the set $Z_J$ of Definition~\ref{def:2.17}
is closed  in $M$.
\end{remark}
The following lemma is a generalization  Remark~\ref{rmrk:2.14}.
\begin{lemma}  \label{lem:2.19} Let $M$ be a manifold, $J\subset \cin(M)$ the ideal and
$\scA = \cin(M)/J$ the quotient $\cin$-ring (cf.\ Lemma~\ref{lem:2.13}).    Then any point $p$
in the zero set $Z_J$ of $J$ gives rise to an $\R$-point
\[
\overline{ev}_p:\scA \to \R, \qquad \overline{ev}_p(f+J): = f(p).
\]  
The map
\[
\nu: Z_J\to X_\scA, \qquad \nu( p) := \overline{ev}_p
\]
is a homeomorphism (where $Z_J$ is given the subspace topology and
$X_\scA$ the Zariski topology).
\end{lemma}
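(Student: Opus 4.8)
The plan is to check the four ingredients bundled into the statement one at a time: that $\overline{ev}_p$ is a well-defined map of $\cin$-rings for every $p\in Z_J$, that $\nu$ is injective, that $\nu$ is surjective, and that $\nu$ is a homeomorphism onto $X_\scA$. First, for a point $p\in M$ the ordinary evaluation $\ev_p\colon\cin(M)\to\R$ is a map of $\cin$-rings (Remark~\ref{rmrk:Milnors}), and if $p\in Z_J$ then $j(p)=0$ for all $j\in J$, so $J\subseteq\ker\ev_p$. Since $\pi\colon\cin(M)\to\cin(M)/J=\scA$ is the quotient in the category of $\cin$-rings (Lemma~\ref{lem:2.13}), $\ev_p$ factors uniquely through $\pi$ by a map of $\cin$-rings $\scA\to\R$; unwinding the factorization produces exactly the assignment $f+J\mapsto f(p)$, which is therefore well defined and is a $\cin$-ring map. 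Hence $\nu\colon p\mapsto\overline{ev}_p$ is a well-defined map $Z_J\to X_\scA$.

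Injectivity of $\nu$ I would deduce from the fact that $\cin(M)$ separates the points of the Hausdorff manifold $M$ (one can exhibit a bump function distinguishing any two given distinct points): if $\overline{ev}_p=\overline{ev}_q$ then $f(p)=f(q)$ for all $f\in\cin(M)$, forcing $p=q$. For surjectivity I would start with an arbitrary $\R$-point $q\colon\scA\to\R$, form $q\circ\pi\colon\cin(M)\to\R$, and apply Pursell's theorem (Milnor's exercise, Remark~\ref{rmrk:Milnors}) to obtain $p\in M$ with $\ev_p=q\circ\pi$; then $j(p)=\ev_p(j)=q(\pi(j))=q(0)=0$ for every $j\in J$ shows $p\in Z_J$, and $\overline{ev}_p(f+J)=f(p)=q(\pi(f))=q(f+J)$ shows $\nu(p)=q$.

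For the topological statement I would argue with bases on both sides. On the one hand, for $a=f+J\in\scA$ one has
\[
\nu\inv(U_a)=\{p\in Z_J\mid \overline{ev}_p(f+J)\neq 0\}=Z_J\cap\{x\in M\mid f(x)\neq 0\},
\]
which is open in the subspace topology on $Z_J$; since the sets $U_a$ form a basis of $\scT_\scA$, this gives continuity of $\nu$. On the other hand, the non-vanishing loci $\{f\neq 0\}$ with $f\in\cin(M)$ form a basis of the manifold topology of $M$ (via bump functions, or equivalently via Whitney's theorem as in Remark~\ref{rmrk:2.14}), so the sets $Z_J\cap\{f\neq 0\}$ form a basis of the subspace topology on $Z_J$; and using the bijectivity of $\nu$ already established one checks $\nu\bigl(Z_J\cap\{f\neq 0\}\bigr)=U_{f+J}$, which is open in $X_\scA$. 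Thus $\nu$ is an open map, and a continuous open bijection is a homeomorphism.

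I do not anticipate a genuine obstacle here: the only inputs beyond formal manipulation are Pursell's theorem for surjectivity and the basis description of the topology of $M$ for the topological part, while everything else is the universal property of the $\cin$-quotient together with bookkeeping. If there is a subtle point, it is to keep the Zariski basis $\{U_a\}_{a\in\scA}$ and the subspace basis $\{Z_J\cap\{f\neq 0\}\}_{f\in\cin(M)}$ carefully aligned, which is why I would phrase the last step symmetrically in those two families of sets rather than chasing individual open subsets.
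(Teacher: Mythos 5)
Your proposal is correct and follows essentially the same route as the paper: injectivity from the fact that $\cin(M)$ separates points, surjectivity by applying Milnor's exercise (Pursell) to $q\circ\pi$, and the homeomorphism by matching the Zariski basic open sets $U_{f+J}$ with the sets $\{f\neq 0\}\cap Z_J$ via Whitney's theorem. The only differences are cosmetic: you spell out the well-definedness of $\overline{ev}_p$ via the universal property of the quotient and phrase the last step as ``continuous open bijection,'' while the paper computes the image and preimage of basic opens directly.
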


\begin{proof}
Since the quotient map $\Pi:\cin(M) \to \scA = \cin(M)/J$ is onto, the
map
\[
  \Pi^*:X_\scA \to X_{\cin(M)}, \qquad \Pi^*(q) = q\circ \Pi
\]
  is injective.  By definition of the map
$\nu$,
\[
\nu(p) \circ \Pi = \ev_p.
\]
Therefore if $\nu(p) = \nu(q)$, then $\ev_p = \ev_q$.  Since the set
of smooth functions $\cin(M)$
separates points of the manifold $M$, %
the map  $\nu$ is injective.

Given $\psi \in X_\scA$, $\psi \circ \Pi \in \Hom(\cin(M), \R)$.  By
Milnor's exercise, $\psi \circ \Pi = \ev_p$ for some point $p\in M$.
If $p\not \in Z_J$ there is $j\in J$ so that $\ev_p(j) \not = 0$.
Contradiction, since
\[
\ev_p (j) = \psi (\Pi (j)) = \psi (0) = 0.
\]
Therefore $\nu$ is onto.

It remains to check that $\nu$ is a
homeomorphism.  Recall that the topology on $X_\scA$ is generated by
the sets of the form $U_a: = \{\psi\in X_\scA \mid \psi (a) \not =
0\}$, where $a\in \scA = \cin(M)/J$.   By a theorem of Whitney any
open subset of the manifold $M$ is of the form $\{f\not = 0\}$ for
some $f\in \cin(M)$ (cf.\ Remark~\ref{rmrk:2.14}).   Now given  $f\in \cin(M)$,
\[
\nu (\{f \not  =0\} \cap Z_J) = \{\overline{\ev}_p \in X_\scA \mid f(p)
\not = 0\} = U_{f+J}.
\]
And
\[
\nu\inv (U_{f+J}) = \{p\in Z_J \mid f(p) \not = 0\} = \{f\not = 0\}
\cap Z_J. 
\]
It follows that  $\nu$ is a homeomorphism.
\end{proof}

We note parenthetically that Lemma~\ref{lem:2.19} has a partial
converse.
\begin{lemma}
Let $M$ be a manifold, $C\subset M$ a closed subset and $I\subset
\cin(M)$ the vanishing ideal of $C$:
\[
I = \{ f\in \cin(M) \mid f(x) = 0 \textrm{ for all } x\in C\}.
\]
Then the zero set $Z$ of $I$ is $C$.
\end{lemma}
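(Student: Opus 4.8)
The plan is to show both inclusions $Z \subseteq C$ and $C \subseteq Z$, where $Z$ is the zero set of $I$ and $I$ is the vanishing ideal of the closed set $C$. The inclusion $C \subseteq Z$ is immediate: by definition of $I$, every function $f \in I$ vanishes on all of $C$, so every point of $C$ lies in $Z = Z_I$. The content of the lemma is therefore the reverse inclusion $Z \subseteq C$, equivalently $M \setminus C \subseteq M \setminus Z$: given a point $x \notin C$, I must produce a function $f \in I$ with $f(x) \neq 0$.

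The key step is the following. Since $C$ is closed and $M$ is a (second countable, Hausdorff) smooth manifold, and since $x \notin C$, I can invoke a partition-of-unity / bump-function argument — or, more directly, Whitney's theorem as already used in Remark~\ref{rmrk:2.14} and Lemma~\ref{lem:2.19}: there exists a smooth function $g \in \cin(M)$ whose zero set is exactly $C$. Then $g \in I$ since $g$ vanishes on $C$, while $g(x) \neq 0$ because $x \notin C = g^{-1}(0)$. Hence $x \notin Z_I = Z$. Alternatively, without appealing to Whitney: choose an open neighborhood $V$ of $x$ with $\overline{V} \cap C = \emptyset$ (using that $C$ is closed and $M$ is locally compact Hausdorff, or just Hausdorff and regular), then pick a smooth bump function $f$ supported in $V$ with $f(x) = 1$; this $f$ vanishes identically on $C$, so $f \in I$, yet $f(x) = 1 \neq 0$, again giving $x \notin Z$. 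Either way the two inclusions combine to give $Z = C$.

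I expect no real obstacle here; the statement is essentially a restatement that closed sets in a smooth manifold are precisely the zero sets of (vanishing ideals of) themselves, and both directions are short. The only point requiring a little care is making sure the ambient hypotheses (Hausdorff, and enough smooth functions — guaranteed by second countability via partitions of unity, as already assumed for all manifolds in this paper) are in place so that the separating bump function exists; this is exactly the input already used elsewhere in Section~\ref{sec:2}, so it is available. Thus the proof amounts to: (1) observe $C \subseteq Z$ trivially from the definition of $I$; (2) for $x \notin C$, construct $f \in I$ with $f(x) \neq 0$ via a bump function or Whitney's theorem, giving $Z \subseteq C$; (3) conclude $Z = C$.
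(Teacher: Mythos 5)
Your proof is correct and follows essentially the same route as the paper: the inclusion $C\subseteq Z$ is immediate from the definition of $I$, and for $p\notin C$ one produces a smooth function vanishing on $C$ but not at $p$ (the paper simply cites the separation of a point from a closed set in a Hausdorff manifold, which is your bump-function/Whitney step). No issues.
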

\begin{proof}
If $p\in C$ is a point then any function $f\in I$ vanishes on $p$
hence $p\in Z$.   Conversely if $p\not \in C$ then, since $C$ is
closed and $M$ is Hausdorff, there exists a function $f\in \cin(M)$ so
that $f(p) \not = 0$ and $f|_C = 0$.  Hence $p\not \in Z$.  
\end{proof}

To construct the functor $\Spec$ on objects, given a $\cin$-ring
$\scA$  we need to equip the 
topological space $X_\scA$ of its points with a sheaf $\scO_\scA$ of
$\cin$-rings. To construct this structure sheaf  $\scO_\scA$ we need to recall a
few things about localizations of $\cin$-rings.

\begin{lemma} \label{lem:2.21}
Given a $\cin$-ring $\scA$ and a set $\Sigma$ of nonzero elements of $\scA$
there exists a $\cin$-ring $\scA\{\Sigma\inv\}$ and a map $\gamma:
\scA\to \scA\{\Sigma\inv\}$ of $\cin$-rings with the following universal property:
\begin{enumerate}
\item $\gamma(a)$ is invertible in $\scA\{\Sigma\inv\}$ for all $a\in
  \Sigma$ and 
\item  for
any map $\varphi:\scA\to \scB$ of $\cin$-rings so that $\varphi(a)$ is
invertible in $\scB$ for all $a\in \Sigma$ there exists a unique map
$\overline{\varphi} :\scA\{\Sigma\inv\} \to \scB$ making the diagram
\[
  \xy
(-10,10)*+{\scA }="1";
(14,10)*+{\scB}="2";
(-10, -6)*+{\scA\{\Sigma\inv\}}="3";
{\ar@{->} ^{\varphi} "1";"2"};
{\ar@{->}_{\gamma} "1";"3"};
{\ar@{->}_{\overline{\varphi}} "3";"2"};
\endxy
\]  
commute.
\end{enumerate}
\end{lemma}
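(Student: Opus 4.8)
The plan is to build $\scA\{\Sigma\inv\}$ as a pushout in the category $\cring$ of $\cin$-rings, reducing the entire construction to a single ``universal'' localization, that of the free $\cin$-ring on one generator. Recall (see Appendix~\ref{app}) that $\cin(\R^n)$ is the free $\cin$-ring on $n$ generators $x_1,\dots,x_n$, so that $\psi\mapsto(\psi(x_1),\dots,\psi(x_n))$ is a natural bijection $\Hom_{\cring}(\cin(\R^n),\scB)\to|\scB|^n$; more generally the free $\cin$-ring on a set $S$ is $\cin(\R^S):=\colim_{S'\subseteq S\ \mathrm{finite}}\cin(\R^{S'})$, and $\cring$ is cocomplete. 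The only input specific to smooth functions that I will need is the identification of $\cin(\R\setminus\{0\})$ with the localization of $\cin(\R)$ at the coordinate $x$, and that is where the real work lies.

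So, first, I would treat the one-variable case: the restriction $\gamma_0\colon\cin(\R)\to\cin(\R\setminus\{0\})$ is a localization inverting $x$, i.e.\ $\gamma_0(x)$ is invertible and for every $\cin$-ring $\scB$ and every invertible $b\in\scB$ there is a unique $\cin$-ring map $\cin(\R\setminus\{0\})\to\scB$ carrying $\gamma_0(x)$ to $b$. To see this, identify $\R\setminus\{0\}$ with the closed hypersurface $Z=\{(t,s)\in\R^2\mid ts=1\}$ via the first coordinate. Since $0$ is a regular value of $g(t,s)=ts-1$, one may use $g$ as a local coordinate near $Z$, so Hadamard's lemma makes every smooth function vanishing on $Z$ locally divisible by $g$; away from $Z$ it is divisible by $g$ trivially, and since $g$ is nowhere zero there the local quotients patch to a single global one. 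Hence the vanishing ideal of $Z$ is the principal ideal $(ts-1)$, and since restriction $\cin(\R^2)\to\cin(Z)$ is surjective we get $\cin(\R\setminus\{0\})\cong\cin(\R^2)/(ts-1)$. As $\cin(\R^2)$ is free on $t,s$, a $\cin$-ring map $\cin(\R^2)/(ts-1)\to\scB$ is exactly a pair $(b,c)$ of elements of $\scB$ with $bc=1$, that is, an invertible $b$ (and then $c=b\inv$ is forced), with $b$ the image of $t$; this is precisely the asserted universal property, with $\gamma_0$ itself corresponding to the pair $(x,x\inv)$.

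Finally I would assemble the general localization. Let $F=\cin(\R^\Sigma)$ be the free $\cin$-ring on the set $\Sigma$ with generators $x_a$ $(a\in\Sigma)$, let $\chi\colon F\to\scA$ be the classifying map $x_a\mapsto a$, let $G=\coprod_{a\in\Sigma}\cin(\R\setminus\{0\})$ be the coproduct in $\cring$ of one copy of $\cin(\R\setminus\{0\})$ per $a\in\Sigma$, and let $\gamma_0^\Sigma\colon F\to G$ be the coproduct of the maps $\gamma_0$. Set $\scA\{\Sigma\inv\}$ to be the pushout of $G\xleftarrow{\gamma_0^\Sigma}F\xrightarrow{\chi}\scA$, with $\gamma\colon\scA\to\scA\{\Sigma\inv\}$ the canonical map. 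Then (i) holds because each $\gamma_0^\Sigma(x_a)$ is invertible in $G$ by the previous step, hence so is its image $\gamma(a)$ in the pushout; and (ii) holds because a map $\varphi\colon\scA\to\scB$ inverting every $a\in\Sigma$ makes $\varphi\circ\chi\colon F\to\scB$ send each $x_a$ to an invertible element, so by the one-variable universal property applied to each summand together with the universal property of the coproduct $G$ it factors uniquely through $\gamma_0^\Sigma$, and then the universal property of the pushout yields a unique $\overline{\varphi}\colon\scA\{\Sigma\inv\}\to\scB$ with $\overline\varphi\circ\gamma=\varphi$. (Alternatively one can first handle finite $\Sigma$ by iterating the one-element case — using $\scA\{a\inv\}\{b\inv\}\cong\scA\{(ab)\inv\}$ — and then pass to the filtered colimit over the finite subsets of $\Sigma$; that comparison is routine.) The main obstacle is entirely in the middle paragraph: showing that $\cin(\R\setminus\{0\})$ really is $\cin(\R)\{x\inv\}$, i.e.\ that the vanishing ideal of the hyperbola is principal; once that is in hand the rest is formal.
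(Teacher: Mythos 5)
Your proposal is correct. Note that the paper does not actually prove this lemma at all --- it simply cites Moerdijk--Reyes \cite[p.~23]{MR} --- so what you have written is a genuine self-contained argument, and it is essentially the standard construction from that reference (formally adjoining inverses), recast as a pushout. The key analytic input is exactly where you put it: the identification $\cin(\R\setminus\{0\})\cong\cin(\R^2)/(ts-1)$ together with the freeness of $\cin(\R^2)$, which shows that $\gamma_0\colon\cin(\R)\to\cin(\R\setminus\{0\})$ inverts $x$ universally; your divisibility argument (regular value, Hadamard in adapted coordinates near the hyperbola, trivial division off it, gluing by density of the complement) is sound, and the surjectivity of restriction to the closed submanifold $Z$ is the standard extension fact. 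The assembly via $F=\coprod_{a\in\Sigma}\cin(\R)$, $G=\coprod_{a\in\Sigma}\cin(\R\setminus\{0\})$ and the pushout of $G\leftarrow F\to\scA$ then gives (i) immediately, since ring maps preserve units. One small point you gloss over in (ii): the pushout's universal property gives a unique $\overline{\varphi}$ making \emph{both} triangles commute, whereas the lemma asks for uniqueness among maps satisfying only $\overline{\varphi}\circ\gamma=\varphi$. This is easily repaired: if $\overline{\varphi}'\circ\gamma=\varphi$, then restricting $\overline{\varphi}'\circ j$ (with $j\colon G\to\scA\{\Sigma\inv\}$ the other pushout leg) to each cofactor gives a map $\cin(\R\setminus\{0\})\to\scB$ sending $\gamma_0(x)$ to $\varphi(a)$, so by the one-variable uniqueness it must equal your $\beta_a$; hence the second leg is forced and pushout uniqueness applies. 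With that sentence added, the proof is complete; your parenthetical alternative (iterating one-element localizations and passing to the filtered colimit over finite subsets of $\Sigma$) is also a viable route.
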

\begin{proof} See  \cite[p.~23]{MR}.
\end{proof}
\begin{definition}
  We refer to the map 
$\gamma:\scA \to \scA\{\Sigma\inv\}$ of Lemma~\ref{lem:2.21} as  a {\sf localization} of
the $\cin$-ring $\scA$ at the set $\Sigma$.   
\end{definition}
\begin{remark} A localization of a $\cin$-ring $\scA$ at a set
  $\Sigma$ is unique up to a unique isomorphism, so we can speak about
  {\em the} localization of $\scA$ at $\Sigma$.
\end{remark}

\begin{notation}\label{not:2.22}
Let $\scA$ be a $\cin$-ring. By Lemma~\ref{lem:2.21}, for an
$\R$-point $x:\scA\to \R$ of a $\cin$-ring $\scA$ there exists a
localization of $\scA$ at the set
\[
\{x\not =0\}:= \{a\in \scA \mid x(a) \not = 0\}
\]
We set
\[
\scA_x:= \scA\{ \{x\not =0\}\inv\}.
\] 
and denote the corresponding localization map by
\begin{equation} \label{eq:pix}
  \pi_x:\scA\to \scA_x
\end{equation}
\end{notation}
\noindent
Joyce proves \cite[Proposition~2.14]{Joy} that $\pi_x:\scA \to \scA_x$ is surjective
with $I_x:= \ker \pi_x$ given by
\begin{equation} \label{eq:5.13}
I_x:= \{ a \in \scA \mid \textrm{ there is } d\in \scA \textrm{ so
  that } x(d)\not =0 \textrm{ and } ad= 0\}.
\end{equation}
We think of $\scA_x$ as the ring of germs of elements of $\scA$ at the
point $x$.

\begin{remark} In case of $\scA = \cin(\R^n)$ and
  $x:\cin(\R^n) \to \R$ (which is the evaluation at a some point
  $p\in \R^n$ by Remark~\ref{rmrk:Milnors}) 
  the localization $(\cin(\R^n))_x$ is isomorphic to the usual ring of
  germs of functions at $p$.  This is because both rings are
  localizations of $\cin(\R^n)$ at the same set; see \cite[Example
  2.15]{Joy}.
\end{remark}
\begin{remark}
  The localizations $\scA_x$ are local rings.  This is easy to see.
  Note first that for any $a\in \{x\not =0\}$, $x(a)$ is invertible in
  $\R$, hence $x:\scA\to \R$ gives rise to $\overline{x}:\scA_x\to \R$
  with $ x = \overline{x}\circ \pi_x$.  Moreover, for any $c\in \scA$,
  $\pi_x(c) \not \in \ker{\overline{x}}$ if and only if
  $x(c) \not = 0$ if and only if $\pi_x(c)$ is invertible in
  $\scA_x$. Hence $\scA_x \smallsetminus \ker{\overline{x}}$ consists
  of units of $\scA_x$ and therefore $\ker{\overline{x}}$ is a unique
  maximal ideal in $\scA_x$.
\end{remark}

\begin{construction} \label{construction:1}  Given a $\cin$-ring
$\scA$ we construct the corresponding affine $\cin$-scheme
$\Spec(\scA)$.  In Construction~\ref{constr:Z} we defined  a
topology on the set $X_\scA$ of $\R$-points of $\scA$.  We now
construct the structure sheaf $\scO_\scA$ on $X_\scA$.

We start by defining  a candidate etale space $S_\scA$ of the
sheaf $\scO_\scA$:
\[
S_\scA:= \bigsqcup _{x\in X_\scA} \scA_x \equiv \bigsqcup _{x\in X_\scA} \scA/I_x.
\]
The set $S_\scA$ comes with the evident surjective map $\varpi:
S_\scA\to X_\scA$ defined by $\varpi( s) = x$ for all $s\in
\scA_x\hookrightarrow S_\scA$. For any $a\in \scA$ we get a section $\frs_a: X_\scA
\to S_\scA$ of $\varpi$:
\begin{equation} \label{eq:2.6}
\frs_a(x) = \pi_x (a) \equiv a_x,
\end{equation}
where, as before, $\pi_x:\scA \to \scA_x$ is the localization map
\eqref{eq:pix}.  The collection of sets
\[
\{ \frs_a (U)\mid a\in \scA, U\in \Open(X_\scA) \}
\]  
forms a basis for a topology on $S_\scA$.  In this topology the
projection $\varpi: S_\scA\to X_\scA$ is a local homeomorphism and all
sections $\frs_a:X_\scA \to S_\scA$ are continuous.

{\em We
define the structure sheaf $\scO_\scA$ of $\Spec(\scA)$ to be the sheaf
of continuous sections of  $\varpi: S_\scA \to X_\scA$.}     Equivalently
\begin{equation} \label{eq:2.4}
\begin{split}  
\scO_\scA (U) =   &\{ s:U\to \bigsqcup _{x\in U} \scA_x\mid \textrm{ there
  is an open cover } \{U_\alpha\}_{\alpha \in A} \textrm{ of } U\\ 
  &
\textrm{ and } \{a_\alpha\}_{\alpha\in A} \subset \scA \textrm{ so that }
s|_{U_\alpha} = \frs_{a_\alpha}|_{U_\alpha} \textrm{ for all } \alpha \in A\}.
\end{split}
\end{equation}
for every open subset $U$ of $X_\scA$.  The $\cin$-ring structure on
the sets $\scO_\scA(U)$ is defined pointwise.

It follows from \eqref{eq:2.4} that the sheaf $\scO_\scA$ is a
sheafification of a presheaf $\cP_\scA$ defined by
\begin{equation}\label{eq:sec}
  \cP_\scA(U) := \{\frs_a|_U \mid a\in \scA\}.
\end{equation}

It turns out that the stalk of the sheaf $\scO_\scA$ at a point
$x\in X_\scA$ is (isomorphic to) $\scA_x$ (\!\cite[Lemma~4.18]{Joy}).  Consequently the pair
$(X_\scA, \scO_\scA)$ is a {\em local} $\cin$-ringed space. 
\end{construction}

\begin{remark} \label{rmrk:2.26}
Note that the canonical map $\eta: \cP_\scA\to \scO_\scA$ from a presheaf to its
sheafification is simply the inclusion: $\eta(\frs_a) = \frs_a$.

Note also that for any open subset $U$ of $X_\scA$ the map
\[
\scA \to \cP(U), \qquad a\mapsto \frs_a|_U
\]
is a surjective map of $\cin$-rings with the kernel $\bigcap_{x\in
  U}I_x$.  Thus
\[
\cP(U) \simeq \scA/\bigcap_{x\in U}I_x.
\]  
\end{remark}  
\begin{remark} \label{rmrk:2.27}
In the case where the $\cin$-ring $\scA$ in
Construction~\ref{construction:1} is the ring $\cin(M)$ of smooth
functions on a manifold $M$, the space $X_\scA$ of points of $\scA$  is canonically
homeomorphic to the topological space $M$ underlying the manifold $M$
--- cf.\ Lemma~\ref{lem:2.19}.  More generally, as a local
$\cin$-ringed space $\Spec(\cin(M))$ is isomorphic to $(M, \cin_M)$
where $\cin_M$ denotes the sheaf of smooth functions on the manifold
$M$.   Moreover there is a unique  isomorphisms
$\uu{\varphi}_M: (M, \cin_M) \to \Spec(\cin(M))$ that makes the diagram
\[
\xy
(-10,10)*+{\cin(M)}="1";
(14,10)*+{\Gamma(\cin_M)}="2";
(14, -10)*+{\Gamma(\scO_{\cin(M)})}="3";
{\ar@{->} ^{\id} "1";"2"};
{\ar@{->}_{\varepsilon_{\cin(M)}} "1";"3"};
{\ar@{<-}_{\Gamma{(\uu{\varphi})}} "3";"2"};
\endxy
\]
commute.   From now on we suppress the isomorphism
$\uu{\varphi}_M: (M, \cin_M) \to \Spec(\cin(M))$ for all manifolds
$M$.  That is, in effect, we set
\[
\Spec(\cin(M)) = (M, \cin_M)
\]  
for all manifolds $M$.
\end{remark}
\begin{remark} \label{rmrk:corners}
Similarly, if $M$ is a manifold with corners then $\Spec(\cin(M))$ is
(isomorphic to) $(M, \cin_M)$, where as above $\cin_M$ is the sheaf of
smooth functions on $M$.
\end{remark}  

\begin{remark} \label{rmrk:Dubuc-fg}
  
In \cite{Dubuc} on p.~687 in the paragraph right below Proposition~12 
Dubuc describes the affine scheme $\Spec(\scA)$ for a finitely
generated $\cin$-ring $\scA$ as follows. Since $\scA$ is finitely
generated there is a surjective map of $\cin$-rings $\Pi: \cin(\R^n)
\to \scA$.  Let $J$ denote the kernel of $\Pi$ and let
\begin{equation} \label{eq:A.vanish}
Z = Z_J:= \{p\in \R^n \mid f(p) = 0 \textrm{ for all } f\in J\}
\end{equation}
denote the zero set of the ideal $J$.   For each open set $U\subset
\R^n$ we then have the quotient $\cin$-ring
\[
\cP(U) = \cin(U)/J_U
\]  
where $J_U \subset \cin(U)$ is the ideal generated by the set $J|_U =\{f|_U\mid f\in
J\}$.  Note that if $U\cap Z_J= \varnothing$, then $\cP(U) = 0$.  Let
$\cE$ be the sheafification of $\cP$.  The support of $\cE$ is $Z_J$,
hence we can view $\cE$ as a sheaf on $Z$.   Dubuc then tells us that  $\Spec(\scA) \simeq
(Z, \cE)$.
\end{remark}

\begin{remark} \label{rmrk:2.28}
It follows from Construction~\ref{construction:1}  that for any
$\cin$-ring $\scA$ we have a map
\[
\Psi_\scA: \scA \to \Gamma(\Spec(\scA)), \qquad a \mapsto \frs_a.
\]  
Joyce proves \cite[p.\ 28]{Joy} that the maps $\{\Psi_\scA\}_{\scA\in
  \cring}$ assemble into a natural transformation $\Psi: \Gamma \circ
\Spec \Rightarrow \id_{\op{\cring}}$ and that this natural
transformation is the counit $\varepsilon$ of the adjunction $\Gamma
\vdash \Spec$.  Thus for a $\cin$-ring $\scA$ the component of the
adjunction $\varepsilon_\scA: \scA \to \Gamma (\Spec (\scA)) $ is
given by 
\begin{equation}
\varepsilon_\scA (a) = \frs_a
\end{equation}
for all $a\in \scA$, where $\frs_a : X_\scA \to S_\scA$ is defined by \eqref{eq:2.6}.
\end{remark}

Next we define the functor $\Spec$ on morphisms.  To do this we need a lemma.

\begin{lemma} \label{lem:2.25}
Let $\varphi: \scA \to \scB$ be a map of $\cin$-rings and $x:\scB\to
\R$ a point.  Then $\varphi$ induces a unique map $\varphi_x:
\scA_{x\circ \varphi} \to \scB_x$ of $\cin$-rings on their respective  localizations making the diagram
\begin{equation} \label{eq:2.4n}
  \xy
(-10,6)*+{\scA }="1";
(14,6)*+{\scB}="2";
(-10, -6)*+{\scA_{x\circ \varphi}}="3";
 (14, -6)*+{\scB_x}="4";
{\ar@{->}_{\pi_{x\circ \varphi}} "1";"3"};
{\ar@{->}^{\pi_x} "2";"4"};
{\ar@{->}^{\varphi} "1";"2"};
{\ar@{->}_{\varphi_x} "3";"4"};
\endxy
\end{equation}
commute.
\end{lemma}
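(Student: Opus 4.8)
The plan is to build $\varphi_x$ via the universal property of the localization $\pi_{x\circ\varphi}\colon \scA \to \scA_{x\circ\varphi}$ stated in Lemma~\ref{lem:2.21}. Set $q := x\circ\varphi\colon \scA \to \R$, so $\scA_{x\circ\varphi} = \scA\{\{q\neq 0\}^{-1}\}$ is the localization of $\scA$ at the set $\Sigma := \{a\in\scA \mid q(a)\neq 0\} = \{a\in\scA \mid x(\varphi(a))\neq 0\}$. Consider the composite $\pi_x\circ\varphi\colon \scA \to \scB_x$. To invoke the universal property I must check that $(\pi_x\circ\varphi)(a)$ is invertible in $\scB_x$ for every $a\in\Sigma$. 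But if $a\in\Sigma$ then $x(\varphi(a))\neq 0$, i.e.\ $\varphi(a)\in\{x\neq 0\}$, and by construction of $\scB_x = \scB\{\{x\neq 0\}^{-1}\}$ the element $\pi_x(\varphi(a))$ is invertible. Hence Lemma~\ref{lem:2.21}(2) applied to $\pi_x\circ\varphi\colon \scA \to \scB_x$ produces a unique map of $\cin$-rings $\varphi_x\colon \scA_{x\circ\varphi} \to \scB_x$ with $\varphi_x\circ\pi_{x\circ\varphi} = \pi_x\circ\varphi$, which is exactly the commutativity of the square \eqref{eq:2.4n}.

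For uniqueness, suppose $\psi\colon \scA_{x\circ\varphi}\to\scB_x$ is any map of $\cin$-rings making \eqref{eq:2.4n} commute, i.e.\ $\psi\circ\pi_{x\circ\varphi} = \pi_x\circ\varphi$. Then $\psi$ is a map out of $\scA_{x\circ\varphi}$ whose precomposition with the localization map $\pi_{x\circ\varphi}$ equals $\pi_x\circ\varphi$; by the uniqueness clause in the universal property of Lemma~\ref{lem:2.21}(2) (which asserts $\overline{\varphi}$ is the \emph{unique} factorization of a map that inverts $\Sigma$), we get $\psi = \varphi_x$. Note this uses nothing beyond Lemma~\ref{lem:2.21}; in particular we do not need surjectivity of $\pi_{x\circ\varphi}$, though since Joyce's result cited after Notation~\ref{not:2.22} tells us $\pi_{x\circ\varphi}$ is in fact surjective, one could alternatively argue uniqueness directly from that.

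I do not expect any serious obstacle: the entire content is the bookkeeping identity $\{a : x(\varphi(a))\neq 0\} = \varphi^{-1}(\{x\neq 0\})$, which guarantees $\varphi$ carries the denominators being inverted on the source into units on the target, so the universal property applies cleanly. The only point requiring a moment's care is making sure the set $\Sigma$ appearing in the definition of $\scA_{x\circ\varphi}$ via Notation~\ref{not:2.22} is literally $\{q\neq 0\}$ with $q = x\circ\varphi$ — which it is by definition — and that $\varphi$ maps $\Sigma$ into $\{x\neq 0\}\subset\scB$; everything else is a formal consequence of Lemma~\ref{lem:2.21}.
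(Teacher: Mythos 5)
Your proposal is correct and follows essentially the same route as the paper: both reduce the claim to the observation that $(x\circ\varphi)(a)\neq 0 \Leftrightarrow x(\varphi(a))\neq 0$, so $\varphi$ carries $\{x\circ\varphi\neq 0\}$ into $\{x\neq 0\}$, and then invoke the universal property of the localization from Lemma~\ref{lem:2.21} to produce and uniquely characterize $\varphi_x$. Your write-up just makes explicit the factorization and uniqueness steps that the paper leaves implicit.
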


\begin{proof}
Since $\pi_y: \scA \to \scA_y$ is a localization of the $\cin$-ring
$\scA$ at the set $\{y\not =0\} := \{a\in \scA \mid y(a)\not =0\}$ and
$\pi_x:\scB \to \scB_x$ is the localization of $\scB$ at $\{x\not
=0\}$, it is enough to show that
\begin{equation} \label{eq:2.3}
  \varphi(\{x\circ \varphi\not =0\}) \subseteq
  \{x\not = 0\}.
\end{equation}
Since
\[
(x \circ \varphi)(a) \not = 0 \quad \Leftrightarrow \quad x ( \varphi(a)) \not =0,
\]
\eqref{eq:2.3} holds and we are done.
\end{proof}

\begin{construction}[Construction of $\Spec$ on morphisms] \label{construction:2}
Let $\varphi:\scA \to \scB$ be a map of $\cin$-rings.   Define a map
of sets $f(\varphi):X_\scB \to X_\scA$ ( where as before
$X_\scA= \Hom (\scA,\R)$ and $X_\scB= \Hom(\scB, \R)$) by
\[
f(\varphi)(x):= x\circ \varphi.
\]
Recall that $\{ V_a = \{ y\in X_\scA \mid y(a) \not =0\}\}_{a\in
  \scA}$ is a basis for the topology on $X_\scA$ and similarly $\{ U_b
= \{ x\in X_\scB \mid x(b) \not =0\}\}_{b\in   \scB}$ is a basis for
the topology on $X_\scB$.  Since
\[
(f(\varphi))\inv (V_a) = \{ x \in X_\scB\mid f(\varphi) x \in V_a\} =
\{ x \in X_\scB \mid x (\varphi(a)) \not =0\} = U _{\varphi(a)},
\]
the map $f(\varphi)$ is continuous with respect to the Zariski
topologies on $X_\scB$ and $X_\scA$.  It remains to construct a map of
sheaves
\[
f(\varphi)_\#: \scO_\scA \to f(\varphi)_* \scO_\scB.
\]
Fix an open set $U\subseteq X_\scA$.  We construct a map 
\[
f(\varphi)_{\#, U}: \cP_\scA (U)\to \scO_\scB (f(\varphi)\inv U).
\]
of $\cin$-rings as follows.  Recall that
\[
  \cP_\scA (U) = \{ \frs_a: U\to \bigsqcup _{x\in U} \scA_x\mid a\in
  \scA\}
\]
where, as in \eqref{eq:2.6}, $\frs_a(y) = a_y$ for all $y\in U$.  Here
and below to reduce the clutter we write $\frs_a$ when we mean $\frs_a|_U$.
Given $\frs_a \in \cP_\scA (U)$ consider
\[
\ti{\frs}_a : f(\varphi)\inv (U) \to \bigsqcup _{x\in f(\varphi)\inv
  (U)} \scB_x, \qquad \ti{\frs}_a (x)  := \varphi_x (\frs_a (f(\varphi)x)),
\]
where $\varphi_x: \scA_{f(\varphi)x} \to \scB_x$ is the map from
Lemma~\ref{lem:2.25}.   Note that 
\[
\varphi_x (\frs_a (f(\varphi)x)) = \varphi_x (\pi_{f(\varphi) x} a)
= \pi_x (\varphi(a)),
\]  
where the last equality is commutativity of \eqref{eq:2.4n}.   Hence
$\ti{\frs}_a  = \frs_{\varphi(a)} \in \cP_\scB(f(\varphi)\inv (U))
\subset \scO_\scB(f(\varphi)\inv (U))$.  We therefore define
\[
  f(\varphi)_{\#, U}: \cP_\scA (U)\to \scO_\scB (f(\varphi)\inv U)
\]
by
\[
f(\varphi)_{\#, U} (\frs_a): = \frs_{\varphi(a)}
\]  
for all $a\in \scA$.  It is easy to see that the family of maps
$\{f(\varphi)_{\#, U}\}_{U\subset X_\scA}$ is a map of presheaves
$f(\varphi)_\#: \cP_\scA \to f(\varphi)_* \scO_\scB$.  By the
universal property of sheafification we get a map of sheaves
$f(\varphi)_\#: \scO_\scA \to f(\varphi)_* \scO_\scB$.

We define
\[
  \Spec(\varphi) \equiv
  \uu{f(\varphi)}: = (f(\varphi), f(\varphi)_\#).
\]  
\end{construction}

\section{$\cin$-derivations and $\Spec$} \label{sec:3}

The goal of this section is to prove a number of technical results that are 
then used in Section~\ref{sec:int} to prove Theorem~\ref{thm:4.1}.
Recall that the  set $\CDer(\scA)$  of $\cin$-derivations of a
$\cin$-ring $\scA$ (Definition~\ref{def:der2} and Notation~\ref{not:der})
is an $\scA$-module: the action of $\scA$ on on $\CDer(\scA)$ is given
by 
\[
(av ) (b): = a (v(b))
\]
for all $a\in \scA$, $v\in \CDer(\scA)$ and $b\in \scA$.

Similarly the set $\cX(X,\scO_X)$ of vector fields on a local
$\cin$-ringed space $(X, \scO_X)$
(Notation~\ref{notation:presheaf_maps}, Definition~\ref{def:vf},
Notation~\ref{not:vf}) is a $\Gamma (\scO_X)$-module: given
$v\in \cX(X,\scO_X)$, $a\in \Gamma (\scO_X)$ we define
$av: \scO_X\to \scO_X$ by
\[
(av)_U (s) := a|_U v_U(s)
\]  
for all open sets $U\subset X$ and all sections $s\in \scO_X(U)$.

If
$(X, \scO_X)$ is the affine scheme  $\Spec(\scA)$ 
then the counit of adjunction $\varepsilon_\scA:\scA\to \Gamma
(\Spec (\scA))$ makes the set $\cX(\Spec(\scA))$ into an
$\scA$-module.
We need to understand the relation between $\cin$-derivations on
$\cin$-rings and vector fields on affine $\cin$-schemes.\\

\begin{lemma} \label{lem:2.37}
Let $v:\scA\to \scA$ be a $\cin$-derivation of a $\cin$-ring
$\scA$ and $I\subseteq \scA$ an ideal which is preserved by $v$: $v(I)\subset I$. Then the
induced map
\[
\overline{v}:\scA/I\to \scA/I, \qquad \overline{v}(a+I): = v(a)+ I
\]  
is a $\cin$-ring derivation.
\end{lemma}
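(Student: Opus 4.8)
The plan is to verify directly that $\overline{v}$ satisfies the defining property of a $\cin$-derivation from the $\cin$-ring $\scA/I$ to itself, viewing $\scA/I$ as an $\scA/I$-module via the quotient map. Recall (Definition~\ref{def:der2}) that a $\cin$-derivation of a $\cin$-ring $\scB$ is an $\R$-linear map $w : \scB \to \scB$ satisfying the Leibniz-type rule for every smooth function $g \in \cin(\R^n)$ and every tuple $b_1, \dots, b_n \in \scB$, namely
\[
w\bigl(\Phi_g(b_1, \dots, b_n)\bigr) = \sum_{i=1}^n \Phi_{\partial_i g}(b_1, \dots, b_n)\, w(b_i),
\]
where $\Phi_g$ denotes the $\cin$-ring operation associated to $g$. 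So the first step is to record that $\overline{v}$ is well defined: since $v(I) \subseteq I$, if $a + I = a' + I$ then $a - a' \in I$, hence $v(a) - v(a') = v(a - a') \in I$, so $v(a) + I = v(a') + I$. Well-definedness also makes $\overline{v}$ automatically $\R$-linear, because $v$ is and the quotient map $\Pi : \scA \to \scA/I$ is an $\R$-algebra map; concretely $\overline{v} \circ \Pi = \Pi \circ v$.

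The second and main step is the Leibniz identity. Let $g \in \cin(\R^n)$ and $a_1, \dots, a_n \in \scA$; write $\bar a_i = a_i + I$. Because $\Pi$ is a morphism of $\cin$-rings, $\Phi_g^{\scA/I}(\bar a_1, \dots, \bar a_n) = \Pi\bigl(\Phi_g^{\scA}(a_1, \dots, a_n)\bigr)$, and likewise for $\partial_i g$. Applying $\overline{v} \circ \Pi = \Pi \circ v$ and the fact that $v$ is a $\cin$-derivation of $\scA$,
\[
\overline{v}\bigl(\Phi_g^{\scA/I}(\bar a_1, \dots, \bar a_n)\bigr)
= \Pi\Bigl(v\bigl(\Phi_g^{\scA}(a_1, \dots, a_n)\bigr)\Bigr)
= \Pi\Bigl(\textstyle\sum_{i=1}^n \Phi_{\partial_i g}^{\scA}(a_1, \dots, a_n)\, v(a_i)\Bigr).
\]
Since $\Pi$ is a ring homomorphism it carries this sum of products to $\sum_{i=1}^n \Phi_{\partial_i g}^{\scA/I}(\bar a_1, \dots, \bar a_n)\, \overline{v}(\bar a_i)$, which is exactly the required right-hand side. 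Hence $\overline{v}$ is a $\cin$-derivation.

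The only thing that genuinely needs care — the part I would treat as the "main obstacle," though it is mild — is making sure the interplay between the two hypotheses is used correctly: $v(I) \subseteq I$ is what gives well-definedness of $\overline{v}$, while the $\cin$-derivation property of $v$ is what transfers to $\overline{v}$, and the bridge between them is simply that $\Pi : \scA \to \scA/I$ is a surjective morphism of $\cin$-rings intertwining $v$ and $\overline{v}$. Once the identity $\overline{v} \circ \Pi = \Pi \circ v$ is in hand, every clause (linearity, the smooth Leibniz rule) follows by pushing the corresponding clause for $v$ through $\Pi$ and using surjectivity to conclude it holds on all of $\scA/I$.
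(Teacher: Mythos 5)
Your proof is correct and follows essentially the same route as the paper: well-definedness comes from $v(I)\subseteq I$, and the Leibniz identity \eqref{eq:der} for $\overline{v}$ is obtained by pushing the corresponding identity for $v$ through the quotient map, exactly as in the paper's displayed computation (which your relation $\overline{v}\circ\Pi=\Pi\circ v$ encodes). Your explicit remark on well-definedness is a small addition the paper leaves implicit, but the argument is the same.
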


\begin{proof}
Given $n$, $f \in \cin(\R^n) $ and $a_1,\ldots, a_n \in \scA$ we
compute:
\[
\begin{split}
\overline{v} (f_{\scA/I} (a_1 +I,\ldots, a_n+I) =&\overline{v}( f_\scA(a_1,\ldots,a_n) +I)\\
=&v(f_\scA(a_1,\ldots,a_n)) +I\\
=& \left( \sum (\partial_i f)_\scA(a_1,\ldots, a_n) v(a_i) \right)+I\\
  =&  \sum( (\partial_i f)_\scA(a_1,\ldots, a_n) +I)( v(a_i) +I)\\
=&  \sum (\partial_i f)_{\scA/I} (a_1+I,\ldots, a_n+I) \overline{v}(a_i+I).
\end{split}
\]  
\end{proof}

\begin{example} \label{ex:3.2}
Let $N$ be a manifold, $U\subset N$ a nonempty open subset, $M$ the
closure of $U$ in $N$ and $\scA = \cin(N)/I_M$ where $I_M$ is the vanishing
ideal of $M$:
\[
I_M :=\{f\in \cin(N) \mid f(x) = 0\textrm{ for all } x\in M\}. 
\]
It is common to think of $\scA$ is a ring of smooth functions on the
space $M$ and write $\cin(M)$ for $\scA$.
Note that 
\[
I_M = \{f\in \cin(N)\mid f|_U =0\}
\]
since $U$ is dense in
$M$.  We claim that any vector field $v$ on $N$ (which is a
$\cin$-ring derivation of $\cin(N)$ preserves the ideal $I_M$.   This
is because if $f\in \cin(N)$ vanishes identically on the open set $U$
then so does $v(f)$.

Therefore any vector field $v$ on $N$ induces a $\cin$-ring derivation
$\overline{v}: \scA \to \scA$.
\end{example}

\begin{example}
Consider the ideal $I:= (x^2y) \subset \cin(\R^2)$ generated by the
function $x^2y$.   The zero set $Z$ of the ideal $I$  (cf.\
Definition~\ref{def:2.17}) is the union of the two coordinate axis.
Then
\[
(x\partial_x + y\partial_y) \, (x^2y) = 2x^2y + x^2 y \in I.
\]  
Hence $v= x\partial_x + y\partial_y$ preserves $I$ and induces a
derivation $\overline{v}: \scA \to \scA$ where $\scA = \cin(\R^2) /I$.

Note that $xy\not \in I$ but $(xy)^2 \in I$.  Hence $\scA$ is not a
ring of functions on the zero set $Z$ of $I$ (or on any set).
\end{example}

\begin{example} \label{ex:3.4}Let $v:\scA\to \scA$ be a $\cin$-ring derivation of a
$\cin$-ring $\scA$ preserving an ideal $I\subset \scA$.  Then the
square $I^2$ of $I$ is also preserved by $v$.  This is because for any
$a,b\in I$,
\[
v(ab) = v(a) b + av(b) \in I^2.
\]
\end{example}  
\begin{example}  Consider the standard (middle third) Cantor set
$C\subset \R$ and let $I $ be the vanishing ideal of $I$ in
$\cin(\R)$.  Recall that $C$ is perfect.  That is, for any $x_0\in C$ 
any open neighborhood $U$ of $x_0$ in $\R$ contains $x\in C$ with
$x\not = x_0$.   Then if $f \in \cin(\R)$ vanishes identically on $C$
then so does its derivative $f'$.  This is because if there is $x_0\in
C$ with $f'(x_0) \not = 0$ then there is an open neighborhood $U$ of
$x_0$ in $\R$ so that $x_0$ is the only zero of $f|_U$.  Since $C$ is
perfect this cannot happen.

It follows that the derivation $\frac{d}{dx}$ of $\cin(\R)$ preserves
the vanishing ideal $I$ of the Cantor set hence induces a $\cin$-ring
derivation on $\scA = \cin (\R)/I =: \cin(C)$.

Note that for any function $f\in \cin(\R)$, $f\frac{d}{dx}$ also
preserves the vanishing ideal $I$ and induces a derivation on
$\cin(C)$.

Note also that by Example~\ref{ex:3.4} any vector field
$f\frac{d}{dx}$ preserves the square $I^2$ of the vanishing ideal $I$
of the Cantor set, hence induces a $\cin$-derivation on $\cin(\R)/I^2$.
\end{example}  

\begin{notation} \label{not:der_i}
Let $I$ be an ideal in a $\cin$-ring $\scA$.   Denote the set of all
$\cin$-ring derivations of $\scA$ that preserve $I$ by
$\CDer(\scA)_I$.  That is
\[
\CDer(\scA)_I := \{v\in \CDer(\scA)\mid v(I) \subset I\}.
\]  
\end{notation}

\begin{remark}  In light of Notation~\ref{not:der_i},
  Lemma~\ref{lem:2.37} says that given an ideal $I$ in a $\cin$-ring
  $\scA$ we have a map
\[
\overline{\phantom{x}}: \CDer(\scA)_I \to \CDer(\scA/I), \quad v\mapsto \overline{v}.
\]    
It is easy to see that the map $\overline{\phantom{x}}$ is $\R$-linear
and sends commutators of derivations to commutators.  That is, it's a
map of real Lie algebras.

If $\scA = \cin(\R^n)$ the map $\overline{ \phantom{x}}$ is surjective;
see the next lemma.
\end{remark}

\begin{lemma} \label{lem:4.2} Let $I\subset \cin(\R^n)$ be an ideal,
  $\scA: = \cin(\R^n)/I$ the quotient $\cin$-ring and $\Pi:
  \cin(\R^n)\to \scA$ the quotient map.   Then for any derivation
  $\hat{v}\in \CDer(\scA)$ there is a derivation $V\in \cin(\R^n)$ so
  that
  \[
\hat{v} (f+I) = V(f)+I
\]
for all $f\in \cin(\R^n)$.   That is, $\overline{\phantom{x}}:
\CDer(\cin(\R^n))_I \to \CDer(\cin(\R^n)/I)$ is surjective.  
\end{lemma}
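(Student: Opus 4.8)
The plan is to build the lifted derivation $V$ on $\cin(\R^n)$ by specifying what it should do to the coordinate functions $x_1,\ldots,x_n$ and then invoking the universal property that a $\cin$-derivation of $\cin(\R^n)$ (valued in $\cin(\R^n)$ as a module over itself) is determined freely by its values on the generators $x_1,\ldots,x_n$. Concretely: let $\Pi:\cin(\R^n)\to\scA$ be the quotient map and write $\bar x_i := \Pi(x_i)$. Given $\hat v\in\CDer(\scA)$, the elements $\hat v(\bar x_i)\in\scA$ are of the form $\Pi(g_i)$ for some $g_i\in\cin(\R^n)$ (surjectivity of $\Pi$); pick such lifts $g_i$. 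First I would define $V:\cin(\R^n)\to\cin(\R^n)$ to be the unique $\cin$-derivation with $V(x_i)=g_i$ for all $i$, i.e.\ $V=\sum_{i=1}^n g_i\,\partial_i$. This is a genuine $\cin$-derivation of $\cin(\R^n)$ by the standard fact that $\CDer(\cin(\R^n))$ is free of rank $n$ over $\cin(\R^n)$ on $\partial_1,\ldots,\partial_n$ (equivalently, vector fields on $\R^n$ are exactly $\sum g_i\partial_i$).

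Next I would check that $V$ is compatible with $\hat v$ in the sense required, namely $\hat v\circ\Pi=\Pi\circ V$. Both sides are $\R$-linear maps $\cin(\R^n)\to\scA$, and I claim both are $\Pi$-related $\cin$-derivations in the sense of Definition~\ref{def:related-der} — more precisely, both $\hat v\circ\Pi$ and $\Pi\circ V$ are $\cin$-derivations of $\cin(\R^n)$ valued in the $\cin(\R^n)$-module $\scA$ (where $\cin(\R^n)$ acts on $\scA$ through $\Pi$). For $\Pi\circ V$ this is immediate since $V$ is a derivation and $\Pi$ is a $\cin$-ring map. For $\hat v\circ\Pi$: for $f\in\cin(\R^m)$ and $h_1,\ldots,h_m\in\cin(\R^n)$,
\[
(\hat v\circ\Pi)\bigl(f_{\cin(\R^n)}(h_1,\ldots,h_m)\bigr)
= \hat v\bigl(f_\scA(\Pi h_1,\ldots,\Pi h_m)\bigr)
= \sum_j (\partial_j f)_\scA(\Pi h_1,\ldots,\Pi h_m)\,\hat v(\Pi h_j),
\]
which is exactly the Leibniz/chain rule expressing that $\hat v\circ\Pi$ is a $\cin$-derivation into the module $\scA$. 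Now a $\cin$-derivation $\cin(\R^n)\to N$ into any $\cin(\R^n)$-module $N$ is uniquely determined by its values on $x_1,\ldots,x_n$ (again the freeness/universal property of Kähler-style $\cin$-differentials for $\cin(\R^n)$). Since $(\hat v\circ\Pi)(x_i)=\hat v(\bar x_i)=\Pi(g_i)=\Pi(V(x_i))=(\Pi\circ V)(x_i)$ for every $i$, the two derivations agree on generators, hence $\hat v\circ\Pi=\Pi\circ V$ on all of $\cin(\R^n)$. Unwinding, this says $\hat v(f+I)=V(f)+I$ for all $f$, which is the assertion; and in particular $V(I)\subset I$, so $V\in\CDer(\cin(\R^n))_I$ and $\overline{V}=\hat v$, giving surjectivity of $\overline{\phantom{x}}$.

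The main obstacle — really the only nontrivial input — is the freeness statement: that a $\cin$-derivation of $\cin(\R^n)$ with values in a module is uniquely and freely determined by its values on the coordinate functions, so that (a) $\sum g_i\partial_i$ is well-defined as a $\cin$-derivation and (b) two $\cin$-derivations agreeing on $x_1,\ldots,x_n$ coincide. This is the $\cin$-analogue of the usual universal property of Kähler differentials and relies on Hadamard's lemma (smooth functions admit first-order Taylor expansions with smooth remainder terms), which is presumably recorded in the appendix among the basic facts about $\cin$-rings and $\cin$-derivations; I would cite it there. Everything else is formal bookkeeping with the module structure on $\scA$ pulled back along $\Pi$.
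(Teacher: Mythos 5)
Your proposal is correct and follows essentially the same route as the paper: lift $\hat v(\Pi(x_i))$ to $g_i$, set $V=\sum_i g_i\,\partial_i$, check agreement of $\hat v\circ\Pi$ and $\Pi\circ V$ on the coordinate functions, and conclude equality from the fact that $x_1,\ldots,x_n$ freely generate $\cin(\R^n)$. Your extra framing (both composites are $\cin$-derivations into $\scA$ viewed as a $\cin(\R^n)$-module) just makes the paper's terse final step explicit; note that with Definition~\ref{def:der2} the chain-rule identity is built into the definition of a $\cin$-derivation, so no separate appeal to Hadamard's lemma is needed at that point.
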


\begin{proof} 
Let $x_1,\ldots, x_n :\R^n
\to \R$ denote the standard coordinate functions. 
 Since $\Pi $ is
onto, for any $i$, $1\leq i\leq n$,
\begin{equation}
\hat{v}(\Pi(x_i)) = \Pi(a_i) 
\end{equation}
for some $a_i\in \cin(\R^n)$.  Let
\begin{equation} \label{eq:3.2}
V : = \sum_i a_i \frac{\partial}{\partial x_i}.
\end{equation}
Then $V$ is a $\cin$-ring derivation of $\cin(\R^n)$. Since the
coordinate functions  $x_1,\ldots,
x_n\in \cin(\R^n)$ generated $\cin(\R^n)$ as a $\cin$-ring
(cf.\ Remark~\ref{rmrk:free})
and 
since $\Pi$ is onto, $\Pi(x_1), \ldots,
\Pi(x_n)$ generate the $\cin$-ring $\scA$ (cf.\ Definition~\ref{def:fg}).
Observe that
\[
\Pi (V(x_i)) = \Pi (\sum_j a_j \frac{\partial}{\partial x_j} x_i)
=\Pi(a_i) = \hat{v}(\Pi (x_i))
\]
for all $i$.  
Since $V$ and $\hat{v}$ are $\cin$-ring derivations
(Definition~\ref{def:der2}) and since $x_1,\ldots,
x_n\in \cin(\R^n)$ freely generated $\cin(\R^n)$ it follows that
\[
\Pi (V(f)) = \hat{v}(\Pi (f))
\]
for all $f\in \cin(\R^n)$.
\end{proof}

\begin{lemma} \label{lem:2.18}
Let $x:\scA \to \R$ be an $\R$-point of a $\cin$-ring $\scA$, $\scA_x$
the localization of $\scA$ at the set $\{x\not= 0\}$ (see
Lemma~\ref{lem:2.21} and subsequent discussion).   A $\cin$-ring
derivation $v:\scA \to \scA$  preserves the ideal $I_x:= \ker(\scA
  \xrightarrow{ \pi_x} \scA_x)$ hence induces a derivation $v_x:\scA_x\to \scA_x$.
\end{lemma}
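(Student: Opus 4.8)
The plan is to exploit the explicit description of the kernel $I_x = \ker\pi_x$ recorded in \eqref{eq:5.13}: an element $a\in\scA$ lies in $I_x$ precisely when there exists $d\in\scA$ with $x(d)\neq 0$ and $ad=0$. So fix $a\in I_x$ together with such a witness $d$, and set out to show that $v(a)\in I_x$ as well. First I would apply the ordinary Leibniz rule for the $\cin$-derivation $v$ --- which follows from Definition~\ref{def:der2} by specializing the structure function to multiplication, exactly as in the computation in the proof of Lemma~\ref{lem:2.37} --- to the identity $ad=0$:
\[
0 = v(ad) = v(a)\,d + a\,v(d).
\]
Multiplying through by $d$ and invoking $ad=0$ once more yields
\[
v(a)\,d^2 = -\,a\,v(d)\,d = -\,v(d)\,(ad) = 0 .
\]
Since $x$ is a map of $\cin$-rings we have $x(d^2) = x(d)^2 \neq 0$, so $d^2$ witnesses $v(a)\in I_x$ in the sense of \eqref{eq:5.13}. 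As $a\in I_x$ was arbitrary, this proves $v(I_x)\subseteq I_x$.

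Once $v$ is known to preserve $I_x$, the induced derivation is immediate from Lemma~\ref{lem:2.37}: that lemma produces a $\cin$-ring derivation $\overline v\colon \scA/I_x \to \scA/I_x$ with $\overline v(a+I_x) = v(a)+I_x$. Because $\pi_x\colon\scA\to\scA_x$ is surjective with kernel $I_x$ (\cite[Proposition~2.14]{Joy}), it identifies $\scA_x$ with the quotient $\scA/I_x$; transporting $\overline v$ along this identification gives the desired $\cin$-ring derivation $v_x\colon\scA_x\to\scA_x$, uniquely characterized by $v_x\circ\pi_x = \pi_x\circ v$.

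I do not expect any real obstacle. The one point that needs care is to work with the concrete presentation of $I_x$ from \eqref{eq:5.13} rather than the bare description ``$\ker\pi_x$'', since it is precisely the existence of the annihilator $d$ that drives the computation above; and one should note that it is the surjectivity of $\pi_x$ that makes the passage to a quotient legitimate, so that Lemma~\ref{lem:2.37} applies verbatim instead of having to argue via the universal property of the localization (where one would be tempted, but not entitled, to write $v_x(\pi_x a) = \pi_x(v a)$ without first checking well-definedness).
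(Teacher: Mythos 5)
Your proof is correct and follows essentially the same route as the paper: both arguments use the explicit description \eqref{eq:5.13} of $I_x$, apply the Leibniz rule to $ad=0$, exhibit an element not killed by $x$ that annihilates $v(a)$, and then conclude via Lemma~\ref{lem:2.37} together with the identification $\scA_x\simeq\scA/I_x$ coming from the surjectivity of $\pi_x$. Your version is marginally more streamlined --- you multiply the Leibniz identity by $d$ to produce the single witness $d^2$, whereas the paper observes $d\,v(a)\in I_x$ and composes two witnesses into $d'd$ --- but this is a cosmetic difference, not a different argument.
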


\begin{proof} Recall  that the  localization map
  $\pi_x:\scA \to \scA_x$ is surjective with the kernel
\[  
I_x: = \{a\in \scA \mid \textrm{ there is }d\in \scA \textrm{ so that }
  x(d)\not = 0\textrm{ and } ad=0\}.
\]  
We want to show: if $a\in I_x$ then $v(a) \in I_x$ as well (for then
we can define $v_x:\scA_x\to \scA_x$ by $v_x (b+I_x) := v(b)+ I_x$).
Since $a\in I_x$ there is $d\in \scA$ so that $x(d)\not = 0$ and
$ad=0$.   Since $v$ is a derivation
\[
0 = v(ad) = a v(d) + d v(a).
\]  
Since $I_x$ is an ideal, $a v(d) \in I_x$. Hence $d v(a) = -a v(d) \in
I_x$.  Therefore there is $d'\in \scA$ such that $x(d') \not = 0$ and
$0= d'(dv(a))  = (d'd)\, v(a)$.  Since $x$ is a homomorphism,  $0 \not =
x(d') x(d)  = x(d'd) $.  Therefore $v(a) \in I_x$.  Now apply Lemma~\ref{lem:2.37}.
\end{proof}

\begin{lemma}
Let $\scA$ be a $\cin$-ring and $\Spec(\scA)= (X_\scA, \scO_\scA)$ the
corresponding affine scheme.  The map
\begin{equation} \label{eq:vGamma}
\Gamma: \cX(\Spec(\scA)) \to \CDer(\Gamma(\Spec(\scA)) \equiv
\CDer(\scO_\scA(X_\scA)), \qquad \Gamma(v) := v_{X_\scA}
\end{equation}
is injective (Notation~\ref{notation:presheaf_maps}).
\end{lemma}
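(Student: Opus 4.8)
The plan is to show that a vector field $v$ on $\Spec(\scA)$ is determined by what it does on global sections, by exploiting the fact that $v$ commutes with restriction maps together with the description of $\scO_\scA$ as a sheaf of continuous sections of the étale space $S_\scA$.

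First I would set up notation: suppose $v, w \in \cX(\Spec(\scA))$ satisfy $v_{X_\scA} = w_{X_\scA}$, i.e. they agree as derivations on the ring of global sections $\scO_\scA(X_\scA) = \Gamma(\Spec(\scA))$. I must show $v_U = w_U$ for every open $U \subseteq X_\scA$. Since $v$ and $w$ are maps of sheaves, they commute with the restriction maps $\scO_\scA(X_\scA) \to \scO_\scA(U)$. The key point is that it suffices to check equality on a generating family of sections, and the presheaf $\cP_\scA$ with $\cP_\scA(U) = \{\frs_a|_U \mid a \in \scA\}$ provides such a family: by \eqref{eq:2.4} every section of $\scO_\scA(U)$ is locally of the form $\frs_a|_{U_\alpha}$, and a $\cin$-derivation is in particular $\R$-linear and local (it commutes with restriction), so two derivations that agree on all sections of the form $\frs_a|_U$ for all open $U$ must agree everywhere.

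So the heart of the argument is: for every open $U$ and every $a \in \scA$, $v_U(\frs_a|_U) = w_U(\frs_a|_U)$. Here I use naturality. Consider the section $\frs_a \in \scO_\scA(X_\scA)$; we have $\varepsilon_\scA(a) = \frs_a$ by Remark~\ref{rmrk:2.28}. By hypothesis $v_{X_\scA}(\frs_a) = w_{X_\scA}(\frs_a)$ in $\scO_\scA(X_\scA)$. Now apply the restriction map $\rho^{X_\scA}_U : \scO_\scA(X_\scA) \to \scO_\scA(U)$. Since $v$ is a map of sheaves, $\rho^{X_\scA}_U \circ v_{X_\scA} = v_U \circ \rho^{X_\scA}_U$, and similarly for $w$. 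Noting that $\rho^{X_\scA}_U(\frs_a) = \frs_a|_U$, we get
\[
v_U(\frs_a|_U) = \rho^{X_\scA}_U(v_{X_\scA}(\frs_a)) = \rho^{X_\scA}_U(w_{X_\scA}(\frs_a)) = w_U(\frs_a|_U).
\]

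Finally I would assemble these facts. Fix an open $U$ and $s \in \scO_\scA(U)$. By \eqref{eq:2.4} choose an open cover $\{U_\alpha\}$ of $U$ and elements $a_\alpha \in \scA$ with $s|_{U_\alpha} = \frs_{a_\alpha}|_{U_\alpha}$. For each $\alpha$, using that $v$ commutes with restriction to $U_\alpha$ and the displayed identity above (applied with $U_\alpha$ in place of $U$),
\[
(v_U(s))|_{U_\alpha} = v_{U_\alpha}(s|_{U_\alpha}) = v_{U_\alpha}(\frs_{a_\alpha}|_{U_\alpha}) = w_{U_\alpha}(\frs_{a_\alpha}|_{U_\alpha}) = w_{U_\alpha}(s|_{U_\alpha}) = (w_U(s))|_{U_\alpha}.
\]
Since $\scO_\scA$ is a sheaf and the $U_\alpha$ cover $U$, it follows that $v_U(s) = w_U(s)$. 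As $U$ and $s$ were arbitrary, $v = w$, proving injectivity of $\Gamma$. The only mild subtlety — which is really just bookkeeping — is making sure at each stage that $v$ being a "map of sheaves of real vector spaces whose components are $\cin$-derivations" does give the commutation with restriction maps; but that is built into Notation~\ref{notation:presheaf_maps} and Definition~\ref{def:vf}, so there is no serious obstacle here.
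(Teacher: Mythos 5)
Your proposal is correct and follows essentially the same route as the paper: both reduce the question to the sections $\frs_a|_U$ of the presheaf $\cP_\scA$ via the compatibility of $v$ with restriction maps, and then pass to all of $\scO_\scA$ using that $\scO_\scA$ is the sheafification of $\cP_\scA$ (you simply make this last step explicit by comparing on a cover and invoking the sheaf axiom, and work with two vector fields $v,w$ where the paper uses linearity and shows the kernel is zero).
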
  

\begin{proof}
It is clear that that the map $\Gamma$ in \eqref{eq:vGamma} is
linear.  If $\Gamma (v)= 0$, then 
$v_{X_\scA} (\frs_a)=0$ for any $a\in \scA$.   Since $v$ is a map of
presheaves of vector spaces, for any $U\in \Open(X_\scA)$,
\[
0 = v_{X_\scA} (\frs_a)|_U = v_U(\frs_a|_U).
\]
Therefore the restriction of $v$ to the presheaf $\cP_\scA \subseteq
\scO_\scA$ is zero.  Since $v$ is a map of presheaves and the
sheafification of $\cP_\scA$ is $\scO_\scA$ (both as a sheaf of
$\cin$-rings and as a sheaf of vector spaces), $v$ itself has to be 0.
\end{proof}

\begin{lemma} \label{lem:gamma-spec} \label{lem:3.4}
Let $\scA$ be a complete $\cin$-ring (so that the map
$\varepsilon_\scA:\scA \to \Gamma (\Spec(\scA))$ is an isomorphism).
Let $\Spec(\scA) = (X_\scA, \scO_\scA)$ denote the corresponding
affine scheme.     Then the map
\begin{equation} \label{eq:3.2+}
\bGamma: \cX (\Spec(\scA)) \to \CDer(\scA), \quad \bGamma(v) =
\varepsilon_\scA \inv \circ \Gamma (v) \circ \varepsilon_\scA
\end{equation}
(where $\Gamma(v)$ is given by \eqref{eq:vGamma})
is an isomorphism of real vector spaces, of real Lie algebras and
of $\scA$-modules.
\end{lemma}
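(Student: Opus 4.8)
The plan is to show that the map $\bGamma$ is a well-defined bijection, then check it preserves the three algebraic structures, largely by reducing everything to the already-established injectivity of $\Gamma\colon \cX(\Spec(\scA))\to\CDer(\scO_\scA(X_\scA))$ and the fact that $\varepsilon_\scA$ is a ring isomorphism (by completeness). First, the formula $\bGamma(v)=\varepsilon_\scA^{-1}\circ\Gamma(v)\circ\varepsilon_\scA$ makes sense: $\Gamma(v)=v_{X_\scA}$ is a $\cin$-derivation of $\Gamma(\Spec(\scA))$, and conjugating a $\cin$-derivation by a $\cin$-ring isomorphism $\varepsilon_\scA$ yields a $\cin$-derivation of $\scA$ (this is the standard transport-of-structure fact; it follows from the chain-rule identity defining $\cin$-derivations, since $\varepsilon_\scA$ commutes with all the $\cin$-operations $f_\scA$). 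So $\bGamma$ is a well-defined $\R$-linear map.

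Injectivity of $\bGamma$ is immediate from injectivity of $\Gamma$ (the previous lemma) together with invertibility of $\varepsilon_\scA$: if $\bGamma(v)=0$ then $\Gamma(v)=\varepsilon_\scA\circ 0\circ\varepsilon_\scA^{-1}=0$, hence $v=0$. The real content is surjectivity: given a $\cin$-derivation $w\colon\scA\to\scA$, I must produce a vector field $v$ on $\Spec(\scA)$ with $\bGamma(v)=w$, equivalently with $v_{X_\scA}=\varepsilon_\scA\circ w\circ\varepsilon_\scA^{-1}$. The construction of $v$ proceeds stalkwise: by Lemma~\ref{lem:2.18}, $w$ preserves each ideal $I_x=\ker(\pi_x\colon\scA\to\scA_x)$ and so induces a $\cin$-derivation $w_x\colon\scA_x\to\scA_x$ with $w_x\circ\pi_x=\pi_x\circ w$. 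Since the stalk $\scO_{\scA,x}$ is canonically $\scA_x$ (Construction~\ref{construction:1}, via \cite[Lemma~4.18]{Joy}), these $w_x$ are candidate stalk maps of the sought-after sheaf map $v$. To assemble them into a map of sheaves $v\colon\scO_\scA\to\scO_\scA$, I first define $v$ on the generating presheaf $\cP_\scA$: for $a\in\scA$ and open $U\subseteq X_\scA$ set $v_U(\frs_a|_U):=\frs_{w(a)}|_U$. One checks this is well-defined (if $\frs_a|_U=\frs_b|_U$, i.e.\ $a-b\in\bigcap_{x\in U}I_x$, then $w(a)-w(b)\in\bigcap_{x\in U}I_x$ since $w$ preserves each $I_x$, so $\frs_{w(a)}|_U=\frs_{w(b)}|_U$), that it is compatible with restriction, and that each $v_U$ is a $\cin$-derivation of $\cP_\scA(U)\cong\scA/\bigcap_{x\in U}I_x$ — this last point is exactly Lemma~\ref{lem:2.37} applied to the ideal $\bigcap_{x\in U}I_x$, which $w$ preserves. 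Thus $v$ is a map of presheaves of vector spaces that is a $\cin$-derivation levelwise; by the universal property of sheafification it extends uniquely to a sheaf map $v\colon\scO_\scA\to\scO_\scA$, and since sheafification of $\cP_\scA$ as a sheaf of $\cin$-rings is the same as its sheafification as a sheaf of vector spaces, the extension is still a $\cin$-derivation on each open set — i.e.\ $v\in\cX(\Spec(\scA))$. Finally $\Gamma(v)=v_{X_\scA}$ sends $\frs_a$ to $\frs_{w(a)}$, i.e.\ $v_{X_\scA}\circ\varepsilon_\scA=\varepsilon_\scA\circ w$, so $\bGamma(v)=w$, proving surjectivity.

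For the structure-preservation: $\R$-linearity of $\bGamma$ is clear from the conjugation formula and linearity of $\Gamma$. Compatibility with Lie brackets follows because conjugation by a fixed isomorphism is a Lie-algebra homomorphism and $\Gamma$ already respects brackets (the bracket of two $\cin$-derivations is computed pointwise, and $v\mapsto v_{X_\scA}$ is injective and bracket-compatible). For the $\scA$-module structures, recall $\scA$ acts on $\cX(\Spec(\scA))$ through $\varepsilon_\scA$ by $(a\cdot v)_U(s)=\varepsilon_\scA(a)|_U\,v_U(s)$, while it acts on $\CDer(\scA)$ by $(a\cdot w)(b)=a\,w(b)$; a direct check on the presheaf $\cP_\scA$ (where $\varepsilon_\scA(a)|_U=\frs_a|_U$ acts by multiplication stalkwise) shows $\bGamma(a\cdot v)=a\cdot\bGamma(v)$, and this extends to all of $\scO_\scA$ by sheafification/injectivity of $\Gamma$.

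The main obstacle I anticipate is the surjectivity argument — specifically, verifying cleanly that the stalkwise-defined derivations $w_x$ glue to a genuine map of sheaves. The cautious route above sidesteps subtleties by first building $v$ on $\cP_\scA$ (where everything is controlled by the single element $w(a)\in\scA$) and only then sheafifying, rather than trying to verify continuity/compatibility of the etale-space maps $\bigsqcup_x w_x$ directly; the key enabling facts are Lemma~\ref{lem:2.18} (each $I_x$ is $w$-invariant) and Lemma~\ref{lem:2.37} (hence each quotient $\scA/\bigcap_{x\in U}I_x$ inherits a $\cin$-derivation), together with the identification of $\scO_\scA$ as the sheafification of $\cP_\scA$.
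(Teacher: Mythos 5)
Your proposal is correct and follows essentially the same route as the paper's proof: injectivity from the injectivity of $\Gamma$ plus invertibility of $\varepsilon_\scA$, and surjectivity by defining the inverse on the presheaf $\cP_\scA$ via $\frs_a|_U \mapsto \frs_{w(a)}|_U$ (justified by Lemma~\ref{lem:2.18} and Lemma~\ref{lem:2.37}) and then sheafifying, concluding from $\Gamma(v)\circ\varepsilon_\scA = \varepsilon_\scA\circ w$. Your extra detail on well-definedness and on the Lie-algebra and $\scA$-module compatibilities only fleshes out what the paper dismisses as following easily.
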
  

\begin{proof}
We prove that $\bGamma$ is a linear isomorphism.  The two other claims
follow easily from this fact.  It is also clear that $\bGamma$ is
$\R$-linear.
Since $\varepsilon_\scA$ is an isomorphism of $\cin$-rings and since
the $\Gamma$ is injective, the map $\bGamma$ is injective.

We prove that $\bGamma$ is surjective by constructing a linear map
\begin{equation} \label{eq:Spec}
\Spec: \CDer(\scA) \to \cX(\Spec(\scA))
\end{equation}  
with the property that $\bGamma (\Spec(w)) = w$ for all $w\in
\CDer(\scA)$.
The assumption that $\varepsilon_\scA:\scA \to \Gamma(\scO_\scA)$ is
an isomorphism implies, in particular, that every global section of
the structure sheaf $\scO_\scA$ is of the form $\frs_a$ for some $a\in
\scA$ (cf.\ Remark~\ref{rmrk:2.28}).

Now fix a derivation $w\in \CDer(\scA)$.  By Lemma~\ref{lem:2.18}
$w(I_x) \subset I_x$ for any point $x\in X_\scA$.   Hence for any $U\in
\Open(X_\scA)$ we have a well-defined $\cin$-ring derivation
\[
w_U: \scA/\bigcap_{x\in U}I_x \to  \scA/\bigcap_{x\in U}I_x , \qquad
w_U (a + \bigcap_{x\in U}I_x  ) = w(a) +\bigcap_{x\in U}I_x .
\]  
By Remark~\ref{rmrk:2.26} the map
\[
\scA/\bigcap_{x\in U}I_x  \to \cP_\scA(U), \qquad a + \bigcap_{x\in U}I_x 
\mapsto \frs_a|_U
\]  
is an isomorphism of $\cin$-rings.  It follows that
\[
v_U:\cP_\scA(U) \to \cP_\scA(U), \qquad v_U (\frs_a|_U) := \frs_{w(a)} |_U
\]  
is a well-defined $\cin$-ring derivation.    If $W, U\in
\Open(X_\scA)$ are two open sets with $W\subset U$ then
\[
 \left( v_U(\frs_a|_U)\right)|_W = \left(\frs_{w(a)}|_U\right) |_W =\frs_{w(a)}|_W
  = v_W(\frs_a|_W) =v_W((\frs_a|_U)|_W).
\]  
Hence the collection of derivations $\{v_U\}_{U\in \Open(X_\scA)}$ is
a map of presheaves of real vector spaces.  Since the structure sheaf
$\scO_\scA$ is a sheafification of $\cP_\scA$, there is a unique map
of sheaves
\[
 \Spec(w): \scO_\scA \to \scO_\scA
\]
so that for any open set $U\subset X_\scA$
\[
\Spec(w)_U (\frs_a|_U) = v_U(\frs_a|_U) =  s_{w(a)} |_U.
\]
In particular if $U= X_\scA$ then $\Spec(w)_{X_\scA} = \Gamma
(\Spec(w))$ satisfies
\begin{equation} \label{eq:3.4}
\Gamma (\Spec(w) )\frs_a  = \frs_{w(a)}.
\end{equation}  
Since $\varepsilon_\scA (a)  = \frs_a$ \eqref{eq:3.4} is equivalent to
\[
((\Gamma \circ \Spec) (w) )\circ \varepsilon_\scA = \varepsilon _\scA
\circ w.
\] 
Equivalently
\[
(\Gamma ( \Spec (w) )= \varepsilon_\scA \circ w \circ \varepsilon_\scA\inv .
\]  
Hence
\[
w = (\varepsilon_\scA\inv \circ \Gamma \circ \varepsilon_\scA)
(\Spec(w)) =\bGamma (\Spec(w)).
\]  
Consequently $\bGamma$ is surjective and we are done.  
\end{proof}

\begin{remark} \label{rmrk:3.5}
  It follows from the proof of Lemma~\ref{lem:gamma-spec} above that
  the map $\Spec: \CDer(\scA) \to \cX(\Spec(\scA))$ is the inverse of
  the map
  $\bGamma: \cX (\Spec(\scA)) \to \CDer(\scA)$.
\end{remark}  

\begin{remark} \label{rmrk:3.66}
Recall that given a $\cin$-manifold $M$ (with or without boundary)  we may view the local
$\cin$-ringed space $(M, \cin_M)$ as $\Spec(\cin(M))$
(Remark~\ref{rmrk:2.27}).  In differential geometry one usually views
a vector field $v$ on a manifold $M$ as either a ($\cin$-ring) derivation of
$\cin(M)$ or as a section of the tangent bundle $TM\to M$.  The latter
is equivalent to viewing $v$ as a map of sheaves of vector spaces:
\[
  v:\cin_M\to \cin_M
\]
so that for each $U\in
\Open(M)$ the map  $v_U:\cin(U) (\equiv \cin_M(U)) \to \cin(U)$ is a derivation.  In other words the map $\bGamma: \cX (\Spec(\cin(M)) \to
\CDer(\cin(M)$ and its inverse $\Spec$ are suppressed.   We will
adhere to  this tradition as far as manifolds are concerned.
\end{remark}  

\begin{lemma} \label{lem:3.7}
  Let $\scA$, $\scB$ be two complete $\cin$-rings 
  (so that the maps $\varepsilon_\scA:\scA\to \Gamma (\Spec(\scA))$, $\varepsilon_\scB:\scA\to \Gamma (\Spec(\scB)$ are isomorphisms and
  $\Spec: \Hom(\scB, \scA) \to \Hom (\Spec(\scA), \Spec(\scB))$ is a
  bijection).  
  
  For any map $\varphi:\scB\to \scA$ of $\cin$-rings
  and for any two derivations $w \in \CDer(\scB)$, $v\in \CDer(\scA)$,
  the derivations 
  $w$ and $v$ are $\varphi$-related if and only if the vector fields 
  $\Spec(v)   $ and $\Spec(w)$ (cf.\ \eqref{eq:Spec})
  are $\uu{f} := \Spec(\varphi)$-related.
\end{lemma}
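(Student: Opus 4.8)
The plan is to unwind both notions of relatedness into a single algebraic identity in $\Hom(\scB,\scA)$ and read off the equivalence. On the source side, by Definition~\ref{def:related-der} the $\cin$-derivations $w\in\CDer(\scB)$ and $v\in\CDer(\scA)$ are $\varphi$-related exactly when $\varphi\circ w=v\circ\varphi$ as maps $\scB\to\scA$. On the target side, applying Construction~\ref{construction:2} to $\varphi\colon\scB\to\scA$ gives $\uu f=\Spec(\varphi)=(f(\varphi),f(\varphi)_\#)\colon\Spec(\scA)\to\Spec(\scB)$, where $f(\varphi)\colon X_\scA\to X_\scB$ is $x\mapsto x\circ\varphi$ and $f(\varphi)_\#\colon\scO_\scB\to f(\varphi)_*\scO_\scA$ is the unique morphism of sheaves with $f(\varphi)_\#(\frs_b|_U)=\frs_{\varphi(b)}|_{f(\varphi)\inv(U)}$ for all $b\in\scB$, $U\in\Open(X_\scB)$. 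By Definition~\ref{def:1.4}, since $\Spec(v)\in\cX(\Spec(\scA))$ lives on the source of $\uu f$ and $\Spec(w)\in\cX(\Spec(\scB))$ on its target, the vector fields $\Spec(v)$ and $\Spec(w)$ are $\uu f$-related precisely when
\[
  f(\varphi)_\#\circ\Spec(w)\;=\;\bigl(f(\varphi)_*\Spec(v)\bigr)\circ f(\varphi)_\#
\]
as maps of sheaves $\scO_\scB\to f(\varphi)_*\scO_\scA$ on $X_\scB$.

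Next I would verify this square on the presheaf $\cP_\scB$, which suffices because $\scO_\scB$ is the sheafification of $\cP_\scB$, so a morphism of sheaves out of $\scO_\scB$ into another sheaf is pinned down by its restriction to $\cP_\scB$, i.e.\ by its values on the sections $\frs_b|_U$ with $b\in\scB$, $U\in\Open(X_\scB)$. Fix such $b$ and $U$ and set $V:=f(\varphi)\inv(U)\subseteq X_\scA$. Using $\Spec(w)_U(\frs_b|_U)=\frs_{w(b)}|_U$ (from the proof of Lemma~\ref{lem:gamma-spec}; completeness of $\scB$ is what makes the map \eqref{eq:Spec} available), followed by the above description of $f(\varphi)_\#$, the left-hand composite sends $\frs_b|_U$ to $\frs_{\varphi(w(b))}|_V$. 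On the right, $f(\varphi)_\#$ sends $\frs_b|_U$ to $\frs_{\varphi(b)}|_V$, and since $(f(\varphi)_*\Spec(v))_U=\Spec(v)_V$, applying $\Spec(v)_V$ yields $\frs_{v(\varphi(b))}|_V$. Hence on the generator $\frs_b|_U$ the square reduces to the identity $\frs_{\varphi(w(b))}|_V=\frs_{v(\varphi(b))}|_V$.

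Finally, ranging over all $U$ (equivalently, taking $U=X_\scB$ so that $V=X_\scA$, the general case following by restriction), the square commutes if and only if $\frs_{\varphi(w(b))}=\frs_{v(\varphi(b))}$ in $\Gamma(\Spec(\scA))$ for every $b\in\scB$. Since $\scA$ is complete, $\varepsilon_\scA\colon\scA\to\Gamma(\Spec(\scA))$, $a\mapsto\frs_a$, is an isomorphism, in particular injective, so this is equivalent to $\varphi(w(b))=v(\varphi(b))$ for all $b\in\scB$, i.e.\ to $\varphi\circ w=v\circ\varphi$, i.e.\ to $w$ and $v$ being $\varphi$-related. This establishes the claimed equivalence. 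I expect the only real friction to be notational bookkeeping: the contravariance of $\Spec$ (so that $\uu f$ runs from $\Spec(\scA)$ to $\Spec(\scB)$ and the roles of the two rings get swapped relative to Construction~\ref{construction:2}), tracking which structure sheaf is pushed forward in the square of Definition~\ref{def:1.4}, and the harmless but essential observation that a sheaf morphism out of $\scO_\scB$ is determined by its effect on the sections $\frs_b|_U$.
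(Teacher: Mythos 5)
Your proof is correct and is essentially the paper's argument: both reduce the sheaf-level square of Definition~\ref{def:1.4} to the single identity $\varphi\circ w=v\circ\varphi$ by evaluating on the sections $\frs_b$ and invoking completeness of $\scA$ (injectivity of $\varepsilon_\scA$). The only difference is presentational: where the paper passes to global sections and chases a cube whose faces commute by naturality of $\varepsilon$ and by the construction of $\Spec(w)$, $\Spec(v)$, you carry out the same commutativity facts as an explicit computation on the presheaf generators $\frs_b|_U$, which also makes explicit the paper's terse step that the sheaf square commutes iff the global-sections square does.
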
  

\begin{proof}
By Definition~\ref{def:1.4}  the vector fields $\Spec(v)$ and $\Spec(w)$ are $\uu{f}
= (f, f_\#)$-related if and only if the diagram
\begin{equation} \label{eq:3.5}
  \xy
(-10,6)*+{\scO_\scB }="1";
(14,6)*+{f_\# \scO_\scA}="2";
(-10, -6)*+{\scO_\scB}="3";
 (14, -6)*+{f_* \scO_\scA}="4";
{\ar@{->}_{\Spec(w)} "1";"3"};
{\ar@{->}^{f_*\Spec(v)} "2";"4"};
{\ar@{->}^{f_\#} "1";"2"};
{\ar@{->}_{f_\#} "3";"4"};
\endxy
\end{equation}
commutes.
Note that since the $\cin$-ring $\scA$ is complete, the map
\[
\Gamma: \cX(Spec(\scA)) \to \CDer(\Gamma(\Spec(\scA))
\]  
is an isomorphism.   Similarly the map $\Gamma: \cX(Spec(\scB)) \to
\CDer(\Gamma(\Spec(\scB))$ is an isomorphism.  Therefore
\eqref{eq:3.5} commutes if and only if
\begin{equation} \label{eq:3.6}
  \xy
(-14,10)*+{\Gamma(\scO_\scB) }="1";
(14,10)*+{\Gamma(\scO_\scA)}="2";
(-14, -6)*+{\Gamma(\scO_\scB)}="3";
 (14, -6)*+{\Gamma(\scO_\scA)}="4";
{\ar@{->}_{\Gamma(\Spec(w))} "1";"3"};
{\ar@{->}^{\Gamma(\Spec(v))} "2";"4"};
{\ar@{->}^{\Gamma(f_\#)} "1";"2"};
{\ar@{->}_{\Gamma(f_\#)} "3";"4"};
\endxy
\end{equation}
commutes. 
Consider the diagram 
\begin{equation}
\begin{tikzcd}[
  row sep=large,
  column sep=large,
]
\scB\ar[rr,"\varepsilon_\scB"]\ar[dd,"\varphi", swap]\ar[dr,"w"] &
&\Gamma(\scO_\scB)\ar[dr,"\Gamma(\Spec(w))"]\ar[dd,"\Gamma(f_\#)",near end] & \\
&\scB\ar[rr,"\varepsilon_\scB",near start,crossing over] %
& & \Gamma(\scO_\scB)  & \\
\scA\ar[rr,"\varepsilon_\scA",near start]\ar[dr,"v"]
& & \Gamma(\scO_\scA)\ar[dr,"\Gamma(\Spec(v))", near start] & \\
& \scA\ar[rr,"\varepsilon_\scA"]\ar[<-,uu,"\varphi",crossing over, near start,
swap] & & \Gamma(\scO_\scA)\ar[<-,uu,"\Gamma(f_\#)",swap] &
\end{tikzcd} .
\end{equation}
The back and front faces of the cube commute because $\varepsilon$ is a natural
transformation.  The top and bottom faces of the cube commute by construction of
$\Spec(w)$ and $\Spec(v)$, respectively (see proof of
Lemma~\ref{lem:gamma-spec} above).  Since $\varepsilon_\scA$, $\varepsilon_\scB$ are isomorphisms, it follows that  the left face of the
cube commutes if and only if the right face of the cube commutes.
\end{proof}
\noindent Essentially the same proof also gives us the following result. 

\begin{lemma} \label{lem:3.8}
Let
$\scA$, $\scB$ be two complete $\cin$-rings,  $\uu{f}: \Spec(\scB)\to
\Spec(\scA)$ a map of local $\cin$-ringed spaces.  Two vector fields
$v\in \cX(\Spec(\scA))$ and $w\in \cX(\Spec(\scB))$ are
$\uu{f}$-related if and only if $\bGamma(w)$ and $\bGamma(v)$ are
$\varphi$-related, where $\varphi = \Spec\inv (\uu{f})$ (here as
before $\bGamma: \cX(\Spec(\scA)) \to \CDer(\scA)$, $\bGamma:
\cX(\Spec(\scB)) \to \CDer(\scB)$ are the isomorphism of Lemma~\ref{lem:3.4}).
\end{lemma}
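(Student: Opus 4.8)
The plan is to deduce Lemma~\ref{lem:3.8} directly from Lemma~\ref{lem:3.7} by transporting the statement across the bijections that are already available, so that no new diagram chase is needed. First I would fix the identifications. Since $\scA$ and $\scB$ are complete, the restriction of $\Spec$ to the category of complete $\cin$-rings is part of an adjoint equivalence, so the map
\[
\Spec: \Hom(\scA,\scB) \longrightarrow \Hom(\Spec(\scB),\Spec(\scA))
\]
is a bijection and $\varphi:=\Spec\inv(\uu f):\scA\to\scB$ is a well-defined map of $\cin$-rings with $\Spec(\varphi)=\uu f$; the contravariance of $\Spec$ is exactly what makes $\varphi$ point from $\scA$ to $\scB$ when $\uu f$ points from $\Spec(\scB)$ to $\Spec(\scA)$. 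Next, by Lemma~\ref{lem:3.4} and Remark~\ref{rmrk:3.5} the map $\bGamma:\cX(\Spec(\scA))\to\CDer(\scA)$ is an isomorphism whose inverse is the map $\Spec$ of \eqref{eq:Spec}, and likewise for $\scB$; hence $v=\Spec(\bGamma(v))$ and $w=\Spec(\bGamma(w))$.

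With these identifications in place I would simply invoke Lemma~\ref{lem:3.7}, applied to the map $\varphi:\scA\to\scB$ and the derivations $\bGamma(v)\in\CDer(\scA)$, $\bGamma(w)\in\CDer(\scB)$ (this is Lemma~\ref{lem:3.7} with the names of $\scA$ and $\scB$ interchanged, which is of course harmless). That lemma says that $\bGamma(v)$ and $\bGamma(w)$ are $\varphi$-related if and only if the vector fields $\Spec(\bGamma(w))$ and $\Spec(\bGamma(v))$ are $\Spec(\varphi)$-related. Substituting $\Spec(\bGamma(w))=w$, $\Spec(\bGamma(v))=v$, and $\Spec(\varphi)=\uu f$ turns the right-hand side into ``$v$ and $w$ are $\uu f$-related'', which is precisely the equivalence asserted in Lemma~\ref{lem:3.8}.

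The argument is essentially bookkeeping, and the one place where care is needed — the only place I expect any friction — is keeping the source/target conventions straight: in Definition~\ref{def:1.4} the vector field on the source of $\uu f$ sits on one side of the commuting square and the one on the target on the other, and correspondingly $\varphi$-relatedness of derivations (Definition~\ref{def:related-der}) is an asymmetric condition matched to the direction of $\varphi:\scA\to\scB$. Once one observes that the commuting square expressing $\uu f$-relatedness of $(v,w)$ is obtained by applying $\Spec$ (equivalently, $\Gamma$ together with the isomorphisms $\varepsilon_\scA$, $\varepsilon_\scB$) to the square expressing $\varphi$-relatedness of $(\bGamma(v),\bGamma(w))$, the equivalence is immediate. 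If a fully self-contained proof were wanted instead, one could repeat verbatim the cube diagram from the proof of Lemma~\ref{lem:3.7}, with $\varepsilon_\scA$ and $\varepsilon_\scB$ the counit components, the back and front faces commuting by naturality of $\varepsilon$ and the top and bottom faces by the construction of $\Spec$ on derivations; but the reduction above is shorter and exposes the logical structure more clearly.
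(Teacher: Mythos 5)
Your proof is correct. It does, however, organize the argument differently from the paper: the paper disposes of Lemma~\ref{lem:3.8} by remarking that ``essentially the same proof'' as that of Lemma~\ref{lem:3.7} works, i.e.\ it re-runs the naturality cube with the roles of the two rings adjusted, whereas you take the statement of Lemma~\ref{lem:3.7} as a black box and deduce Lemma~\ref{lem:3.8} by pure substitution, using that $\Spec$ is bijective on hom-sets between complete rings (so $\varphi=\Spec\inv(\uu f)$ exists and $\Spec(\varphi)=\uu f$) and that $\bGamma$ and the map $\Spec$ of \eqref{eq:Spec} are mutually inverse (Lemma~\ref{lem:3.4}, Remark~\ref{rmrk:3.5}), so $\Spec(\bGamma(v))=v$ and $\Spec(\bGamma(w))=w$. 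This is a legitimate and arguably cleaner route: it makes explicit that Lemma~\ref{lem:3.8} is nothing but Lemma~\ref{lem:3.7} read backwards through the established bijections, with no new diagram chase, while the paper's version keeps the two lemmas logically parallel at the cost of repeating (implicitly) the cube argument. You also correctly identify the only delicate point, the source/target bookkeeping: since relatedness of derivations (Definition~\ref{def:related-der}) and of vector fields (Definition~\ref{def:1.4}) is pinned down by which object each lives on and by the direction of $\varphi$, respectively $\uu f$, your substitution $w'=\bGamma(v)\in\CDer(\scA)$, $v'=\bGamma(w)\in\CDer(\scB)$ into Lemma~\ref{lem:3.7} (with the names of the rings interchanged) yields exactly the asserted equivalence, and your fallback remark that one could instead repeat the cube verbatim is precisely the paper's intended proof.
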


Since an integral curve of a vector field on an affine scheme
$\Spec(\scA)$ is a map from an interval to a scheme
(Definition~\ref{def:int_curve}), we need to understand maps from
manifolds to schemes.

\begin{theorem} \label{thm:3.9}
Let $\scA$ be a finitely generated and germ determined (hence
complete, cf.\ Remark~\ref{rmrk:2.10}) $\cin$-ring and
$L$ a manifold (possibly with boundary).  Let $\Pi: \cin(\R^n)\to
\scA$ be a choice of generators of $\scA$, $J= \ker\Pi$ and $Z_J$ the
set of zeros of $J$ (Definition~\ref{def:2.17}).  The map
\begin{equation}
  \begin{split}
\Psi: \Hom(\Spec (\cin(L)), \Spec(\scA))&\to \fM:= \{f:L\to \R^n\mid f
\textrm{ is } \cin \textrm{ and } f(L)\subseteq Z_J\}\\
 \Psi((\varphi, \varphi_\#))& \mapsto \varphi
\end{split}
\end{equation}  
is a bijection.   Here we identify $L $ with the set of $\R$-points
$\Hom (\cin(L), \R)$ of $\cin(L)$ and $Z_J$ with the set of
$\R$-points of $\scA$ (Lemma~\ref{lem:2.19}).
\end{theorem}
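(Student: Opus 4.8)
The plan is to show that $\Psi$ is well-defined and bijective by exhibiting an explicit inverse built from the universal properties of $\Spec$, $\Gamma$, and the finite presentation $\Pi:\cin(\R^n)\to\scA$. First I would unwind what the target set $\fM$ really is: a smooth map $f:L\to\R^n$ is the same thing as an $n$-tuple $(f_1,\ldots,f_n)$ of smooth functions on $L$, which (since $x_1,\ldots,x_n$ freely generate $\cin(\R^n)$ as a $\cin$-ring) is the same as a $\cin$-ring map $f^*:\cin(\R^n)\to\cin(L)$. The condition $f(L)\subseteq Z_J$ then translates, via Lemma~\ref{lem:2.19} (which identifies $Z_J$ with $X_\scA$ and its subspace topology with the Zariski topology), into the condition that $f^*(J)\subseteq I_L$ where $I_L$ is the vanishing ideal of $f(L)$ — equivalently, that $f^*$ descends through $\Pi$ to a $\cin$-ring map $\overline{f^*}:\scA\to\cin(L)$. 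So $\fM$ is in natural bijection with $\{\psi\in\Hom(\scA,\cin(L))\}$; I would want a clean lemma to this effect, being careful that $f^*(j)$ vanishing on $f(L)$ for every $j\in J$ is exactly what is needed, and that this in turn forces $f^*(J)$ into the kernel of $\cin(L)\to$ (germs along $f(L)$), but actually we only need $f^*(J)$ to lie in $J' := \ker(\cin(L)\to\scA')$ for $\scA'=\cin(L)/(\text{ideal gen.\ by }f^*(J))$ — here germ-determinedness of $\scA$ is what lets us pass between "vanishes on the zero set" and "lies in the kernel."

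Next I would invoke the adjoint equivalence from the excerpt: since $\scA$ is finitely generated and germ determined it is complete (Remark~\ref{rmrk:2.10}), and $\cin(L)$ is complete because $\Spec(\cin(L))=(L,\cin_L)$ (Remarks~\ref{rmrk:2.27}, \ref{rmrk:corners}). Hence $\Spec:\Hom(\scA,\cin(L))\xrightarrow{\ \sim\ }\Hom(\Spec(\cin(L)),\Spec(\scA))$ is a bijection, with inverse $\Gamma$ (composed with the structural isomorphisms $\varepsilon_\scA$, $\varepsilon_{\cin(L)}$). Composing this with the bijection $\fM\cong\Hom(\scA,\cin(L))$ of the previous paragraph gives a bijection $\fM\cong\Hom(\Spec(\cin(L)),\Spec(\scA))$. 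The remaining task is to check that this composite bijection is literally the map $\Psi$, i.e.\ that if $\psi:\scA\to\cin(L)$ corresponds to $f\in\fM$ via $f^*=\psi\circ\Pi$, then the continuous map underlying $\Spec(\psi):\Spec(\cin(L))\to\Spec(\scA)$ is, after the identifications $L=X_{\cin(L)}$ and $Z_J=X_\scA$, equal to $f$. This is a direct computation: by Construction~\ref{construction:2} the underlying map of $\Spec(\psi)$ is $x\mapsto x\circ\psi$ on $\R$-points, and under $p\leftrightarrow\ev_p$ and $q\leftrightarrow\overline{\ev}_q$ this sends $p\in L$ to the $\R$-point $a+J\mapsto(\psi(a))(p)$, whose corresponding point of $Z_J$ is the unique $q$ with $\overline{\ev}_q=\ev_p\circ\psi=\ev_p\circ\overline{f^*}$; chasing coordinate functions, $q$ has $i$-th coordinate $(\psi(\Pi(x_i)))(p)=(f^*(x_i))(p)=f_i(p)$, i.e.\ $q=f(p)$.

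I expect the main obstacle to be the bookkeeping in the first paragraph: carefully establishing that a smooth $f:L\to\R^n$ with image in $Z_J$ gives a well-defined $\cin$-ring map $\scA\to\cin(L)$, and that this assignment is a bijection onto $\Hom(\scA,\cin(L))$. The "well-defined" direction needs that $f^*(J)$ lies in the kernel of $\Pi_L:\cin(L)\to\cin(L)/(\text{gen by }f^*(J))$ — trivial — together with the observation that $\cin(L)/(\text{gen by }f^*(J))$ receives a map from $\scA$ only after we know $\ker\Pi_L\supseteq f^*(J)$, which is automatic; the subtler point is surjectivity of $\fM\to\Hom(\scA,\cin(L))$, where given $\psi:\scA\to\cin(L)$ we set $f:=$ the map with $f^*=\psi\circ\Pi$ and must verify $f(L)\subseteq Z_J$: for $p\in L$ and $j\in J$, $j(f(p))=(f^*j)(p)=(\psi(\Pi j))(p)=(\psi(0))(p)=0$, so $f(p)\in Z_J$. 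Injectivity of $\fM\to\Hom(\scA,\cin(L))$ is immediate since $\Pi$ is surjective, so $\psi$ determines $\psi\circ\Pi=f^*$ determines $f$. Once this dictionary is in place the theorem is just the composition of two bijections, and the identification of the composite with $\Psi$ is the routine point-chase sketched above. A secondary subtlety worth stating explicitly is that we are using Lemma~\ref{lem:2.19} both to identify $L$ with $X_{\cin(L)}$ (via Whitney/Milnor) and to identify $Z_J$ with $X_\scA$ as topological spaces, so that the phrase "$\Psi(\varphi,\varphi_\#)\mapsto\varphi$" makes sense as a map landing in $\fM$.
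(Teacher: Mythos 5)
Your proposal is correct and follows essentially the same route as the paper: identify $\fM$ with $\Hom(\scA,\cin(L))$ by descending $f^*$ through the surjection $\Pi$ (with the converse check $j(f(p))=(\psi(\Pi j))(p)=0$), use completeness of $\scA$ and $\cin(L)$ to make $\Spec$ a bijection on Hom-sets, and finish with the point-chase showing the underlying map of $\Spec(\overline{f^*})$ is $f$. One small remark: germ-determinedness is not what lets you pass from ``$f(L)\subseteq Z_J$'' to ``$f^*(J)=0$'' (that is immediate, as your own computation in the last paragraph shows); it is only needed, via completeness, to make $\Spec$ fully faithful.
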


\begin{remark}
We may define the set of ``smooth'' functions on $Z_J$ to be
$\cin(Z_J):= \cin(\R^n)|_{Z_J}$.  The pair $(Z_J, \cin(Z_J))$ is then
a differential space in the sense of Sikorski \cite{Si},  and a function $f$ is in
the set of $\fM$ if and only if $f:M\to Z_J$ is smooth as a map of
differential spaces. 
\end{remark}
  
\begin{proof}[Proof of Theorem~\ref{thm:3.9}]
  Recall that by Milnor's exercise the map
  
\[
H: \cin(L, \R^n) \to \Hom(\cin(\R^n), \cin(L)),\qquad H(f) = f^*
\]
is a bijection. The image of $\fM$ is the set
\[
H(\fM) = \{\varphi:\cin(\R^n) \to \cin(L) \mid \varphi(J) = 0\}.
\]
Since $\Pi:\cin(\R^n)\to \scA$ is surjective, the  map
\[
\Pi^*: \Hom(\scA, \cin(L)) \to \Hom(\cin(\R^n), \cin(L)), \qquad \Pi^*(\mu) =
\mu\circ \Pi
\]
is injective.  Its image is again $\{\varphi:\cin(\R^n) \to \cin(L)
\mid \varphi(J) = 0\} = H(\fM)$.
Hence
\[
\Phi:\fM \to \Hom(\scA, \cin(L)),\qquad \Phi( L\xrightarrow{f} Z_J) =
(\scA \xrightarrow{\overline{f}} \cin(L)),
\]  
where $\overline{f}$ is defined by $\overline{f}(\Pi(h)) = h\circ f$
for all $h\in \cin(\R^n)$, is a bijection. Since $\scA$ and $\cin(L)$
are both finitely generated and germ determined
\[
  \Spec: \Hom (\scA, \cin(L)) \to \Hom (\Spec(\cin(L)), \Spec(\scA))
\]  
is a bijection.  Consequently for any smooth function $f:L\to \R^n$
with $f(L)\subseteq Z_J$ we get a unique map
\[
\uu{\varphi}= (\varphi, \varphi_\#):= \Spec(\overline{f}):
\Spec(\cin(L)) \to \Spec(\scA).
\]  
Moreover all maps from $\Spec(\cin(L))$ to $\Spec(\scA)$ are of the form
$\Spec(\overline{f})$ for some smooth function $f:L\to \R^n$
with $f(L)\subseteq Z_J$.

We now argue that the map $\varphi:
X_{\cin(L)} \to X_\scA$ is $f$ once we identify $L \simeq X_{\cin(L)}$
and $Z_J \simeq X_\scA$.    In other words
\begin{equation}
  \xy
(-10,6)*+{X_{\cin(L)}}="1";
(14,6)*+{X_\scA}="2";
(-10, -6)*+{L}="3";
 (14, -6)*+{Z_J}="4";
{\ar@{<-}_{} "1";"3"};
{\ar@{<-} "2";"4"};
{\ar@{->}^{\varphi} "1";"2"};
{\ar@{->}_{f} "3";"4"};
(-24,6)*+{\ev_q}="-1";
(-24,-6)*+{q}="-3";
{\ar@{|->} "-3"; "-1"};
(24,6)*+{\overline{\ev}_p}="22";
(24,-6)*+{p}="44";
{\ar@{|->} "44"; "22"};
\endxy
\end{equation}
commutes. Here and below the right vertical map is the homeomorphism
of Lemma~\ref{lem:2.19}.  Recall that by construction of
$\Spec(\overline{f})$ the map
$\varphi$ sends an $\R$-point $x: \cin(L)\to \R$ to the $\R$-point $x
\circ \overline{f}: \scA\to \R$:
\[
\varphi(x) = x\circ \overline{f}.
\] 
Since $x = \ev_q$ for some $q\in L$, and  since $\overline{f}(\Pi(h)) =
h\circ f$ and 
$\overline{\ev}_p(\Pi(h)) = h(p)$  for all $h\in \cin(\R^n)$,
\[
\varphi(\ev_q) (\Pi(h)) =  \big(\ev_q \circ \overline{f} \big)(\Pi
(h)) = \ev_q (h\circ f) = h (f(q)) = \overline{\ev}_{f(q)} h.
\]  
In other words
\[
\varphi(\ev_q)  = \overline{\ev}_{f(q)}
\]  
and we are done.

\end{proof}

\section{Integral curves of vector fields on schemes} \label{sec:int}
The goal of the section is to prove the following result.

\begin{theorem}\label{thm:int_curve} \label{thm:4.1}
Let $v$ be a vector field on an affine scheme $(X_\scA, \scO_\scA) := 
\Spec(\scA)$ where $\scA$ is a finitely generated and germ determined
(hence complete) $\cin$-ring.  Then for any point $p\in X_\scA$ there
exists a (necessarily unique) maximal integral curve $\uu{\mu}_p = (\mu_p, (\mu_p)_\#):
(K_p, \cin_{K_p})\to \Spec(\scA)$ of the vector field $v$
subject to the initial condition
$\mu_p(0) = p$ (Definition~\ref{def:int_curve}).
\end{theorem}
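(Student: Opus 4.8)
The plan is to reduce everything to the classical existence and uniqueness theorem for integral curves of an ordinary smooth vector field on $\R^n$, transporting data back and forth between the scheme $\Spec(\scA)$ and $\R^n$ by means of the dictionaries already established: Lemma~\ref{lem:3.4} (vector fields $\leftrightarrow$ $\cin$-derivations), Lemma~\ref{lem:4.2} (lifting a derivation through a presentation of $\scA$), Lemma~\ref{lem:3.8} ($\uu{f}$-relatedness $\leftrightarrow$ $\varphi$-relatedness), and Theorem~\ref{thm:3.9} (morphisms from a manifold into $\Spec(\scA)$ $\leftrightarrow$ smooth maps into the zero set). To set things up I would choose a surjection $\Pi\colon\cin(\R^n)\to\scA$ (possible since $\scA$ is finitely generated), put $J:=\ker\Pi$ and let $Z:=Z_J\subseteq\R^n$ be its zero set, so that $X_\scA$ is identified with $Z$ via the homeomorphism of Lemma~\ref{lem:2.19}. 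Since $\scA$ is complete (Remark~\ref{rmrk:2.10}), Lemma~\ref{lem:3.4} turns $v$ into a $\cin$-derivation $w:=\bGamma(v)\in\CDer(\scA)$, and Lemma~\ref{lem:4.2} produces $V\in\CDer(\cin(\R^n))$ with $V(J)\subseteq J$ and $\Pi\circ V=w\circ\Pi$; by Remark~\ref{rmrk:3.66} we regard $V$ as an ordinary smooth vector field on $\R^n$. Classical ODE theory then gives, for each $p\in\R^n$, a unique maximal integral curve $\gamma_p\colon(\alpha_p,\beta_p)\to\R^n$ of $V$ with $\gamma_p(0)=p$.

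\emph{Construction of $\uu{\mu}_p$.} Fix $p\in Z$ and let $K_p$ be the connected component containing $0$ of the set $\gamma_p\inv(Z)$, which is closed in $(\alpha_p,\beta_p)$ because $Z$ is closed; thus $K_p$ is either the point $\{0\}$ or a one-dimensional interval, possibly with boundary (indeed $\gamma_p$ may run off the closed set $Z$, making $K_p$ a closed or half-open interval). In all cases $\gamma_p|_{K_p}\colon K_p\to\R^n$ is smooth with image in $Z$, so Theorem~\ref{thm:3.9} with $L=K_p$ turns it into a morphism $\uu{\mu}_p=(\mu_p,(\mu_p)_\#)\colon(K_p,\cin_{K_p})\to\Spec(\scA)$ whose underlying continuous map is $\gamma_p|_{K_p}$; in particular $\mu_p(0)=p$. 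If $K_p=\{0\}$ this is exactly case~(1) of Definition~\ref{def:int_curve}. If $K_p$ is one-dimensional, I would verify the relatedness condition using Lemma~\ref{lem:3.8} (legitimately, since $\cin(K_p)$ is complete): $\frac{d}{dt}$ and $v$ are $\uu{\mu}_p$-related if and only if $\frac{d}{dt}$ and $w$ are $\varphi$-related, where $\varphi=\Spec\inv(\uu{\mu}_p)\colon\scA\to\cin(K_p)$ is given by $\varphi(\Pi(h))=h\circ\gamma_p|_{K_p}$; and for every $h\in\cin(\R^n)$ one has, pointwise on $K_p$,
\[
\frac{d}{dt}\big(h\circ\gamma_p\big)=(Vh)\circ\gamma_p=\varphi\big(\Pi(Vh)\big)=\varphi\big(w(\Pi(h))\big),
\]
the first equality because $\gamma_p$ is a $V$-integral curve and the third because $\Pi\circ V=w\circ\Pi$. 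Hence $\uu{\mu}_p$ is an integral curve of $v$ with initial condition $p$.

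\emph{Maximality and uniqueness.} Let $\uu{\sigma}\colon(I,\cin_I)\to\Spec(\scA)$ be any integral curve of $v$ with $\sigma(0)=p$. By Theorem~\ref{thm:3.9} it corresponds to a smooth map $\tau\colon I\to\R^n$ with $\tau(I)\subseteq Z$, and running Lemma~\ref{lem:3.8} as above (now testing on $h=x_i$, the coordinate functions) the relation defining an integral curve forces $\dot{\tau}=V\circ\tau$ and $\tau(0)=p$. By uniqueness of integral curves of the smooth vector field $V$ we get $I\subseteq(\alpha_p,\beta_p)$ and $\tau=\gamma_p|_I$; since in addition $\tau(I)\subseteq Z$ and $I$ is connected and contains $0$, it follows that $I\subseteq K_p$, and then the bijectivity in Theorem~\ref{thm:3.9} gives $\uu{\sigma}=\uu{\mu}_p|_I$. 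Thus no integral curve of $v$ through $p$ properly extends $\uu{\mu}_p$, so $\uu{\mu}_p$ is the (necessarily unique) maximal integral curve of $v$ with initial condition $p$, and $K_p$ is its interval of definition.

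\emph{Expected main obstacle.} The only genuinely delicate point is that, in contrast to the case of a boundaryless manifold, $K_p$ need not be an open interval: the integral curve $\gamma_p$ of the lift $V$ can leave the closed set $Z$, so one must cut it back to $\gamma_p\inv(Z)$ and then check carefully that the truncated object still satisfies Definition~\ref{def:int_curve} --- including the degenerate case $K_p=\{0\}$ --- and that it is maximal. Everything else (the reduction of relatedness to an ODE via Lemmas~\ref{lem:3.8} and~\ref{lem:4.2}, and the passage between smooth maps and scheme morphisms via Theorem~\ref{thm:3.9}) is routine bookkeeping.
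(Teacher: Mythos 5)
Your proposal is correct and follows essentially the same route as the paper: lift $v$ to a derivation $\hat v=\bGamma(v)$ and then to a vector field $V$ on $\R^n$ via Lemma~\ref{lem:4.2}, take the classical maximal integral curve $\gamma_p$, cut it down to the connected component $K_p$ of $\gamma_p\inv(Z_J)$ containing $0$, convert $\gamma_p|_{K_p}$ into a morphism via Theorem~\ref{thm:3.9}, check relatedness through the dictionary of Lemmas~\ref{lem:3.7}/\ref{lem:3.8} (your displayed computation is exactly the paper's Lemma~\ref{lem:4.3}), and prove maximality by the same uniqueness-and-factorization argument. No substantive differences.
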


\begin{proof}
Since $\scA$ is finitely generated there is $n>0$ and a surjective map
\[
  \Pi: \cin(\R^n)\to \scA
\] of $\cin$-rings.  Let $J= \ker \Pi$,
\[
Z_J:= \{p \in \R^n \mid j(p) = 0\textrm{ for all }j\in J\}.
\]
(cf.\ Definition~\ref{def:2.17}).  Let $\hat{v} = \bGamma(v)\in
\CDer(\scA)$, the derivation of $\scA$ corresponding to $v$  (cf.\ \eqref{eq:3.2+}).
Under the identification $X_\scA$ with $Z_J$ (cf.\
Lemma~\ref{lem:2.19}), a point $p\in X_\scA$ corresponds to a point in
$Z_J$, which we again denote by $p$.

By Lemma~\ref{lem:4.2} there is a vector field $V $ on $\R^n$ with $\hat{v} (f+I) = V(f)+I$
for all $f\in \cin(\R^n)$, i.e., $V$ and $\hat{v}$
are $\Pi$-related.

Let $\gamma_p:I_p\to \R^n$ be
the unique maximal integral curve of the vector field $V$ 
with the initial condition $\gamma_p(0) = p$.
Let $K_p$ denote the connected component of 0 in $\gamma_p\inv (Z_J)$
and let $f= \gamma_p|_{K_p}$.  Then $f(K_p) \subseteq Z_J$ by
construction.  By Theorem~\ref{thm:3.9} we get
$\overline{f} := \overline{(\gamma_p|_{K_p})^*}:\scA\to \cin(K_p)$ and
\[
\uu{\mu}_p = (\mu_p, (\mu_p)_\#) = \Spec(\overline{f}): (K_p,
  \cin_{K_p}) \to \Spec(\scA)
\]  
with $\mu_p = \gamma_p |_{K_p}$.  Consequently $\mu_p(0) =
\gamma_p(0) = p$.  We now need a lemma:

\begin{lemma} \label{lem:4.3}
  We use the notation above. Assume $K_p\not = \{0\}$.  The derivation $\hat{v}\in
  \CDer(\scA)$  is $\overline{f}$-related to  $\frac{d}{dt}\in \CDer(\cin(K_p))$.
\end{lemma}

\begin{proof}
Since $\gamma_p$ is an integral curve of $V$
\[ 
\frac{d}{dt} (h \circ \gamma_p) = V(h)\circ \gamma_p
\]
for all $h\in \cin(\R^n)$.   Hence
\[
\frac{d}{dt} (h \circ \gamma_p|_{K_p}) = V(h)\circ \gamma_p|_{K_p}
\]
Since $f = \gamma_p|_{K_p}$ it follows that $V$ and $\frac{d}{dt}\in
\CDer(\cin(K_p))$ are $f^*$-related:
\[
  f^*\circ V = \frac{d}{dt} \circ f^*.
\]
By definition of $\overline{f}$, $f^* = \overline{f}\circ \Pi$.
Hence
\[
\frac{d}{dt} \circ \overline{f}\circ \Pi = \overline{f}\circ \Pi\circ V.
\]
Since $V$ and $\hat{v}$ are $\Pi$ related, $\overline{f}\circ \Pi\circ
V = \overline{f}\circ \hat{v} \circ \Pi$.  Therefore
\[
\frac{d}{dt} \circ \overline{f}\circ \Pi = \overline{f}\circ \hat{v} \circ \Pi.
\]
Since $\Pi$ is surjective it follows that
\[
\frac{d}{dt} \circ \overline{f}= \overline{f}\circ \hat{v}, 
\]
which is what we wanted to prove.
\end{proof}  
It follows from Lemmas~\ref{lem:3.7} and \ref{lem:4.3} that
$\frac{d}{dt}$ is $\uu{\mu}_p = \Spec(\overline{f})$-related to
$\Spec(\hat{v})$.  Since $\hat{v} = \bGamma (v)$ and since
$\bGamma\inv = \Spec$, $\Spec(\hat{v}) = v$.  We conclude that
$\uu{\mu}_p:(K_p, \cin_{K_p})\to \Spec(\scA)$ is an integral curve of
$v$ with $\mu_p(0) = p$.

It remains to prove that given any integral curve $\uu{\nu}: (L,
\cin_L) \to \Spec(\scA) $ of $v$ with $\nu(0) = p$,  the interval $L$ is a
subinterval of $K_p$, and  that
\begin{equation}
\uu{\nu}  = \uu{\mu}_p \circ \Spec(\imath^*)
\end{equation}
where $\imath: L\hookrightarrow  K_p$ is the inclusion and
$\imath^*:\cin(K_p) \to \cin(L)$ is the pullback map.  This would
prove that $\uu{\mu}_p$ is a maximal integral curve of $v$ with the
initial condition $\mu_p(0) = p$.

So let $\uu{\nu} = (\nu, \nu_\#): (L, \cin_L) \to \Spec(\scA) $ be an integral curve
of $v$ with $\nu(0) = p$.  Again we assume that $L\not = \{0\}$ (for
otherwise there is nothing to prove). Then by Lemma~\ref{lem:3.8}
$\hat{v} = \bGamma(v)$ is
$\Spec\inv (\uu{\nu})$-related to $\frac{d}{dt}$. By
Theorem~\ref{thm:3.9}, $\Spec\inv (\uu{\nu}) = \overline{\nu}$ where
$\overline{\nu}:\scA\to \cin(L)$ is the unique map of $\cin$-rings
with 
\[
\overline{\nu}\circ \Pi = \nu^*.
\]
Since $V$ is $\Pi$-related to $\hat{v}$ and since $\hat{v}$ is
$\overline{\nu}$-related to $\frac{d}{dt}$, $V$ is
$\overline{\nu}\circ \Pi = \nu^*$-related to $\frac{d}{dt}$.     That
is, $\nu:L\to \R^n$ is an integral curve of $V$ with $\nu(0) = p$, and
furthermore $\nu(L) \subseteq Z_J$.  It follows that $\nu =
\gamma_p|_L$.  Additionally, since $K_p$ is the connected component of
$\gamma_p\inv (Z_J)$ containing 0 and since $L$ is connected, contains
0 and is sent to $Z_J$ by $\nu = \gamma_p|_L$, $L$ has to be a subset
of $K_p$.  Therefore
\[
\nu = \gamma_p|_L = (\gamma_p|_{K_p} )|_L = \mu_p|_L.
\]  
Thus
\[
\nu = \mu_p\circ \imath
\]
where $\imath:L\hookrightarrow K_p$ is the inclusion.  Consequently
\[
  \overline{\nu}\circ \Pi = \nu^*  = \imath^*\circ \mu_p^* = \imath^*
  \circ \overline{\mu}_p \circ \Pi.
\]
Since $\Pi$ is surjective
\[
\overline{\nu}  = \imath^*\circ \overline{\mu}_p.
\]
Hence
\[
\uu{\nu} = \Spec(\overline{\nu})  =\Spec(\imath^*\circ \overline{\mu}_p)
= \uu{\mu}_p \circ \Spec(\imath^*).
\]  
We conclude that $\uu{\nu}: (L, \cin_L)\to \Spec(\scA)$ factors
through $\Spec(\imath^*) : (L, \cin_L) \to (K_p, \cin_{K_p})$,
i.e., through $\imath:L\hookrightarrow K_p$.  Therefore $\uu{\mu}_p$
is maximal among all integral curves of $v\in \cX(\Spec(\scA))$ with
the initial condition of passing through $p\in X_\scA$ at time 0.
This finishes the proof of Theorem~\ref{thm:int_curve}.
\end{proof}

\begin{example} \label{ex:4.3}
Denote the two coordinate functions on $\R^2$ by $x$ and $y$.
Consider the ideal $\langle y^2 \rangle \subset \cin(\R^2)$ and let
$\scA = \cin(\R^2)/\langle y^2 \rangle$ denote the quotient ring.
Since the vector field $V = \partial _x + y\partial _y$ preserves the
ideal $I = \langle y^2 \rangle$, it induces a derivation $\hat{v}$ on
$\scA$ hence a vector field $v$ on $\Spec (\scA)$.  By
Remark~\ref{rmrk:Dubuc-fg} we can identify $\Spec(\scA)$ with the
local $\cin$-ringed space $(Z,
\cE)$ where
\[
Z = \{(x,y) \in \R^2\mid y^2 = 0\} = \{(x,0) \mid x\in
\R\}.
\]
The vector field $V$ is complete and its flow $\Psi$ is given
by
\[
\Psi(x,y, t) = (x+t, ye^t)
\]
for all $(x,y, t) \in \R^3$.  Hence given $(x,0) \in Z$ the integral
curve $\gamma$ of $V$ with the initial condition $\gamma(0) = (x,0)$
is $\gamma(t) = (x+t, 0)$.   It follows that in this case $K_{(x,0)} = \R$ and
$\gamma|_{K_{(x,0)}} = \gamma$.  Note that $\gamma^* y^2 = 0$, hence
$\gamma^*:\cin(\R^2)\to \R$ induces  $\overline{f}  =
\overline{\gamma^*} : \scA = \cin(\R^2)/\langle y^2\rangle \to \R$ as we
should expect.  We then get
\[
\Spec(\overline{f}) = (\gamma, \gamma_\#): \Spec(\cin(\R)) \to \Spec(\scA)  = (Z, \cE)
\]  
where, for any open set $U\subset \R^2$,
\[
\gamma_{\#, U\cap Z} : \cE(U)  \to \cin(\gamma \inv(U))
\]
is induced by
\[
\cin(U)  /\langle y^2|_U\rangle \ni h + \langle y^2|_U\rangle \mapsto
h\circ \gamma.
\]

Similarly the vector field $\partial_x$ induces a derivation $\hat{u}$
on $\scA$ with
\[
  \hat{u} (f+\langle y^2\rangle) = \partial_x f +\langle y^2\rangle
\]
for all $f+\langle y^2\rangle \in \scA$.  Note that since
\[
\hat{u} (y+\langle y^2\rangle ) = \langle y^2\rangle \not = y +
\langle y^2\rangle  = \hat{v} (y +
\langle y^2\rangle),
\]
$\hat{u}$ and $\hat{v}$ are different derivations of $\scA$.  Hence
the corresponding vector fields $u$ and $v$ on $\Spec(\scA)$ are
different as well. None
the less the vector fields $u$ and $v$ 
have exactly the same integral curves, which  is easy to check.  It
follows that one cannot hope to recover a vector field on
$\Spec(\scA)$ from its integral curves even when the curves are
defined for all time.
\end{example}  

\begin{example} \label{ex:square}
Consider the unit square
\[
Z = \{(x,y) \in \R^2 \mid |x|, |y|\leq 1\}.
\]  
We can view $Z$ as a manifold with corners.  We can also view $Z$ as
$\Spec(\scA)$ where $\scA = \cin(\R^2)/I$, where $I$ is the vanishing
ideal of $Z$ (cf.\ Remark~\ref{rmrk:corners}).  By
Example~\ref{ex:3.2} any vector field $V$ on $\R^2$ induces a vector
  field on $Z$.  In particular we can take $V = x\partial_y -
  y\partial _x$ and the corresponding induced vector field on $Z$
  which we  denote by $v$.  The integral curve of $v$ through a point
  $p= (x,y) \in Z$ is then a restriction of the integral curve of $V$
  to the appropriate interval $K_p$.   If $x^2 + y^2 \leq 1$, then
  $K_p = \R$.  If $x^2 + y^2 = 2$ (i.e., $p = (\pm 1, \pm1)$) then
  $K_p = \{0\}$.  And if $1< x^2 + y^2 <2$, then $K_p$ is a closed interval.
\end{example}  

\section{Flows} \label{sec:flows}
The goal of this section is to finish the proof of  Theorem~\ref{thm:main}. In the
previous section (Section~\ref{sec:int}) we adressed the question of existence of unique maximal integral
curves.   Here we prove that these maximal integral curves
assemble into a flow.  But first we need to discuss the requirements
on the
affine scheme $\Spec(\scB)$ so that it can serve as the domain of
the flow. 
The intuition comes  from flows of vector
fields on manifolds (which should be the special case of flows of
vector fields on $\cin$-schemes).  Recall that given a vector field $X$ on a
manifold $M$ the flow $\Phi$ of $X$ is a $\cin$-map $\Phi:U\to M$
where $U\subset M\times \R$ is the open set given by
\[
U:= \{(p, t) \in M\times \R \mid p\in M, t\in I_p\}.
\]  
Here the open interval $I_p$ is the domain of the maximal integral
curve $\gamma_p$ of $X$ with $\gamma_p(0) = p$.   Furthermore
$\gamma_p(t) = \Phi(p, t)$ for all $(p,t)\in U$.  This is where
Definition~\ref{def:flow} comes from.  However $\Spec(\scA)$ is not a
manifold, and
Definition~\ref{def:flow}  leaves room for various choices of the
$\cin$-ring $\scB$.

To explain what these choices are let us go back to the setting of the
previous section, Section~\ref{sec:int}.   Fix a finitely generated complete
$\cin$-ring $\scA$ and a derivation $\hat{v}\in \CDer(\scA)$ that
corresponds to the vector field $v$ on $\Spec(\scA)$ (see Lemma~\ref{lem:3.4}).  Since
$\scA$ is finitely generated there is $n>0$ and a surjective map $\Pi:
\cin(\R^n) \to \scA$.  Let $J = \ker \Pi$ and let $Z_J$ be the zero
set of the ideal $J$ (Definition~\ref{def:2.17}). Then $Z_J$ is
homeomorphic to the space $X_\scA$ of points of $\scA$.  Choose $V\in
\CDer (\cin(\R^n))$ which is $\Pi$-related to $\hat{v}$ (see
Lemma~\ref{lem:4.2} ).  Let $\Psi:U\to \R^n$ denote the flow of $V$.
Then the set
\[
\cW:= \{(p,t) \in U \mid p\in Z, t\in K_p\}
\]
(where, as before, $K_p$ is the domain of the maximal integral curve
$\uu{\mu}_p= (\mu_p, {\mu_p}_\#)$ 
of $v$ with $\mu_p(0) = p$) should be the set of points $X_\scB$ of
$\scB$.  Since the integral curves of the vector field $v$ are induced by the integral
curves of the vector fields  $V$  it is reasonable to look for the
$\cin$-ring $\scB$ of the form $\cin(U)/\scI$ for some 
ideal $\scI\subset \cin(U)$ with the zero set $Z_\scI$ being $ \cW$.

To make sure that such ideal $\scI$ exists one needs to check that the
set $\cW$ is closed.  This is done in Lemma~\ref{lem:5.1} below.   The
next condition on $\scI$ comes from the way the map $\uu{\Phi}$ is going to be
constructed --- one expects the map to come in some way from the flow
$\Psi:U\to \R^n$.    If $\Psi^*J \subset \scI$ then $\Psi^*:\cin(\R^n)
\to \cin(U)$ induces
\[
\overline{\Psi^*}: \cin(\R^n)/J \to \cin(U)/\scI,\quad
\overline{\Psi^*} (f+J) = \Psi^*f +\scI.
\]  
 One may hope that $\Spec(\overline{\Psi^*})$ is the flow of $v$ and,
  in particular, that it satisfies the conditions of Definition~\ref{def:flow}.
We can try setting $\scI = \langle \Psi^*J\rangle$, the ideal
generated by the set $\Psi^*J$.   However, by Lemma~\ref{lem:5.3}
below,
\[
Z_{\Psi^*J}  = \Psi\inv (Z_J) = \{(p,t) \in U \mid  \Psi(p,t) \in Z_J\}
\]  
which may be too big: there may be points $(p,t) \in U$ with $p\not
\in Z_J$ and $\Psi(p,t) \in Z_J$.   This issue is easy to fix: let
\[
  \scI = \langle \Psi^*J \rangle + \langle \pr^*J \rangle, 
\]
where $\pr: U\to \R^n$ is defined by $\pr(p,t) = p$.  Then
\[
Z_{\langle \Psi^*J \rangle + \langle \pr^*J\rangle } = Z_{\langle
  \pr^*J\rangle } \cap Z_{\langle \Psi^*J\rangle }  = \pr\inv(Z_J)
\cap \Psi\inv (Z_J).
\]  
However if there is a point $p\in Z_J$ so that the integral curve
$\gamma_p(t) = \Psi(p,t) $ of the vector field $V$ leaves $Z_J$ and
then comes back (see Example~\ref{ex:square} above) then the set $\cW$
is strictly smaller than the intersection $\pr\inv(Z_J)
\cap \Psi\inv (Z_J)$.
Note that this problem does not occur if the vector field $v$ is
complete, i.e., all of its integral curves exist for all time (and
then any integral curve of $V$ with the initial condition in $Z_J$
stays in $Z_J$ for all time.)

To fix the last problem we can set
\begin{equation} \label{ex:5.1}
\scI := \langle \pr^*J \rangle  + \langle \Psi^*J \rangle + I_\cW,
\end{equation}  
where $I_\cW \subset \cin(U)$ is the vanishing ideal  of the closed
set $\cW$. At this point we may stop and set
\[
\scB = \cin(U)/(\langle \pr^*J \rangle  + \langle \Psi^*J \rangle +
I_\cW). 
\]  
However there is a point of view that makes this solution  not
entirely satisfactory.  Namely, recall that a vector field $X$ on a
manifold $M$ defines a map of vector bundles
\begin{equation}\label{eq:5.2}
\rho: M\times \R\to TM, \quad \rho (m, t) = t X(m).
\end{equation}
The map $\rho$  gives the trivial vector bundle $M\times \R\to M$ the
structure of a Lie algebroid over $M$.  The Lie algebroid integrates
to a Lie groupoid $\scG$. The space of objects of $\scG$ is $M$, the
space of arrows is the domain $U\subset M\times \R$ of the
flow $\Psi$ of $X$, the source and target maps of $\scG$ are the
projection on the first factor $\pr:U\to M$ and the flow $\Psi:U\to M$
respectively, the unit map $u:M\to U$ is $u(q) = (q, 0)$ and the
multiplication map is
\[
m: U\times _{\pr, M, \Psi} U \to U, \quad m((q_2, t_2), (q_1, t_1)) = (q_1, t_1 + t_2).
\]  
Given this fact, we may want to require that the vector field $v$ on
$\Spec(\scA)$ gives rise  to a groupoid internal to the category $\Aff$
of affine $\cin$-schemes, and   we would want the target map of this
groupoid to be the flow $\uu{\Phi}:\Spec(\scB) \to \Spec(\scA)$ of
$v$.

\begin{remark} \label{rmrk:cocompl}
We remind the reader that the category $\cring$ of $\cin$-rings is
complete and cocomplete \cite [Theorem~3.4.5]{Borceux}.  Hence, in
particular $\cring$ has coproducts and pushouts. Consequently the
opposite category $\op{\cring}$ has products and fiber products. Since
$\Spec$ is right adjoint, it preserves products and fiber products.
Therefore for any three $\cin$-rings $\scA, \scB, \scC$ and any two
maps $f:\scA \to \scB$, $g:\scA \to \scC$ the fiber product
$\Spec(\scB)\times_{\Spec(f), \Spec(\scA), \Spec(g)} \Spec(\scC)$
exists and equals $\Spec( \scB {\otimes_{\infty}}_ {f, \scA, g}
\scC)$, where $\scB {\otimes_{\infty}}_ {f, \scA, g}
\scC$ is the pushout of $\scB \xleftarrow{f} \scA \xrightarrow{g}
\scC$ in $\cring$.
\end{remark}

Returning to the groupoid integrating the vector field $v$, in addition to the target map $\uu{\Phi}:\Spec(\scB) \to \Spec(\scA)$
of the purported groupoid we would  want to have the source, the unit and the
multiplication maps as well.  A reasonable guess would be that these maps
should come from the structure maps of the Lie groupoid integrating
the vector field $V$ on $\R^n$.   Thus we would want
\begin{enumerate}
\item \label{5.0.i}$\pr: U\to \R^n$ to induce $\overline{\pr^*}: \scA =
  \cin(\R^n)/J\to \cin(U)/\scI  = \scB$,
\item \label{5.0.ii} $u:\R^n\to U$ to induce $\overline{u^*}: \cin(U)/\scI \to
  \cin(\R^n)/J$,
\item \label{5.0.iii}
  $m: U\times_{\pr, \R^n, \Psi} U\to U$ to induce $\overline{m^*} :
  \scB \to \scB\otimes_{\infty, \scA} \scB$, where the $\cin$-ring
  $\scB\otimes_{\infty, \scA} \scB$ is the pushout of the diagram
  $\scB\xleftarrow{\overline{\pr}} \scA \xrightarrow{\overline{\Psi}}
  \scB$ in the category $\cring$ of $\cin$-rings.
\end{enumerate}

Condition (\ref{5.0.i}) is automatic since $\pr^*J\subset \scI$.  For
(\ref{5.0.ii}) to hold we need $u^*\scI \subset J$.  Since $\pr\circ u
= \id$ and $\Psi \circ u = \id$, $u^*\pr^*J = J$ and $u^*\Psi^* J =
J$. On the other hand the inclusion $u^*I_\cW \subset J$ is far from
automatic and may fail to hold if $J$ is smaller than  the zero ideal
of the set $Z_J$.     This problem is fixable: set
\begin{equation} \label{eq:5.3}
I'_\cW := \{ g\in \cin(U) \mid g|_\cW =0 \textrm{ and } u^*g\in J\}.
\end{equation}  
Lemma~\ref{lem:5.11} shows that the zero set of the ideal $I'_\cW$ is still
$\cW$ and we then redefine $\scI$ to be
\begin{equation} \label{eq:5.44}
\scI:= \langle \pr^*J \rangle  + \langle \Psi^*J \rangle + I'_\cW.
\end{equation}

Unfortunately,  unless the vector field $v$ is complete (i.e., all of
its integral curves exist for all time), I have not been able to
construct the candidate comultiplication map $\overline{m^*}:  \scB \to
\scB\otimes_{\infty, \scA} \scB$.  This issue will be addressed
elsewhere.  The proof that for complete vector fields we do have a groupoid internal to the
category of $\cin$-schemes is given in the next section.\\[10pt]
We now proceed with the proof that vector fields on finitely generated
affine schemes have flows.  We keep the notation above.  So $\scA$ is
a finitely generated complete $\cin$-ring (since the $\cin$-ring $\scA$ is
  finitely generated, $\scA$ is  complete  if and only if $\scA$ is germ
  determined, see Remark~\ref{rmrk:2.10}),  $v$ is a
vector field on $\Spec(\scA)$,  $\hat{v}\in \CDer(\scA)$ the
corresponding derivation, $\Pi:
\cin(\R^n) \to \scA$ is a surjective map, $J = \ker \Pi$, $Z=
Z_J\subset \R^n$ is
the zero set of the ideal $J$, $V\in \CDer
(\cin(\R^n)$ is a choice of a vector field on $\R^n$ that is
$\Pi$-related on $\hat{v}$,  $\Psi:U\to \R^n$ is the flow of $V$ (so
$U$ is an open subset of $\R^n\times \R$),
$\gamma_p(t) = \Psi(p, t)$ the integral curve of $V$ with initial
condition $p$,  $\uu{\mu}_p= (\mu_p, {\mu_p}_\#)
:(K_p, \cin_{K_p}) \to \Spec(\scA) = (Z, \scO_\scA)$ the maximal integral curve of $v$
with $\mu_p(0) = p \in Z_J$, and $\scI \subset \cin(U)$ is the ideal given
by \eqref{eq:5.44}.

\begin{lemma}\label{lem:5.1}
We use the notation of the preceding paragraph.  The set
\begin{equation} \label{eq:cw}
\cW:= \{(p,t) \in U \mid p\in Z, t\in K_p\}
\end{equation}
is a closed subset of $U$.
\end{lemma}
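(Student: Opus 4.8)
The plan is to first rewrite $\cW$ in a form that makes its topology transparent, and then prove sequential closedness. Recall that the flow domain is $U=\{(p,t)\mid t\in I_p\}$, where $I_p\subseteq\R$ is the (open) interval on which the maximal integral curve $\gamma_p(\cdot)=\Psi(p,\cdot)$ of $V$ is defined; since $I_p$ is an interval containing $0$, if $(p,t)\in U$ then $(p,s)\in U$ for every $s$ between $0$ and $t$. Now $K_p$ is by definition the connected component of $0$ in $\gamma_p\inv(Z)$, and connected subsets of the interval $I_p$ are themselves intervals, so for $(p,t)\in U$ we have $t\in K_p$ if and only if $\gamma_p(s)=\Psi(p,s)\in Z$ for every $s$ between $0$ and $t$. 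Consequently
\[
\cW=\{(p,t)\in U\mid \Psi(p,s)\in Z \text{ for all } s \text{ between } 0 \text{ and } t\};
\]
taking $s=0$ shows $p=\Psi(p,0)\in Z$ automatically, so the clause $p\in Z$ in \eqref{eq:cw} is subsumed by this description.

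Since $U$ is an open subset of $\R^{n+1}$ it is metrizable, so it is enough to show $\cW$ is sequentially closed in $U$. So I would take a sequence $(p_k,t_k)\in\cW$ with $(p_k,t_k)\to(p,t)$ in $U$ and prove $(p,t)\in\cW$. If $t=0$ this is immediate: every $p_k\in Z$, $p_k\to p$, and $Z$ is closed, hence $p\in Z$ and $(p,0)\in\cW$. If $t\neq 0$, say $t>0$ (the case $t<0$ being symmetric, with $[0,t]$ replaced by $[t,0]$), then $t_k>0$ for all large $k$, and I would verify $\Psi(p,s)\in Z$ for each $s\in[0,t]$, treating the interior points $s<t$ and the endpoint $s=t$ separately. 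For $s\in[0,t)$: for all large $k$ we have $0\le s<t_k$, so $(p_k,s)\in U$ by the interval property above and $\Psi(p_k,s)\in Z$ because $(p_k,t_k)\in\cW$; since $(p,s)\in U$ as well and $(p_k,s)\to(p,s)$, continuity of $\Psi$ gives $\Psi(p_k,s)\to\Psi(p,s)$, and closedness of $Z$ forces $\Psi(p,s)\in Z$. For $s=t$: $\Psi(p_k,t_k)\in Z$ since $t_k\in K_{p_k}$, and $\Psi(p_k,t_k)\to\Psi(p,t)$ by continuity of $\Psi$ on $U$, so $\Psi(p,t)\in Z$ too. Hence $\Psi(p,s)\in Z$ for all $s\in[0,t]$, i.e.\ $(p,t)\in\cW$.

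The only point requiring a little care is the splitting into $s<t$ and $s=t$: one cannot simply assert $[0,t]\subseteq[0,t_k]$ because $t_k$ need not exceed $t$, so the endpoint has to be handled via the limit $\Psi(p_k,t_k)\to\Psi(p,t)$ rather than by fixing $s$ and varying $k$. Beyond that there is no genuine obstacle — the whole argument rests only on continuity of the flow $\Psi$, closedness of $Z$, and the fact that $I_p$ is an interval. I would state the proof in exactly this order: (i) the reformulation of $\cW$; (ii) reduction to sequential closedness; (iii) the case $t=0$; (iv) the case $t\neq 0$ with the interior/endpoint split.
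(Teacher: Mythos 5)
Your proof is correct and follows essentially the same route as the paper's: both reduce the condition $t\in K_p$ to the whole segment of times between $0$ and $t$ being mapped by $\Psi(p,\cdot)$ into $Z$ (via convexity of the intervals $K_p$), and then combine continuity of $\Psi$ with closedness of $Z$ along a convergent sequence in $\cW$. The only difference is cosmetic: the paper tests membership along the scaled times $a t_n$, $a\in[0,1]$, which handles interior points and the endpoint uniformly and thereby avoids the $s<t$ versus $s=t$ case split you carry out.
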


\begin{proof}
Suppose we have a sequence $\{(x_n,t_n)\} \subset \cW$ converging to
$(x,t)\in U$.   Since $Z$ is closed, $x\in Z$.  Since the sets $K_{x_n}$ are  intervals
containing 0, they are convex. It follows that  for any $a\in [0,1]$, $(x_n,at_n)\in \cW$. Moreover  
$(x_n,at_n) \to (x, at)$. Since $(x_n,at_n)\in \cW$, $\Psi(x_n,
at_n) \in Z$.   Since $\Psi$ is continuous, $\Psi(x_n,
at_n)  \to \Psi (x,at)$.  Since $Z$ is closed, $\Psi(x, at) \in
Z$ for all $a\in [0,1]$.   Hence $(x,t)\in \cW$.
\end{proof}

\begin{remark}
As was mentioned before the set $\cW$ in Lemma~\ref{lem:5.1} may  be strictly smaller
than the set
\[
  \{(p,t) \in U \mid p\in Z, \Psi(p,t) \in Z\}
\]
since an integral curve of $V$ starting at a point $p\in Z$ can leave
$Z$ in finite time and then come back; see Example~\ref{ex:square}.
\end{remark}

\begin{lemma}\label{lem:5.11}  The zero set of the ideal $I'_\cW$
  defined by \eqref{eq:5.3} is the set $\cW$ defined by \eqref{eq:cw}.
\end{lemma}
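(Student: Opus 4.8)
The plan is to prove the two inclusions $\cW\subseteq Z_{I'_\cW}$ and $Z_{I'_\cW}\subseteq\cW$ separately. The first is immediate from \eqref{eq:5.3}: every $g\in I'_\cW$ satisfies $g|_\cW=0$, so every point of $\cW$ lies in the zero set of $I'_\cW$. For the reverse inclusion I would fix a point $(x,t)\in U$ with $(x,t)\notin\cW$ and exhibit a function $g\in I'_\cW$ with $g(x,t)\neq 0$, distinguishing two cases according to whether $x\in Z$.

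In the first case, $x\notin Z=Z_J$, I would pick $j\in J$ with $j(x)\neq 0$ and set $g:=\pr^*j$, so that $g(p,s)=j(p)$ for $(p,s)\in U$. Since every point of $\cW$ has first coordinate in $Z_J$, this forces $g|_\cW=0$; since $\pr\circ u=\id$, one gets $u^*g=j\in J$; and $g(x,t)=j(x)\neq 0$. Hence $g\in I'_\cW$ and $(x,t)\notin Z_{I'_\cW}$.

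In the second case, $x\in Z$, the interval $K_x$ contains $0$ by construction, so $(x,0)\in\cW$, and therefore the assumption $(x,t)\notin\cW$ forces $t\neq 0$. By Lemma~\ref{lem:5.1} the set $\cW$ is closed in the manifold $U$ (an open subset of $\R^{n}\times\R$), so by Whitney's theorem (cf.\ Remark~\ref{rmrk:2.14}) there is $h\in\cin(U)$ with $h^{-1}(0)=\cW$; in particular $h|_\cW=0$ and $h(x,t)\neq 0$. I would then pick $\chi\in\cin(\R)$ vanishing on a neighbourhood of $0$ with $\chi(t)\neq 0$ (possible since $t\neq 0$) and set $g(p,s):=\chi(s)\,h(p,s)$. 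Then $g|_\cW=0$ because $h|_\cW=0$; the function $u^*g$ is $q\mapsto\chi(0)h(q,0)=0$, which lies in $J$; and $g(x,t)=\chi(t)h(x,t)\neq 0$. So again $g\in I'_\cW$ witnesses $(x,t)\notin Z_{I'_\cW}$. Combining the two cases gives $Z_{I'_\cW}\subseteq\cW$, hence $Z_{I'_\cW}=\cW$.

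The only real subtlety — and the reason $I'_\cW$ needs the extra constraint $u^*g\in J$ instead of being simply the vanishing ideal of $\cW$ — is arranging the two conditions $g|_\cW=0$ and $u^*g\in J$ to hold simultaneously at one witness function. The device that resolves this is multiplication by the cutoff $\chi(s)$ killing the germ along $s=0$, and it works precisely because a point of $U\smallsetminus\cW$ with $x\in Z$ must satisfy $t\neq 0$; this is exactly why the case $x\notin Z$ (where $t$ may be $0$) has to be handled separately, by pulling an element of $J$ back along $\pr$.
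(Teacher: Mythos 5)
Your proof is correct, and it uses the same basic ingredients as the paper --- Whitney's theorem applied to the closed set $\cW$ (via Lemma~\ref{lem:5.1}), pullbacks along $\pr$ of elements of $J$, and multiplication by a function of the time variable vanishing at $0$ --- but it organizes the complement of $\cW$ by a different case split. The paper splits on whether $t_0\neq 0$ or $t_0=0$: for $t_0\neq 0$ it takes the witness $t\cdot g$ with $g$ a Whitney function for $\cW$, and for $t_0=0$ (which forces the spatial coordinate to lie outside $Z_J$) it takes $(\pr^*h)\,g$ with $h\in J$ nonvanishing at the point. You instead split on whether the spatial coordinate lies in $Z_J$: when $x\notin Z_J$ your witness $\pr^*j$ is simpler than the paper's corresponding one (no Whitney function needed at all) and handles every value of $t$ at once, while when $x\in Z_J$ you correctly note that $t\neq 0$ and use $\chi(s)\,h(p,s)$ with a cutoff $\chi$ vanishing near $s=0$; here the paper's choice, multiplying $h$ by the coordinate function $s$ itself, would serve equally well and is marginally more economical, since only $\chi(0)=0$ is actually needed to get $u^*g=0\in J$. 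Both decompositions exhaust $U\smallsetminus\cW$ and each verification ($g|_\cW=0$, $u^*g\in J$, $g$ nonzero at the chosen point) is sound, so the two arguments are equally valid.
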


\begin{proof}
If $(p,t) \in \cW$ then $f(p,t) = 0$ for all $f\in I'_\cW$. Hence
$\cW\subset Z_{I'_\cW}$.

Suppose next that $(p,t_0)\not \in \cW$.  We argue that there is $f\in
I'_\cW$ with $f(p,t_0)\not = 0$.  There are two cases to consider:
$t_0\not = 0$ and $t_0=0$.  Suppose first that $t_0 \not = 0$.
Since $\cW$ is closed in $U$, by a theorem of Whitney there is $g\in
\cin(U)$ so that $\{x\in U\mid g(x) =0\} = \cW$. Then $g(p,t_0)\not =
0$.  Consider $f(p,t) := t g(p,t)$. The restriction $f|_\cW$ is zero,
and $f(p,t_0) = t_0g(p, t_0) \not = 0$.  On the other hand $(u^*f) (p)
= f(p, 0) = 0$ for all $p\in \R^n$ (since $\Psi $ is a flow, the set $\R^n
\times \{0\}$ is contained in $ U$).  Hence $u^*f = 0\in J$.  We conclude that  $f\in I'_\cW$.

Next suppose $t_0 = 0$.   Since $Z_J\times \{0\} \subset \cW$, $p\not
\in Z_J$.   Therefore there is $h\in J$ with $h(p) \not = 0$.  As
before choose $g\in \cin(U)$ with $\{g= 0\} = \cW$. Now consider $f =
\left(\pr^*h \right)g$.   Then
\[
f(p, 0) = h(p) g(p, 0).
\]
Since $(p,0)\not \in \cW$, $g(p,0)\not = 0$.  Therefore $f(p, 0)\not =
0$. On the other hand
\[
u^*f  =( u^* \pr^*h ) (u^*g) = h\cdot u^*g\in J
\]
since $h\in J$.  Therefore $f\in I'_\cW$ and again we are done.
\end{proof}  
 
\begin{lemma} \label{lem:5.3}
Let $f:N\to M$ be a smooth map between manifolds, $\cI\subset \cin(M)$
an ideal and $\langle f^*\cI\rangle \subset \cin(N)$ the ideal
generated by the pullback $f^*\cI$ of $\cI$.   Then
\[
Z_{\langle f^*\cI\rangle } = f\inv (Z_\cI).
\]  
\end{lemma}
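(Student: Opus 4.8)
The plan is to prove the set equality $Z_{\langle f^*\cI\rangle} = f\inv(Z_\cI)$ by establishing the two inclusions separately, each by a direct unwinding of Definition~\ref{def:2.17} and the definition of the generated ideal; I do not expect any genuine obstacle here. Recall first that a typical element $\phi$ of the ideal $\langle f^*\cI\rangle \subset \cin(N)$ is a finite sum $\phi = \sum_{i=1}^k h_i\,(g_i\circ f)$ with $h_i \in \cin(N)$ and $g_i \in \cI$.

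For the inclusion $f\inv(Z_\cI) \subseteq Z_{\langle f^*\cI\rangle}$, I would take a point $x\in N$ with $f(x) \in Z_\cI$, so that $g(f(x)) = 0$ for every $g\in\cI$; then for any $\phi$ as above, $\phi(x) = \sum_i h_i(x)\, g_i(f(x)) = 0$, whence $x \in Z_{\langle f^*\cI\rangle}$. For the reverse inclusion $Z_{\langle f^*\cI\rangle} \subseteq f\inv(Z_\cI)$, I would use only that $f^*\cI \subseteq \langle f^*\cI\rangle$: if $x \in Z_{\langle f^*\cI\rangle}$, then for every $g\in\cI$ the function $f^*g = g\circ f$ lies in $\langle f^*\cI\rangle$, so $g(f(x)) = (g\circ f)(x) = 0$; since $g$ was arbitrary, $f(x)\in Z_\cI$, i.e. $x\in f\inv(Z_\cI)$.

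Note that $\cI$ is not assumed to be finitely generated, but finite generation is never needed: the first inclusion uses only that every element of $\langle f^*\cI\rangle$ is a finite $\cin(N)$-linear combination of elements of $f^*\cI$, and the second uses only $f^*\cI \subseteq \langle f^*\cI\rangle$. Thus the lemma reduces entirely to these two observations together with the definitions of $Z_{(-)}$ and of the ideal generated by a set, and there is no hard step.
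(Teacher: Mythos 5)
Your proposal is correct and matches the paper's argument: the paper proves the same statement by a chain of equivalences, with the key (implicit) step being that the zero set of $\langle f^*\cI\rangle$ equals the common zero set of the generators $f^*\cI$, which you make explicit via finite $\cin(N)$-linear combinations. Your two-inclusion write-up is just a slightly more detailed rendering of the same unwinding of definitions.
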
  
\begin{proof}
This is a computation:
\[
  \begin{split}
x \in Z_{\langle f^*\cI\rangle }& \Leftrightarrow (f^*j)(x) = 0
\textrm{ for all }j\in \cI \Leftrightarrow j(f(x))= 0 \textrm{ for all
}j\in \cI\\ &\Leftrightarrow f(x)\in Z_\cI\Leftrightarrow  x\in f\inv(Z_\cI).
\end{split}
\]  
\end{proof}

\begin{lemma} For a manifold $M$ and any finite collection
  $\cI_1,\ldots \cI_n$ of ideals in $\cin(M)$
\[
Z_{\cI_1+\cdots +\cI_n}  =Z_{\cI_1} \cap \cdots \cap Z_{\cI_n}
\]    
\end{lemma}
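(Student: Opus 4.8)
The statement is $Z_{\cI_1 + \cdots + \cI_n} = Z_{\cI_1} \cap \cdots \cap Z_{\cI_n}$ for ideals $\cI_1,\ldots,\cI_n$ in $\cin(M)$. The plan is to prove this directly from the definition of the zero set (Definition~\ref{def:2.17}), reducing first to the case $n=2$ and then using induction; in fact the two-fold case already contains the whole idea and one can equally well just argue the general case in one stroke.

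First I would recall that $Z_{\cI}$ is by definition the set of points $p\in M$ at which every element of $\cI$ vanishes. For the inclusion $Z_{\cI_1 + \cdots + \cI_n} \subseteq Z_{\cI_1} \cap \cdots \cap Z_{\cI_n}$, observe that each $\cI_k$ is contained in the sum $\cI_1 + \cdots + \cI_n$ (take all other summands to be $0$), so if $p$ kills every element of the sum it kills every element of each $\cI_k$; hence $p \in Z_{\cI_k}$ for all $k$. For the reverse inclusion, suppose $p \in Z_{\cI_k}$ for every $k$. A general element of $\cI_1 + \cdots + \cI_n$ has the form $j_1 + \cdots + j_n$ with $j_k \in \cI_k$; evaluating at $p$ gives $j_1(p) + \cdots + j_n(p) = 0 + \cdots + 0 = 0$. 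Thus $p$ lies in $Z_{\cI_1 + \cdots + \cI_n}$, which proves the two inclusions and hence the equality.

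There is essentially no obstacle here: the argument is a one-line chase through the definition, using only that an ideal sum consists of finite sums of elements from the summands and that a finite sum of numbers each equal to zero is zero. The only thing to be slightly careful about is the bookkeeping for general finite $n$ versus $n=2$, but since nothing in the argument changes, I would simply write it for general $n$. (If one prefers, one proves the $n=2$ case as above and then iterates using $\cI_1 + \cdots + \cI_n = (\cI_1 + \cdots + \cI_{n-1}) + \cI_n$ together with the induction hypothesis.) This lemma, combined with Lemma~\ref{lem:5.3}, then gives the computation of the zero set of $\scI$ in \eqref{eq:5.44} as $\pr\inv(Z_J) \cap \Psi\inv(Z_J) \cap Z_{I'_\cW}$, which by Lemma~\ref{lem:5.11} simplifies appropriately.
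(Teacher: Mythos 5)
Your argument is correct and is exactly the routine two-inclusion check the paper treats as obvious (its proof is simply ``Omitted''): each $\cI_k$ sits inside the sum, giving one inclusion, and a finite sum of functions vanishing at $p$ vanishes at $p$, giving the other. Nothing further is needed.
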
  
\begin{proof}
  Omitted.
\end{proof}

Now consider the ideal
\[
\scI: = \langle \Psi^*J \rangle + \langle \pr^* J \rangle + I'_\cW.
\]
where, as before, $I'_\cW$ is given by \eqref{eq:5.3}.
Then the zero set of $\scI$ is
\[
Z_\scI = Z_{\langle \Psi^*J \rangle } \cap Z_{\langle \pr^* J \rangle } \cap Z_{I'_\cW}
\]  
By Lemma~\ref{lem:5.3} $Z_{\langle \Psi^*J \rangle } = \Psi\inv (Z_J)$
and $Z_{\langle \pr^* J \rangle } =\pr\inv (Z_J)$.  Since
$\cW \subset \Psi\inv (Z_J) \cap \pr\inv (Z_J)$ and since
$Z_{I'_W} = \cW$
\[
Z_\scI  = \cW.
\]  
Let $\scB : = \cin(U)/\scI$.   By Lemma~\ref{lem:2.19} the set of
points $X_\scB$ ``is'' $Z_\scI = \cW$.  Since $\Psi^* J \subset \scI$ by
construction of $\scI$,  $\Psi^*:\cin(\R^n) \to \cin(U)$ induces
\[
\overline{\Psi^*}: \scA = \cin(\R^n)/J \to \scB = \cin(U)/\scI
\]
so that the diagram
\[
  \xy
(-14,6)*+{\cin(\R^n) }="1";
(19,6)*+{\cin(U)}="2";
(-14, -6)*+{\cin(\R^n)/J}="3";
 (19, -6)*+{\cin(U)/\scI}="4";
{\ar@{->}_{\Pi} "1";"3"};
{\ar@{->}^{\Pi'} "2";"4"};
{\ar@{->}^{\Psi^*} "1";"2"};
{\ar@{->}_{\overline{\Psi^*}} "3";"4"};
\endxy
\] commutes.  Here $\Pi'$ denotes the quotient map. 
Similarly we have
\[
  \overline{\pr^*}: \cin(\R^n)/J\to \cin(U)/\scI,
\]  
which induces
\[
\Spec( \overline{\pr^*}) :\Spec(\scB) \to \Spec(\scA),
\]  
and
\[
\overline{u^*}:\cin(U)/\scI\to \cin(\R^n)/J,
\]  
which induces
\[
\Spec( \overline{u^*}) :\Spec(\scA) \to \Spec(\scB).
\]

We now argue that
\[
\uu{\Phi}: = \Spec(\overline{\Psi^*}): \Spec(\scB)\to \Spec(\scA)
\]  
is the flow of $v$ (cf.\ Definition~\ref{def:flow}).

As we already noted, we may identify the space of points $X_\scB$ of
the $\cin$-ring
$\scB$ with
\[
  Z_\scI = \cW = \bigcup_{p\in Z_J = X_\scA} \{p\} \times  K_p.
\]
For every point $p\in Z_J$ we have a smooth map
\[
  \zeta_p:K_p \to U, \qquad \zeta_p(t)  = (p, t).
\]
By definitions of $K_p$ and $\cW$, $\zeta_p (K_p) \subseteq \cW$.
By Theorem~\ref{thm:3.9} $\zeta_p$ corresponds to
\[
\uu{\zeta}_p: (K_p, \cin_{K_p}) \to \Spec(\scB)
\]  
with $\uu{\zeta}_p = \Spec(\overline{\zeta_p^*})$ (where
$\overline{\zeta_p^*}: \scB \to \cin(K_p)$ is induced by $\zeta_p^*$).  It remain to
check that $\uu{\Phi} \circ \uu{\zeta}_p$ is the maximal integral
curve $\uu{\mu}_p$ of $v$ with the appropriate initial condition.
Since $\Psi$ is the flow of $V$,
\[
\Psi \circ \zeta_p  = \gamma_p|_{K_p}.
\]  
Then
\[
\overline{\zeta_p^* } \circ \overline{\Psi^*} \circ \Pi  =
\overline{\zeta_p^* } \circ \Pi'  \circ \Psi^* = \zeta_p^* \circ
\Psi^* = (\gamma_p|_{K_p})^*  = \overline{(\gamma_p|_{K_p})^*} \circ \Pi.
\]  
Since $\Pi$ is surjective
\[
\overline{\zeta_p^* } \circ \overline{\Psi^*} =\overline{(\gamma_p|_{K_p})^*}.
\]  
Hence
\[
\uu{\mu}_p = \Spec(\overline{(\gamma_p|_{K_p})^*}) =
\Spec(\overline{\zeta_p^* } \circ \overline{\Psi^*}) =
\Spec(\overline{\Psi^*}) \circ \Spec(\overline{\zeta_p^* } ) =
\uu{\Phi} \circ \uu{\zeta}_p.
\] 
This finishes the proof of the existence of flows and therefore of Theorem~\ref{thm:main}.

\begin{example}
Let $\scA = \cin(\R^2)/\langle y^2\rangle $ be the $\cin$-ring in
Example~\ref{ex:4.3}, $\hat{v}\in \CDer(\scA)$ the derivation induced
by $V = \partial _x + y\partial _y$ and $v\in
\cX(\Spec(\cin(\R^2)/\langle y^2\rangle))$ the corresponding vector
field.  The flow $\uu{\Phi}$ of $v$ is induced by the flow $\Psi$ of
$V$, which as we have seen in Example~\ref{ex:4.3} is the smooth map
\[
\Psi:\R^3 \to \R^2, \quad \Psi(x,y,t) = (x+t, e^t y).
\]
We have seen that all integral curves of $v$ are define for all time.
We may then take the domain of the flow $\uu{\Phi}$ to be
$\Spec(\scB)$ where
\[
  \scB = \cin(\R^3)/ (\pr^* \langle y^2\rangle +\Psi^*
  \langle y^2\rangle)
\]
Since $\pr^*y^2  = y^2$ and $\Psi^* y^2  = e^{2t}y^2$, $\scB =
\cin(\R^3)/\langle y^2\rangle$ and $X_\scB \simeq \{(x,y,t\in \R^3
\mid y=0\}$.  The flow $\uu{\Phi} =(\Phi, \Phi_\#)$ is then $\Spec(\overline{\Psi^*})$
where
\[
\overline{\Psi^*}: \cin(\R^2)/\langle y^2 \rangle \to \cin(\R^3)/\langle
y^2 \rangle, \quad \overline{\Psi^*} (f+\langle y^2\rangle ) = \Psi^*f +\langle y^2\rangle.
\]
In particular $\Phi (x,0, t) = (x+t, 0)$ for all $(x,0, t) \in X_\scB$.
\end{example}

\section{Integration of  complete vector fields to groupoids}\label{sec:6}

In this section we prove that a complete vector field on an affine
scheme $\Spec(\scA)$ (where, as before $\scA$ is finitely generated
and germ determined) integrates to a groupoid in $\Aff \subset \lcrs$.

\begin{theorem} \label{thm:groupoid}
Let $\scA$ be a finitely generated and germ determined $\cin$-ring,
$\hat{v}\in \CDer (\scA)$ a derivation which gives rise to a complete
vector field $v$ on $\Spec(\scA)$.  Then the flow $\uu{\Phi}: \scB \to
\scA$ of $v$ is the target map of a groupoid internal to $\Aff$.
\end{theorem}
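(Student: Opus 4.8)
Keeping the notation of Section~\ref{sec:flows} (so $\Pi:\cin(\R^n)\twoheadrightarrow\scA$, $J=\ker\Pi$, $V$ is $\Pi$-related to $\hat v$, $\Psi:U\to\R^n$ is the flow of $V$, and $\scI\subset\cin(U)$ is the ideal \eqref{eq:5.44} with $\scB=\cin(U)/\scI$), the plan is to produce the groupoid by transporting, through $\Spec$, the \emph{flow groupoid} $\scG_V\rightrightarrows\R^n$ of $V$, and then restricting it to the closed subschemes $\Spec(\scA)\hookrightarrow(\R^n,\cin_{\R^n})$ and $\Spec(\scB)\hookrightarrow(U,\cin_U)$ cut out by $\Pi$ and by the quotient map $\Pi':\cin(U)\twoheadrightarrow\scB$. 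Recall $\scG_V$ has arrow manifold $U$, source $\pr:U\to\R^n$, target $\Psi:U\to\R^n$, unit $u:\R^n\to U$, $u(q)=(q,0)$, inverse $i:U\to U$, $i(p,t)=(\Psi(p,t),-t)$, and multiplication $m:U\times_{\pr,\R^n,\Psi}U\to U$, $m((q_2,t_2),(q_1,t_1))=(q_1,t_1+t_2)$; since $\pr$ and $\Psi$ are submersions the fibre product manifold exists and $\scG_V$ is a Lie groupoid (the groupoid of the local $\R$-action generated by $V$). As $\Spec$ preserves fibre products (Remark~\ref{rmrk:cocompl}), as $\Spec(\cin(N))=(N,\cin_N)$ for manifolds, and as $\cin(U\times_{\pr,\R^n,\Psi}U)=\cin(U)\otimes_{\infty,\cin(\R^n)}\cin(U)$ (transverse fibre product), $\scG_V$ is already a groupoid internal to $\Aff\subset\lcrs$, whose structure morphisms are $\Spec$ of the pullback maps $\pr^*,\Psi^*,u^*,i^*,m^*$.

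Before restricting I would record one simplification coming from $V(J)\subseteq J$: writing $J=\langle f_1,\dots,f_r\rangle$ and $Vf_i=\sum_j a_{ij}f_j$, the functions $f_i\circ\Psi\in\cin(U)$ satisfy the linear system $\partial_t(f_i\circ\Psi)=\sum_j(a_{ij}\circ\Psi)(f_j\circ\Psi)$ with $(f_i\circ\Psi)|_{t=0}=f_i\circ\pr$, so solving it (smooth dependence of linear ODEs on parameters) gives $M\in\cin(U,GL_r)$ with $f_i\circ\Psi=\sum_jM_{ij}(f_j\circ\pr)$, i.e.\ $\Psi^*J\subseteq\langle\pr^*J\rangle$ in $\cin(U)$. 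Together with completeness of $v$, which forces $K_p=\R$ for all $p\in Z_J$ and hence $\cW=Z_J\times\R\subseteq U$, this identifies $\scI$ with $\langle\pr^*J\rangle+I'_\cW$. Now I would check that the structure maps descend to the quotients: (i) $\pr^*J\subseteq\scI$ and $\Psi^*J\subseteq\langle\pr^*J\rangle\subseteq\scI$ give $\overline{\pr^*},\overline{\Psi^*}:\scA\to\scB$, hence the source $\Spec(\overline{\pr^*})$ and the target $\uu\Phi=\Spec(\overline{\Psi^*})$, which by Section~\ref{sec:flows} is the flow of $v$; (ii) $\pr\circ u=\Psi\circ u=\id$ and the defining property \eqref{eq:5.3} of $I'_\cW$ give $u^*\scI\subseteq J$, hence the unit $\overline{u^*}:\scB\to\scA$; (iii) $\pr\circ i=\Psi$, $\Psi\circ i=\pr$, $i\circ u=u$ give $i^*\pr^*J=\Psi^*J\subseteq\scI$, $i^*\Psi^*J=\pr^*J\subseteq\scI$, and for $g\in I'_\cW$ one has $g\circ i|_\cW=0$ (the inverse preserves $\cW$, each $K_p$ being a connected interval through $0$) and $u^*(g\circ i)=u^*g\in J$, so $i^*\scI\subseteq\scI$ and we get the inverse $\overline{i^*}:\scB\to\scB$.

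The remaining descent, (iv), is the multiplication: we must show $m^*\scI\subseteq\scJ$, where $\scJ=\langle\pi_1^*\scI,\pi_2^*\scI\rangle\subseteq\cin(U\times_{\pr,\R^n,\Psi}U)$ (with $\pi_1,\pi_2$ the two projections) is the ideal with $\cin(U\times_{\pr,\R^n,\Psi}U)/\scJ=\scB\otimes_{\infty,\scA}\scB$; this yields $\overline{m^*}:\scB\to\scB\otimes_{\infty,\scA}\scB$, hence $\Spec(\overline{m^*})$ on $\Spec(\scB)\times_{\Spec(\scA)}\Spec(\scB)$ (the latter being $\Spec(\scB\otimes_{\infty,\scA}\scB)$ by Remark~\ref{rmrk:cocompl}). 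Granting (iv), the groupoid axioms over $\Spec(\scA)$ are formal: each is an identity between $\Spec$'s of composites of the descended maps, dual to the corresponding identity for $\pr^*,\Psi^*,u^*,i^*,m^*$ (which holds because $\scG_V$ is a groupoid), and since $\Pi$, $\Pi'$, and the induced surjections onto $\scB\otimes_{\infty,\scA}\scB$ and its triple analogue are epimorphisms in $\cring$, precomposing with them transfers each identity from $\scG_V$. In particular $\uu\Phi=\Spec(\overline{\Psi^*})$ is then the target map.

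The hard part is (iv)---exactly the comultiplication whose construction is open in general, which is why completeness is hypothesized. The summands are easy when they come from $J$: $\pr\circ m=\pr\circ\pi_2$ gives $m^*\langle\pr^*J\rangle=\langle\pi_2^*\pr^*J\rangle\subseteq\langle\pi_2^*\scI\rangle$, and $\Psi\circ m=\Psi\circ\pi_1$ gives $m^*\langle\Psi^*J\rangle\subseteq\langle\pi_1^*\scI\rangle$. The content is $m^*I'_\cW\subseteq\scJ$, and here completeness enters: with $\cW=Z_J\times\R$ the composable locus over $Z_J$ is the product $Z_J\times\R\times\R$, and in coordinates $(q_1,t_1,t_2)$ on $U\times_{\pr,\R^n,\Psi}U$ one has $m(q_1,t_1,t_2)=(q_1,t_1+t_2)$, $\pi_2=(q_1,t_1)$, $\pi_1=(\Psi(q_1,t_1),t_2)$. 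For $g\in I'_\cW\subseteq\scI$ we get $\pi_2^*g\in\langle\pi_2^*\scI\rangle\subseteq\scJ$, so modulo $\scJ$ it remains to handle $g\circ m-\pi_2^*g=g(q_1,t_1+t_2)-g(q_1,t_1)$, which Hadamard's lemma writes as $t_2\cdot h$ with $h$ built from $\partial_tg$; using that $g$ vanishes on the product $\cW$, that $u^*g\in J$, and the identity $\Psi^*J\subseteq\langle\pr^*J\rangle$ from the second paragraph, one expands this remainder in the fibre-product coordinates and exhibits it in $\langle\pi_1^*\scI\rangle+\langle\pi_2^*\scI\rangle$. I expect this Hadamard-type bookkeeping, together with the analogous (routine) compatibility checks for the double and triple fibre products entering the associativity axiom, to be the only substantive computation; everything else is functoriality of $\Spec$ and epimorphism-chasing.
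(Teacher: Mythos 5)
There is a genuine gap, and it sits exactly at the step you yourself label ``the hard part''. You keep the Section~\ref{sec:flows} ideal \eqref{eq:5.44}, so your $\scI$ contains the correction term $I'_\cW$ of \eqref{eq:5.3}, and then the descent of the multiplication requires $m^*I'_\cW\subseteq\scJ$. You never prove this: the final paragraph only asserts that a Hadamard-type expansion of $g(q_1,t_1+t_2)-g(q_1,t_1)$ ``one expects'' to land in $\langle\pi_1^*\scI\rangle+\langle\pi_2^*\scI\rangle$. But membership of a function vanishing on $\cW$ in the ideal \emph{generated} by pullbacks of $\scI$ is precisely the kind of statement (vanishing ideal versus generated ideal) that does not come for free, and it is precisely the obstruction the paper identifies at the end of Section~\ref{sec:flows} as the reason $\overline{m^*}$ could not be constructed in general. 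The paper's proof of Theorem~\ref{thm:groupoid} does not attack this problem at all: in the complete case it \emph{redefines} $\scI:=\langle\pr^*J\rangle+\langle\Psi^*J\rangle$, dropping $I'_\cW$ altogether. This is legitimate because completeness gives $\cW=Z_J\times\R=\pr\inv(Z_J)=\Psi\inv(Z_J)$, so the zero set of the smaller ideal is already $\cW$, the flow construction of Section~\ref{sec:flows} goes through verbatim, and $u^*\scI\subseteq J$ still holds since $\pr\circ u=\Psi\circ u=\id$. With that choice, $m^*\scI\subseteq\scJ$ follows from the two elementary identities $m^*(\pr^*f)=\imath_2(\pr^*f)$ and $m^*(\Psi^*f)=\imath_1(\Psi^*f)$ on the fibre product, and no Hadamard bookkeeping is needed. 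So your architecture (descend the flow groupoid of $V$ through $\Spec$, realize $\scB\otimes_{\infty,\scA}\scB$ as a quotient of $\cin(U\times U)$, transfer the axioms by surjectivity) matches the paper, but your insistence on carrying $I'_\cW$ leaves the decisive inclusion unestablished; to repair the proof you should either supply a genuine argument for $m^*I'_\cW\subseteq\scJ$ or, as the paper does, change $\scI$.

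A secondary problem: your ``simplification'' $\Psi^*J\subseteq\langle\pr^*J\rangle$ is derived by writing $J=\langle f_1,\dots,f_r\rangle$ and solving a linear ODE system for $f_i\circ\Psi$. Finite generation of $\scA$ as a $\cin$-ring (Definition~\ref{def:fg}) means only that $\scA$ is a quotient of some $\cin(\R^n)$; it does \emph{not} make the kernel $J$ a finitely generated ideal (think of the vanishing ideal of the Cantor set), so this lemma is unjustified as stated, and your sketch of the Hadamard step leans on it again. The paper never uses such a statement. The parts of your proposal that do work --- descent of $\pr^*,\Psi^*,u^*$ (and your extra treatment of the inverse $i$, which the paper leaves to the omitted axiom check), the identification of $\scB\otimes_{\infty,\scA}\scB$ via transverse fibre products and Remark~\ref{rmrk:cocompl}, and the epimorphism-chasing for the groupoid identities --- are consistent with the paper's argument.
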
  

\begin{proof}
As in Section~\ref{sec:flows} we may assume that $\scA = \cin(\R^n)/J$
and that the derivation $\hat{v}$ is induced by a vector field $V$ on
$\R^n$.  We continue to use the notation of Section~\ref{sec:flows}.
Thus $\Psi:U\to \R^n$ is the flow of $V$ (with $U\subset \R^n\times
\R$ open), and we identify the set $X_\scA$ of points of $\scA$ with
the zero set $Z_J$ of the ideal $J$.  Since the vector field $v$ is
complete by assumption every integral curve of $v$ is defined for all
times.  Hence for all $p\in Z_J$ the domain $K_p$ of the integral
curve $\uu{\mu}_p$ of $v$ with initial condition $p$ is all of $\R$
and the set $\cW := \{(p,t) \in U \mid p\in Z, t\in K_p\}$ underlying the domain of the flow $\uu{\Phi}$ of $v$ is
\[
\cW= Z_J\times \R \quad  (= \pr\inv (Z_J) = \Psi\inv (Z_J)).
\]  
We set
\begin{equation}
\scI:= \langle \pr^* J\rangle + \langle \Psi^*J\rangle 
\end{equation}  
and
\[
  \scB:= \cin(U)/\scI.
\]
As before we have three induced maps $\overline{\pr^*} : \scA \to \scB$,
$\overline{\Psi^*}: \scA \to \scB$ and $\overline{u^*}: \scB \to \scA$.

We now argue that the multiplication
\[
m: U\times _{\pr, \R^n, \Psi} U\to U, \quad m ((p_2, t_2), (p_1, t_1)) = (p_1, t_1+ t_2)
\]  
induces a map
\[
\overline{m^*}: \scB \to \scB\otimes_{\infty, \scA} \scB
\]  
where $\scB \to \scB\otimes_{\infty, \scA} \scB$ is the pushout of
$\scB\xleftarrow{\overline{\pr}} \scA \xrightarrow{\overline{\Psi}}
\scB$.

For any diagram $\scC \xleftarrow{f} \scE \xrightarrow{g} \scD$ the
pushout $\scC {\otimes_\infty}_{f,\scE,g} \scD$ is the quotient
\begin{equation}
\scC {\otimes_\infty} \scD/ \langle \imath_2 (f(x))- \imath_1
(g(x))\mid x\in \scE\rangle
\end{equation}
where $\scC\xrightarrow{\imath_2} \scC\otimes_\infty \scD
\xleftarrow{g} \scD$ is the coproduct in $\cring$ and $\langle \imath_2 (f(x))- \imath_1
(g(x))\mid x\in \scE\rangle$ is the ideal in the coproduct generated
by the set $\{ \imath_2 (f(x))- \imath_1
(g(x))\mid x\in \scE\}$.
By  \cite[Proposition~2.5 on p.~26]{MR} the global sections functor
$\Gamma$ restricted to the category of manifolds
\[
\Gamma: \op{\Man} \to \cring,\qquad \Gamma( M\xrightarrow{f}N) = \cin(N)
\xrightarrow{f^*} \cin(M)
\]
sends products of manifolds to the coproducts of their rings of
functions:
\[
\cin(M\times N)  = \cin(M)\otimes_\infty \cin(N).
\]  
By \cite[Theorem~2.8 on p.~30]{MR} the  functor $\Gamma$
sends transverse fiber products in $\Man$ to pushouts in $\cring$:
\begin{equation}
\cin(M\times_{f, Z, g}N) = \cin(M){\otimes_\infty}_{f^*, \cin(Z), g^*} \cin(N).
\end{equation} 
It follows that 
\[
\cin(U\times_{\pr, \R^n, \Psi} U)  = \cin(U\times U) /\langle \imath_1
(\pr^*f) -\imath_2 (\Psi^* f) \mid f\in \cin(\R^n)\rangle,
\]  
where the maps
\[
  \imath_1, \imath_2 : \cin(U) \to \cin(U\times U)
\]
are induced by the projections $\pi_1, \pi_2: U\times U\to U$. That is,
\[
  \imath_j = {\pi_j}^*, \qquad j=1, 2.
\]
Note an unfortunate clash of notation:
\[
  \pi_1((p_2, t_2), (p_1, t_1)) = (p_2, t_2)\qquad \textrm{ and }\qquad 
  \pi_2((p_2, t_2), (p_1, t_1)) = (p_1, t_1).
\]  
Let
\[
\scK := \langle \imath_1
(\pr^*f) -\imath_2 (\Psi^* f) \mid f\in \cin(\R^n)\rangle.
\]
Then
\[
\cin(U\times_{\pr, \R^n, \Psi} U) = \cin(U\times U)/\scK.
  \]
Now consider the ideal $\scJ \subset \cin(U\times U)$ given by
\[
\scJ: =\langle \imath_1(\scI) \cup \imath_2 (\scI) \cup \scK \rangle. 
\]
The maps $\imath_1, \imath_2$ induce the maps
\[
  \overline{\imath_1},
  \overline{\imath_2} : \scA \to \cin(U\times U)/\scJ.
\]  
Moreover for any $f+J\in \scA$
\[
\overline{\imath_1} (\overline{\Psi^*}(f+J)) - \overline{\imath_2}
(\overline{\pr^*} (f+J)) = \imath_1 \circ \Psi^* (f) - \imath_2 \circ
\pr^* ( f ) + \scJ = 0 +\scJ.
\]
Hence the diagram
\[
  \xy
(-18,16)*+{\cin(\R^n)/J }="1";
(18,16)*+{\cin(U)/\scI}="2";
(-18, -6)*+{\cin(U)/\scI}="3";
 (18, -6)*+{\cin(U\times U)/\scJ}="4";
{\ar@{->}_{\overline{\pr^*}} "1";"3"};
{\ar@{->}^{\overline{\imath}_2} "2";"4"};
{\ar@{->}^{\overline{\Psi^*}} "1";"2"};
{\ar@{->}_{\overline{\imath}_1} "3";"4"};
\endxy
\]
commutes.  Note that $\scB  = \cin(U)/\scI \to \cin(U\times U)/\scJ
\leftarrow \cin(U)/\scI= \scB$ is the pushout $\scB
{\otimes_\infty}_\scA \scB$.  This is because it has the appropriate
universal property.

We now argue that
\[
  m^*: \cin(U)\to \cin(U \times _{\R^n} U)  = \cin(U\times
  U)/\scK
\]
induces
\[
  \overline{m^*} : \scB= \cin(U)/\scI \to \cin(U\times U)/ \scJ = \scB
  \otimes_{\infty, \scA} \scB
\]
that is, that $m^* (\scI) \subset \scJ$.

Recall that $((p_2, t_2), (p_1, t_1))\in U\times_{\pr,\R^n, \Psi}
U\subset U\times U$ if and only if $p_2 = \Psi(p_1, t)$. Given any
function $f\in \cin(\R^n)$ and $((p_2, t_2), (p_1, t_1))\in U\times_{\R^n}U$
\[
m^*(\pr^*f) ((p_2, t_2), (p_1, t_1)) = f (\pr(p_1, t_1+t_2) ) = f(p_1).
\]  
On the other hand
\[
\imath_2 (\pr^*f) ((p_2, t_2), (p_1, t_1)) = (\pr^*f)(\pi_2 (((p_2,
t_2), (p_1, t_1)) ) = f(\pr(p_1, t_1) ) = f(p_1).
\]  
Hence
\[
m^*(\pr^*f) = \imath_2 (\pr^*f) \qquad \textrm{ for all } f\in \cin(\R^n).
\]
Therefore
\begin{equation} \label{eq:6.4}
m^* (\pr^*J) \subset \imath_2(\pr^*J) \subset \imath_2 (\scI).
\end{equation}
Similarly given any
function $f\in \cin(\R^n)$ and $((p_2, t_2), (p_1, t_1))\in
U\times_{\R^n}U$
\[
\begin{split}
  m^*(\Psi^*f) ((p_2, t_2), (p_1, t_1)) &= f (\Psi(p_1, t_1+t_2)\\
  &= f(\Psi( \Psi(p_1, t_1), t_2) \quad \textrm{ (since $\Psi$ is a
    flow)}\\
  &= f(\Psi(p_2, t_2)) \qquad \quad \textrm{ (since $p_2 = \Psi(p_1, t_1)$ )}.
\end{split}
\]  
On the other hand
\[
\imath_1 (\Psi^*f)  ((p_2, t_2), (p_1, t_1)) = (\Psi^*f)( \pi_1
((p_2, t_2), (p_1, t_1)) )
= f (\Psi (p_2, t_2)).
\]
Hence
\[
m^*(\Psi^*f) = \imath_1 (\Psi^*f) \qquad \textrm{ for all } f\in \cin(\R^n).
\]
Therefore
\begin{equation} \label{eq:6.5}
m^* (\Psi^*J) \subset \imath_1(\Psi^*J) \subset \imath_1 (\scI).
\end{equation}
Equations \eqref{eq:6.4} and \eqref{eq:6.5} now imply that
\[
m^* \scI = m^* (\langle \pr^*J\rangle + \langle \Psi^*J\rangle)
\subset \langle (\imath_2 (\scI))\rangle + \langle (\imath_1
(\scI))\rangle \subset \scJ.
\]
This finishes the verification that $m^*: \cin(U) \to
\cin(U\times_{\R^n} U) $ induces $\overline{m^*} : \scB \to \scB
\otimes_{\infty, \scA} \scB$.

It remains to check that the maps $\overline{m^*}, \overline{\pr^*},
\overline{\Psi^*}$ and $\overline{u^*}$ define a groupoid in
$\op{\cring}$.  We omit the computation.
\end{proof}

\begin{remark}
Note that Theorem~\ref{thm:groupoid} {\em does not } claim that the
groupoid in question comes from an action of $\R = \Spec(\cin(\R))$ on
$\Spec{\scA}$. In particular it does not even claim that $\Spec(\scB) =
\Spec (\scA) \times (\R, \cin_\R)$.  Thus it still remains to be
determined if a complete vector field on $\Spec(\scA)$ gives rise to
an acton of $\R =\Spec(\cin(\R))$.
\end{remark}  

\mbox{}\\
\appendix

\section{Review of $\cin$-rings, modules and $\cin$-derivations} \label{app}

In this section we rapidly review the facts we need about
$\cin$-rings, modules over $\cin$-rings, $\cin$-derivations, local
$\cin$-rings and local $\cin$-ringed spaces. 
Our main references are \cite{MR} and \cite{Joy}.   

\begin{definition}[The category $\Euc$ of Euclidean (a.k.a. Cartesian)
  spaces]
  The objects of the category $\Euc$ are the coordinate vector spaces
  $\R^n$, $n\geq 0$,  thought of as $\cin$-manifolds.  The morphisms are
  $C^\infty$ maps.
\end{definition}

\begin{remark}
Note that all objects of $\Euc$ are finite powers of one object:
\[
\R^n = (\R^1)^n.
\]
\end{remark}

\begin{definition}\label{def:C2}
A {\sf $C^\infty$-ring} $\cC$ is a functor
\[
\cC:\Euc \to \Set
\]
from the category $\Euc$ of Euclidean spaces to the category $\Set$ of
sets that preserves finite products.
\end{definition}

\begin{remark} \label{rmrk:2.4}
Definition~\ref{def:C2} says that given a $C^\infty$ ring $\cC$ we have:
\begin{enumerate}
\item For any $n\geq 0$, $\cC(\R^n)$ is an $n$-fold product of the set
  $\scC: = \cC(\R^1)$.  This means, in particular, that 
\[
\cC(\R^n\xrightarrow{x_i} \R) =\scC\, ^n \xrightarrow{ pr_i} \scC,
\]
where $x_i$ is the $i$th coordinate function and $pr_i$ is the
projection on the $i$th factor.
\item For any smooth function $g= (g_1,\ldots, g_m):\R^n \to \R^m$ we
  have a map of sets
\[
\cC(g): \cC(\R^n) = \scC^n \to \scC^m = \cC(\R^m)
\]
with
\[
pr_j \circ \cC (g) = \cC(g_j).
\]
\item For any triple of smooth functions $\R^n\xrightarrow{g} \R^m
  \xrightarrow{f} \R$ we have three maps of sets $\cC(g)$, $\cC(f)$,
  $\cC(f\circ g) $ with
\[
\cC(f) \circ \cC(g) = \cC(f\circ g).
\]
\item $\cC(\R^0)$ is a single point set $\{*\}$.  This is because by definition
  product-preserving functors take terminal objects to terminal objects.
\end{enumerate}
\end{remark}

It follows from Remark~\ref{rmrk:2.4} that we could have equivalently
defined a $\cin$-ring as a set $\scC$ together with an infinite
collection $n$-ary operations, $n=0, 1, \ldots$, one operation
$f_\scC:\scC^n\to \scC$ for each $f\in \cin(\R^n, \R)$.  Thus we have
an equivalent definition:

\begin{definition}\label{def:cring1}
A {\sf $C^\infty$-ring} is a (nonempty) {\em set} $\scC$ together with operations 
\[
g_\scC:\scC^m\to \scC
\]
for all $m$ and all $g\in C^\infty (\R^m)$
such that for all
$n,m\geq 0$, %
 all $g\in C^\infty(\R^m)$ and all
$f_1, \ldots, f_m\in C^\infty (\R^n)$
\begin{equation} \label{eq:2.assoc}
({g\circ(f_1,\ldots, f_m)})_\scC (c_1,\ldots, c_n) =
g_\scC({(f_1)}_\scC(c_1,\ldots, c_n), \ldots, {(f_m)}_\scC(c_1,\ldots, c_n))
\end{equation}
for all $(c_1, \ldots, c_n) \in \scC^n$.
Additionally we require that for every $m >0$ and for every coordinate
function $x_j:\R^m\to \R$, $1\leq j\leq m$, 
\begin{equation}\label{eq:2.projections}
(x_j)_\scC (c_1,\ldots, c_m) = c_j.
\end{equation}
\end{definition}

It is customary not to distinguish between $\cin$-rings viewed as product
preserving $\Set$-valued functors and $\cin$-rings viewed as sets with
operations; cf.\ \cite{MR, Joy}.

\begin{remark}
A nullary operation on a $\cin$-ring $\scC$ is a map $c_\scC
:\scC^0\equiv *\to \scC$, where $c:\R^0=\{0\}\to \R^1$ is a function, i.e.,
an element $c(0)$ of $\R^1$.  Equivalently  for every real number $c\in \R$ we
have an element $c_\scC \in \scC$.  In particular we have $0_\scC$ and
$1_\scC$ in $\scC$.
\end{remark}

\begin{definition}
A {\sf map} ({\sf morphism}) from a $\cin$-ring $\cC: \Euc \to \Set$
to a $\cin$-ring $\cB:\Euc\to \Set$ is a natural transformation $\varphi:
\cC\Rightarrow \cB$.

Equivalently if we think of $\cin$-rings a sets with operations, a map
of $\cin $-rings is a map of sets that preserves all the operations.
\end{definition}   

\begin{remark} \label{rmrk:a8}
Any nonzero  $\cin$-ring $\scC$ has an underlying unital
  $\R$-algebra.  This is because to define an $\R$-algebra structure
  we need scalars, addition and multiplication (plus various
  compatibility and associativity conditions). We have already
  explained that scalars are nullary operations.

  Since $g(x,y) = xy \in
\cin(\R^2)$, we have a multiplication map $g_\scC:\scC\times \scC \to
\scC$.   We write $a_1a_2$ for $g_\scC(a_1, a_2)$.   With
this notation it is not hard to show that
\[
0_\scC a = 0_\scC\qquad \textrm{ and } 1_\scC a = a
\]  
for all $a\in \scC$.  Consequently if $0_\scC = 1_\scC$ then $\scC
=\{0_\scC\}$.  Otherwise the $\cin$ ring $\scC$ contains a copy of the
reals $\R$, which we think of as the set of scalars.
The addition is the operation defined by the function
  $f(x,y) = x+y \in \cin(\R^2)$ which we simply denote by $+:
  \scC\times \scC\to \scC$.

{\sf We will not notationally distinguish between $\cin$-rings and their
underlying $\R$-algebras.  }
\end{remark}

\begin{notation}[The category $\cring$ of $\cin$-rings] The composite
  of two morphisms of $\cin$-rings is again a morphism of
  $\cin$-rings.  Consequently $\cin$-rings form a category that we
  denote by $\cring$.
\end{notation}    

\begin{remark}
The category of $\cin$-rings is complete and co-complete for category
 theoretic reasons \cite [Theorem~3.4.5]{Borceux}. Here is, roughly,
 the argument. Product
 preserving functors commute with limits and the category $\Set$ of
 sets is complete.  Consequently $\cring$ is complete and the limits
 are computed ``object-wise.''   The co-completeness follows from the
 fact that $\cring$ is a reflective subcategory of the functor
 category $[\Euc, \Set]$ and that the functor category $[\Euc, \Set]$
 is co-complete.\\[4pt]
\end{remark}

\begin{remark}
There is an evident notion of a $\cin$-subring of a $\cin$-ring.  Such
an object can be defined as a subfunctor (if we think of $\cin$-rings
as product preserving functors).  This translates into  a subset
closed under all the operations when we think of $\cin$-rings as sets
with operations.
\end{remark}  
 The following well-known lemma is very useful for dealing with
 $\cin$-rings.
\begin{lemma}[Hadamard's lemma]\label{lem:Had}
For any smooth function $f:\R^n \to \R$  there exist smooth functions $g_1,\ldots, g_n \in C^\infty (\R^{2n}) $ so that
\begin{equation} \label{eq:Had}
f(x) - f(y) = \sum_{i=1}^n (x_i - y_i) g_i(x,y)
\end{equation}
 for any pair of points $x,y\in \R^n$.  
\end{lemma}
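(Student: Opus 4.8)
The plan is to integrate the derivative of $f$ along the straight line segment joining $y$ to $x$ and to read off the functions $g_i$ from the fundamental theorem of calculus. Fix $x,y\in\R^n$ and consider the auxiliary function $\phi(t):=f\bigl(y+t(x-y)\bigr)$ for $t\in[0,1]$. Since $f$ is smooth and $t\mapsto y+t(x-y)$ is an affine (hence smooth) path, $\phi$ is a smooth function of $t$ with $\phi(0)=f(y)$ and $\phi(1)=f(x)$. The fundamental theorem of calculus then gives
\[
f(x)-f(y)=\phi(1)-\phi(0)=\int_0^1\phi'(t)\,dt,
\]
and by the chain rule
\[
\phi'(t)=\sum_{i=1}^n\frac{\partial f}{\partial x_i}\bigl(y+t(x-y)\bigr)\,(x_i-y_i).
\]

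The second step is to pull the factors $(x_i-y_i)$, which are independent of the integration variable $t$, outside the integral. Combining the two displays above yields
\[
f(x)-f(y)=\sum_{i=1}^n(x_i-y_i)\int_0^1\frac{\partial f}{\partial x_i}\bigl(y+t(x-y)\bigr)\,dt,
\]
so the natural candidate is
\[
g_i(x,y):=\int_0^1\frac{\partial f}{\partial x_i}\bigl(y+t(x-y)\bigr)\,dt.
\]
With this definition the desired identity \eqref{eq:Had} holds for every pair of points $x,y\in\R^n$.

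The remaining and only substantive step is to verify that each $g_i$ lies in $\cin(\R^{2n})$. I would argue this by differentiation under the integral sign. The integrand $(x,y,t)\mapsto\frac{\partial f}{\partial x_i}(y+t(x-y))$ is smooth on $\R^{2n}\times\R$, being a composition of the smooth map $\partial f/\partial x_i$ with the smooth map $(x,y,t)\mapsto y+t(x-y)$. Since the domain of integration $[0,1]$ is compact, all partial derivatives of the integrand in the variables $(x,y)$ are jointly continuous and uniformly bounded on compact subsets of $\R^{2n}\times[0,1]$; this justifies interchanging differentiation in $(x,y)$ with integration in $t$ to any order. It follows inductively that $g_i$ is smooth and that its partial derivatives are obtained by differentiating under the integral sign, so $g_i\in\cin(\R^{2n})$ as required.

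\textbf{Expected main obstacle.} The only point requiring care is the smoothness of the $g_i$; the algebraic identity itself is immediate from the fundamental theorem of calculus. The justification for differentiating under the integral sign rests on the compactness of $[0,1]$ together with the smoothness of the integrand in all variables, which supplies the local uniform bounds needed to apply the standard differentiation-under-the-integral theorem. No further difficulty is anticipated.
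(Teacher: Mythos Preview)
Your proof is correct and is the standard argument for Hadamard's lemma. The paper itself does not give a proof but simply refers the reader to \cite{MR} or \cite{Jet}, so there is no alternative approach to compare against; the argument you wrote is exactly the one found in those references.
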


\begin{proof}  See \cite{MR} or \cite{Jet}.
\end{proof}

\begin{definition}
An {\em ideal} in a $\cin$-ring $\scC$ is an ideal in an underlying
$\R$-algebra $\scC$.
\end{definition}
An important consequence of Hadamard's lemma is
\begin{lemma}\label{lem:2.13}
Let $I$ be an ideal in a $\cin$-ring $\scC$. Then the quotient
$\scC/I$ $\R$-algebra is naturally a $\cin$-ring: for any $n\geq 0$,
$F\in \cin(\R^n)$
\[
f_{\scC/I}:(\scC/I)^n\to \scC/I, \qquad f_{\scC/I}(a_1+I, \ldots, a_n
+I) = f_\scC(a_1,\ldots, a_n) +I\qquad a_1, \ldots, a_n \in \scC
\]
is a well-defined operation. Moreover the quotient map $\pi: \scC\to
\scC/I$ is a map of $\cin$-rings.
 \end{lemma}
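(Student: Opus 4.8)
The plan is to reduce everything to the single substantive point, namely that the formula defining $f_{\scC/I}$ does not depend on the choice of coset representatives; the remaining verifications are purely formal. First I would record the principle that identities among smooth functions pass to $\cin$-rings: if $F,G\in\cin(\R^m)$ are equal as functions, then $F_\scC=G_\scC$ as operations $\scC^m\to\scC$ for every $\cin$-ring $\scC$, which is immediate from Definition~\ref{def:cring1}. Combined with the composition axiom \eqref{eq:2.assoc}, this lets me rewrite, for any smooth $F=(f_1,\dots,f_m):\R^n\to\R^m$ and $g\in\cin(\R^m)$, the operation $(g\circ F)_\scC$ as $g_\scC\circ F_\scC$; in particular the $\scC$-operation attached to $(x,y)\mapsto f(x)$ on $\scC^{2n}$ is $f_\scC$ applied to the first $n$ coordinates.

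For well-definedness I would argue as follows. Suppose $a_i,b_i\in\scC$ with $a_i-b_i\in I$ for $1\le i\le n$ (here $-$ is the operation attached to $(u,v)\mapsto u-v$, cf.\ Remark~\ref{rmrk:a8}). Given $f\in\cin(\R^n)$, Hadamard's lemma (Lemma~\ref{lem:Had}) produces $g_1,\dots,g_n\in\cin(\R^{2n})$ with
\[
f(x)-f(y)=\sum_{i=1}^n (x_i-y_i)\,g_i(x,y)
\]
as an identity in $\cin(\R^{2n})$. Applying $(-)_\scC$ to the tuple $(a_1,\dots,a_n,b_1,\dots,b_n)\in\scC^{2n}$ and using the two remarks above to unwind both sides gives
\[
f_\scC(a_1,\dots,a_n)-f_\scC(b_1,\dots,b_n)=\sum_{i=1}^n (a_i-b_i)\,(g_i)_\scC(a_1,\dots,a_n,b_1,\dots,b_n).
\]
Since $I$ is an ideal of the underlying $\R$-algebra of $\scC$ and each $a_i-b_i\in I$, the right-hand side lies in $I$, hence so does the left-hand side; therefore $f_\scC(a_1,\dots,a_n)+I$ depends only on the cosets $a_i+I$, and $f_{\scC/I}$ is a well-defined map $(\scC/I)^n\to\scC/I$.

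It then remains to check the $\cin$-ring axioms for $\scC/I$ and that $\pi$ is a morphism, all of which I would obtain by reducing the corresponding statements for $\scC$ modulo $I$: the projection axiom \eqref{eq:2.projections} gives $(x_j)_{\scC/I}(a_1+I,\dots,a_m+I)=a_j+I$; the compatibility \eqref{eq:2.assoc} for $\scC/I$ is \eqref{eq:2.assoc} for $\scC$ read in the quotient; $\scC/I$ is nonempty since it contains $0_\scC+I$; and $\pi(f_\scC(a_1,\dots,a_n))=f_\scC(a_1,\dots,a_n)+I=f_{\scC/I}(\pi(a_1),\dots,\pi(a_n))$ expresses that $\pi$ preserves every operation. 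The only step with real content is the well-definedness step, and Hadamard's lemma is precisely what makes it go through; the rest is bookkeeping.
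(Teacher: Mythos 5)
Your proof is correct, and it follows exactly the route the paper intends: the paper itself offers no proof beyond citing Moerdijk--Reyes, but it introduces the lemma as ``an important consequence of Hadamard's lemma,'' and your well-definedness argument via the Hadamard decomposition $f(x)-f(y)=\sum_i(x_i-y_i)g_i(x,y)$, applied through the $\cin$-operations and the ideal property of $I$, is precisely that standard argument. The remaining verifications (the axioms \eqref{eq:2.assoc}, \eqref{eq:2.projections} for $\scC/I$ and that $\pi$ preserves all operations) are indeed formal, as you say.
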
  
\begin{proof}
See \cite{MR}.
\end{proof}  

\begin{remark}
For any manifold $M$ the algebra $\cin(M)$ of smooth functions is
naturally a $\cin$-ring: given $f\in \cin(\R^n)$ the corresponding $n$-ary
operation
\[
f_{\cin(M)} :\cin(M)^n\to \cin(M)
\]
is given by
\[
f_{\cin_M  } (a_1, \ldots, a_n) := f\circ (a_1,\ldots, a_n) \quad \textrm{ for all 
$a_1, \ldots, a_n\in \cin(M)$}.
\]
In particular $\cin(\R^n)$ is a $\cin$-ring for any $n\geq 0$.  In
fact $\cin(\R^n)$ is a {\sf  free} $\cin$-ring on the generators
$x_1,\ldots, x_n: \R^n\to \R$ and $\R = \cin(\R^0)$ is a free
$\cin$-ring on the empty set of generators.
\end{remark}

\begin{example} \label{ex:2.16}
Let $M$ be a manifold and $Z\subset M$ a closed subset.  Recall that a
Whitney smooth function on $Z$ is the restriction of a function $f\in
\cin(M)$ to $Z$.   We write $\cin(Z)$ for the set of all Whitney
smooth functions on $Z$.  It is a standard fact that $\cin(Z)$ is an
$\R$-algebra which is the quotient of $\cin(M)$ by the ideal of smooth
functions that vanish on $Z$.  By Lemma~\ref{lem:2.13} the
$\R$-algebra $\cin(Z)$ is a $\cin$-ring.    The fact that $\cin(Z)$ is
a $\cin$-ring can also be shown directly.
\end{example}

\begin{definition} \label{def:A.17}
An {\sf $\R$-point} of a $\cin$-ring $\scA$ is a map of $\cin$-rings $p:\scA
\to \R$.
\end{definition}

\begin{remark} \label{rmrk:Pursell} An $\R$-point of a $\cin$-ring is also a map of $\R$-algebras. The converse is true as well: a map of
  $\R$-algebras $p:\scA\to \R$ is a map of $\cin$-rings.   See
  \cite[Proposition 3.6 (b), p.~33]{MR}.  The proof relies on the so
  called  Milnor's exercise (which appears to be a result of Pursell
  --- see \cite[Chapter 8]{Pursell}): for any (second countable Hausdorff) manifold $M$
  and any $\R$-algebra map $\varphi: \cin(M)\to \R$ there is a point
  $p\in M$ so that $\varphi(f) = f(p)$ for all $f\in \cin(M)$.

In particular for any (second countable Hausdorff) manifold $M$, the
set of $\R$-points of $\cin(M)$ is in bijection with the elements of
the set $M$.  
 \end{remark}

\begin{definition}\label{def:loc_ring}
  A $\cin$-ring $\scA$ is {\sf local} if  it has exactly one $\R$-point.
 Equivalently $\scA$ is local if it has a unique maximal
ideal $\fm$ and  the quotient $\scA/\fm$ is isomorphic to $\R$ (as
$\cin$-rings).
\end{definition}

\begin{remark}
  Moerdijk and Reyes call our local $\cin$-rings {\em pointed} local
  $\cin$-rings.  
\end{remark}

\begin{remark} \label{rmrk:no-R-points}
  The zero $\cin$-ring has no $\R$-points.

There are also nonzero $\cin$-rings with no $\R$-points.  A simple
example is the quotient $\cin(\R^n)/\cin_c(\R^n)$ where $\cin_c(\R^n)$
is the ideal of the compactly supported functions.

Let $M$ be a maximal ideal of $\cin(\R^n)$ containing $\cin_c(\R^n)$.
Then $\cin(\R^n)/M$ is a field, hence $0$ is a unique maximal ideal in
$\cin(\R^n)/M$ but $\cin(\R^n)/M$ has no $\R$-points, hence is not
local in the sence of Definition~\ref{def:loc_ring}.
\end{remark}
  
\begin{lemma} \label{lem:2.20}
Let $\scC$, $\scB$ be two local $\cin$-rings and $\varphi:\scC\to
\scB$ a map of $\cin$ rings.  Then $\varphi$ maps the unique maximal
ideal of $\scC$ to the maximal ideal of $\scB$.
\end{lemma}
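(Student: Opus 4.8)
The plan is to deduce this purely from the characterization of a local $\cin$-ring as a (nonzero) $\cin$-ring with exactly one $\R$-point (Definition~\ref{def:loc_ring}). Write $\fm_\scC$ for the unique maximal ideal of $\scC$; since $\scC/\fm_\scC \cong \R$, the quotient map composed with this isomorphism is an $\R$-point $p_\scC \colon \scC \to \R$ with $\ker p_\scC = \fm_\scC$, and it is the \emph{only} $\R$-point of $\scC$. Likewise let $\fm_\scB$ be the maximal ideal of $\scB$ and $p_\scB \colon \scB \to \R$ its unique $\R$-point, so $\ker p_\scB = \fm_\scB$.

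First I would note that $p_\scB \circ \varphi \colon \scC \to \R$ is a composite of morphisms of $\cin$-rings, hence again a morphism of $\cin$-rings, i.e. an $\R$-point of $\scC$. By uniqueness of the $\R$-point of the local $\cin$-ring $\scC$, this forces
\[
p_\scB \circ \varphi = p_\scC .
\]

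The conclusion would then be immediate: given $c \in \fm_\scC = \ker p_\scC$, one has $p_\scB(\varphi(c)) = (p_\scB \circ \varphi)(c) = p_\scC(c) = 0$, so $\varphi(c) \in \ker p_\scB = \fm_\scB$; hence $\varphi(\fm_\scC) \subseteq \fm_\scB$. (In fact the same computation gives the sharper statement $\varphi^{-1}(\fm_\scB) = \ker(p_\scB \circ \varphi) = \ker p_\scC = \fm_\scC$.)

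I do not expect any real obstacle here. The one point to be careful about is to use the ``unique $\R$-point'' formulation of locality rather than trying to track maximal ideals directly: a morphism of $\cin$-rings need not pull back a maximal ideal to a maximal ideal, and not every maximal ideal is the kernel of an $\R$-point (cf.\ Remark~\ref{rmrk:no-R-points}), so the argument genuinely relies on the extra rigidity that local $\cin$-rings in the sense of this paper carry, namely that their maximal ideal is precisely the kernel of the unique $\R$-point.
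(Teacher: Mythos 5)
Your proof is correct and is essentially the paper's own argument: the paper likewise composes $\varphi$ with the unique $\R$-point $p_\scB$ of $\scB$ and invokes uniqueness of the $\R$-point of $\scC$ to get $p_\scB\circ\varphi=p_\scC$, leaving implicit the identification of each maximal ideal with the kernel of the unique $\R$-point. You have simply made that last step (and the sharper observation $\varphi^{-1}(\fm_\scB)=\fm_\scC$) explicit, which matches the intended reasoning.
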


\begin{proof} Let $p_\scC:\scC\to \R$, $p_\scB:\scB\to \R$ denote the
  unique $\R$-points.   Then $p_\scB\circ \varphi: \scC\to \R$ is an
  $\R$-point.  Since $p_\scC$ is unique
\[
p_\scB\circ \varphi = p_\scC.
\]
\end{proof}

\begin{definition} (\cite[p.\ 44]{MR}) \label{def:germ_determined} A $\cin$-ring $\scA$ is {\sf
    germ determined} if there is an injective map from $\scA$ into a
  product of local $\cin$-rings. 
\end{definition}
It is convenient to have a condition on $\cin$-ring$\scA$ being germ
determined which is expressed in terms of its localizations
$\{\scA_x\}_{x\in X_\scA}$, see Notation~\ref{not:2.22}.

\begin{lemma} \label{lem:A.24}
Given a  $\cin$-ring $\scA$ consider  the map
\[
  \pi:\scA\to \prod_{x \in X_\scA} \scA_x, \qquad \pi(x) :=
  (\pi_x)_{x \in X_\scA}
\]
induced by the set $\{\pi_x:\scA\to \scA_x\}_{x \in X_\scA}$ of all
possible localization maps (here as before $X_\scA: = \Hom(\scA, \R)$,
the set of all $\R$-points of $\scA$). The ring $\scA$ is germ
determined if and only if the map $\pi$ above is injective.
Equivalently $\scA$ is germ determined if and only if
$\bigcap_{x\in X_\scA }I_x = 0$.  Here as before
$I_x =\ker( \pi_x:\scA \to \scA_x)$.
\end{lemma}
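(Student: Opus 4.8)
The plan is to prove the biconditional ``$\scA$ is germ determined $\iff$ $\pi$ is injective'' in its two directions, and then to read off the third formulation from the tautology $\ker\pi=\bigcap_{x\in X_\scA}\ker\pi_x=\bigcap_{x\in X_\scA}I_x$, so that $\pi$ is injective precisely when this intersection vanishes.

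For the direction ``$\pi$ injective $\Rightarrow$ germ determined'', the point is simply that each localization $\scA_x$ is a local $\cin$-ring (as shown in the remark following Notation~\ref{not:2.22}). Hence $\prod_{x\in X_\scA}\scA_x$ is a product of local $\cin$-rings, and $\pi$ is a map of $\cin$-rings into it which is injective by hypothesis; so Definition~\ref{def:germ_determined} immediately gives that $\scA$ is germ determined.

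For the converse, assume $\scA$ is germ determined and fix an injective $\cin$-ring map $\iota=(\iota_i)_{i\in I}\colon\scA\hookrightarrow\prod_{i\in I}\scL_i$ with each $\scL_i$ local, where $\iota_i=\mathrm{pr}_i\circ\iota\colon\scA\to\scL_i$. I will show $\ker\pi=0$. Let $a\in\scA$ satisfy $\pi_x(a)=0$ for every $\R$-point $x\in X_\scA$; it suffices to prove $\iota_i(a)=0$ for each $i$, since then $\iota(a)=0$ and injectivity of $\iota$ forces $a=0$. Fix $i$, let $p_i\colon\scL_i\to\R$ be the unique $\R$-point of the local $\cin$-ring $\scL_i$, and set $x_i:=p_i\circ\iota_i\colon\scA\to\R$, which is an $\R$-point of $\scA$. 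Since $a\in I_{x_i}$, the explicit description \eqref{eq:5.13} of $I_{x_i}$ yields $d\in\scA$ with $x_i(d)\neq0$ and $ad=0$ in $\scA$. Applying the ring homomorphism $\iota_i$ gives $\iota_i(a)\,\iota_i(d)=0$ in $\scL_i$. But $p_i(\iota_i(d))=x_i(d)\neq0$, so $\iota_i(d)$ lies outside $\ker p_i$; since $\ker p_i$ is the unique maximal ideal of $\scL_i$, the element $\iota_i(d)$ is a unit, and multiplying by its inverse gives $\iota_i(a)=0$.

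The only step that requires a moment's thought is the passage ``$\iota_i(d)\notin\ker p_i\Rightarrow\iota_i(d)$ is a unit'', which is the standard commutative-algebra fact that in a ring with a unique maximal ideal the non-units are exactly the elements of that ideal, applied to the underlying $\R$-algebra of the local $\cin$-ring $\scL_i$ (Remark~\ref{rmrk:a8}, Definition~\ref{def:loc_ring}). I do not anticipate any genuine obstacle here; the remainder is bookkeeping with the definition of $I_x$ in \eqref{eq:5.13} and the definition of germ determined.
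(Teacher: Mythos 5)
Your proof is correct and follows essentially the same route as the paper's: the easy direction is identical (each $\scA_x$ is local, so injectivity of $\pi$ gives germ determinacy by definition), and for the converse you, exactly as in the paper, compose the given injection into a product of local rings with the unique $\R$-points $p_i$ of the factors to get $\R$-points $x_i=p_i\circ\iota_i$ of $\scA$, and use that elements of a local $\cin$-ring outside $\ker p_i$ are units, concluding $\ker\pi\subseteq\ker\iota=0$. The only minor difference is in how that last containment is obtained: you invoke the explicit description \eqref{eq:5.13} of $I_{x_i}$ and cancel the unit $\iota_i(d)$ directly, whereas the paper factors $\iota_i$ through the localization $\pi_{x_i}$ via its universal property to get $\ker\pi_{x_i}\subseteq\ker\iota_i$; both arguments are sound and rest on the same unit observation.
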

\begin{proof}
($\Rightarrow$) Clear since each $\scA_x$ is a local ring.\\[3pt]
($\Leftarrow$)  Let $f:\scA \to \prod_{j\in J} \scS_j$ be an injective
map into a product of local $\cin$-rings.  We want to show that $
\pi:\scA\to \prod_{x \in X_\scA} \scA_x$ is injective.   Note
that
\[
  \ker(\pi) = \bigcap _{x\in X_\scA} \ker(\pi_x)
\]  and that
similarly
\[
\ker (f) = \bigcap_{j\in J}\ker(f_j).
\]
To prove injectivity of $\pi$ it is enough to show that $\ker(\pi)
\subseteq \ker(f) = 0$.

If $\scS$ is a local $\cin$-ring and $p:\scS \to \R$
is the unique $\R$-point, then $\scS\smallsetminus \ker(p)$ is a set
of units of $\scS$.  This is because if $s\in \scS$ is not a unit,
then it belongs to a maximal ideal, and there is only one maximal
ideal, namely $\ker(p)$.

For every $j\in J$ let $p_j:\scS_j\to \R$ denote the unique
$\R$-point.  Then $p_j\circ f_j:\scA\to \R$ is an $\R$-point of
$\scA$.  If $0 \not = (p_j \circ f_j) (a)$ then $p_j (f_j(a)) \not =
0$, hence $f_j(a) $ is a unit in $\scS_j$.  By the universal property
of the localization 
$\pi_{p_j\circ f_j}:\scA \to \scA_{p_j\circ f_j}$ the map $f_j$
factors through $\pi_{p_j\circ f_j}$: there exists a unique
$\bar{f}_j: \scA_{p_j\circ f_j} \to \scS_j$ so that
\[
f_j = \bar{f}_j \circ \pi_{p_j\circ f_j}.
\]  
Hence $\ker( \pi_{p_j\circ f_j}) \subset \ker(f_j)$ for all $j$.
Therefore
\[
\ker (\pi) = \bigcap _{x\in X_\scA} \ker(\pi_x) \subseteq \bigcap _{j \in J}
\ker(\pi_{p_j\circ f_j}) \subseteq \bigcap_{j\in J} \ker (f_j)  = \ker
(f)
\]
and we are done.
\end{proof}

\begin{corollary} \label{cor:A.25}
For any $\cin$-ring $\scA$ the ring $\scB: = \scA/ \bigcap_{x\in
  X_\scA} I_x$ is germ determined.  (As before $I_x =\ker(
\pi_x:\scA \to \scA_x)$.)
\end{corollary}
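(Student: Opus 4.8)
The plan is to verify the criterion of Lemma~\ref{lem:A.24} for the ring $\scB$. Set $N := \bigcap_{x \in X_\scA} I_x$ and let $q: \scA \to \scB = \scA/N$ be the quotient map. Since $x = \bar x \circ \pi_x$ for every $\R$-point $x: \scA \to \R$, we have $N \subseteq I_x = \ker \pi_x \subseteq \ker x$, so every $x \in X_\scA$ descends uniquely through $q$ to an $\R$-point $\bar x: \scB \to \R$; as $q$ is surjective, $x \mapsto \bar x$ is a bijection $X_\scA \xrightarrow{\sim} X_\scB$, and under it the multiplicative set $\{\bar x \neq 0\} \subseteq \scB$ is the image $q(\{x \neq 0\})$.

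First I would identify the localization of $\scB$ at $\{\bar x \neq 0\}$ with $\scA_x$. The localization map $\pi_x: \scA \to \scA_x$ inverts $\{x \neq 0\}$ and kills $N$ (because $N \subseteq I_x = \ker \pi_x$), hence factors as $\pi_x = g_x \circ q$ for a unique map $g_x: \scB \to \scA_x$, and $g_x$ inverts $\{\bar x \neq 0\}$. Comparing universal properties (Lemma~\ref{lem:2.21}), and using surjectivity of $q$ to test equality of maps out of $\scB$ by precomposition with $q$, one checks that $g_x$ realizes $\scA_x$ as the localization $\scB_{\bar x}$. Therefore the localization map $\scB \to \scB_{\bar x}$ is, up to this isomorphism, $g_x$, and its kernel is $q(\ker \pi_x) = q(I_x) = I_x/N$, the last two equalities because $q$ is onto with $\ker q = N \subseteq I_x$.

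Putting this together,
\[
\bigcap_{y \in X_\scB} \ker\bigl(\scB \to \scB_y\bigr) \;=\; \bigcap_{x \in X_\scA} I_x/N \;=\; \Bigl(\bigcap_{x \in X_\scA} I_x\Bigr)\big/ N \;=\; N/N \;=\; 0,
\]
so $\scB$ is germ determined by Lemma~\ref{lem:A.24}. (If $X_\scA = \varnothing$ the formula degenerates to $\scB = \scA/\scA = 0$, which is germ determined, in agreement with the criterion.)

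The only point needing care is the identification $\scB_{\bar x} \cong \scA_x$. One can sidestep it entirely by a direct element chase: if $a + N$ lies in $\ker(\scB \to \scB_{\bar x})$ for every $x$, then by the kernel formula \eqref{eq:5.13} applied to $\scB$ there is, for each $x$, some $d \in \scA$ with $x(d) \neq 0$ and $ad \in N \subseteq I_x$; applying \eqref{eq:5.13} to $\scA$ yields $e$ with $x(e) \neq 0$ and $(ad)e = 0$, so $a \in I_x$ since $x(de) = x(d)x(e) \neq 0$; as $x$ is arbitrary, $a \in \bigcap_x I_x = N$, i.e.\ $a + N = 0$. Either way the whole intersection is $0$ and Lemma~\ref{lem:A.24} applies.
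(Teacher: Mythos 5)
Your argument is correct, but it takes a different route from the paper's. The paper's proof is a two-line application of the first isomorphism theorem: the canonical map $\pi:\scA\to\prod_{x\in X_\scA}\scA/I_x$ factors through the quotient $q:\scA\to\scB=\scA/\ker(\pi)$ with the induced map $\bar\pi:\scB\to\prod_{x\in X_\scA}\scA/I_x$ injective, and since each $\scA/I_x\cong\scA_x$ is local, this injection into a product of local rings is exactly what Definition~\ref{def:germ_determined} (via Lemma~\ref{lem:A.24}) requires --- no analysis of the $\R$-points or localizations of $\scB$ itself is ever made. You instead verify the intrinsic criterion of Lemma~\ref{lem:A.24} \emph{for $\scB$}, which forces you to identify $X_\scB$ with $X_\scA$ and $\scB_{\bar x}$ with $\scA_x$ (via the universal property of Lemma~\ref{lem:2.21} and surjectivity of $q$), or alternatively to run the element chase with the kernel formula \eqref{eq:5.13}; both versions of your argument check out, including the computation $\ker(\scB\to\scB_{\bar x})=q(I_x)=I_x/N$ and the step $\bigcap_x (I_x/N)=(\bigcap_x I_x)/N$, which is valid because $N\subseteq I_x$ for every $x$. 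What the paper's route buys is brevity: one injection and the definition. What your route buys is more information: it shows that passing to the germ-determined quotient $\scB$ changes neither the set of $\R$-points nor the localizations, and exhibits the kernels $I_x/N$ explicitly, facts that are not recorded by the paper's proof and could be reused (for instance when comparing $\Spec(\scA)$ with $\Spec(\scB)$). Your handling of the degenerate case $X_\scA=\varnothing$ is also fine.
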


\begin{proof}
By the first isomorphism theorem the map $\pi:\scA \to \prod_{x\in X_\scA}
\scA/I_x$ factors through the projection $\scA \to \scB =
\scA/\ker(\pi)$:
\[
  \xy
(-10,10)*+{\scA }="1";
(18,10)*+{\prod_{x\in X_\scA } \scA/I_x}="2";
(-10, -6)*+{\scB}="3";
{\ar@{->} ^{\pi\qquad } "1";"2"};
{\ar@{->}_{} "1";"3"};
{\ar@{->}_{\bar{\pi}} "3";"2"};
\endxy
\]  
with $\bar{\pi}$ injective.  Hence by Lemma~\ref{lem:A.24} the
$\cin$-ring $\scB$ is germ-determined.
\end{proof}

\mbox{}\\
\subsection{Modules and derivations}

\begin{definition}\label{def:mod}
   A {\sf module} over a $\cin$-ring $\scA$ is a module over the
   $\R$-algebra underlying $\scA$.
 \end{definition}

 \begin{example}
Let $E\to M$ be a (smooth) vector bundle over a manifold $M$.  Then
the set of sections $\Gamma(E)$ is a module over the $\cin$-ring
$\cin(M)$.
In particular the module $\Omega^1(M)$ of the ordinary de Rham 1-forms is a module
over the $\cin$-ring $\cin(M)$.
 \end{example} 

 \begin{example}
Given a map $\varphi:\scA\to \scB$ of $\cin$-rings, $\scB$ is a module
over $\scA$.
\end{example}
   
\begin{remark}
A module $\scM$ over a
$\cin$-ring $\scA$ is a Beck module \cite{Beck, Barr}:  just as in the
case of commutative rings --- given a $\cin$-ring $\scA$ and an
$\scA$-module $\scM$, the product  $\scA\times \scM$ together with the projection $p:\scA\times \scM\to \scA$ on the
first factor is an abelian group object in the slice category
$\cring/\scA$ of $\cin$-rings over $\scA$.
\end{remark}

\begin{definition}  \label{def:der2} Let $\scA$ be a $\cin$-ring and $\scM$ an
  $\scA$-module.  A {\sf $\cin$ derivation  of $\scA$ with values in the
  module $\scM$ }is a map 
$X:\scA\to \scM$ so
that for any $n>0$, any $f\in \cin(\R^n)$ and any $a_1,\ldots, a_n\in
\scA$
\begin{equation}\label{eq:der}
X(f_\scA(a_1,\ldots,a_n)) = \sum_{i=1}^n (\partial_i f)_\scA(a_1,\ldots, a_n)\cdot X(a_i).
 \end{equation} 
\end{definition}

\begin{notation} \label{not:der}
Given a $\cin$-ring $\scA$ and an $\scA$-module $\scM$ we denote the
set of all $\cin$-derivations with values in $\scM$ by $\CDer(\scA,
\scM)$.  If the module $\scM = \scA$ we write $\CDer(\scA)$ for $
\CDer(\scA, \scA)$.
\end{notation}
\begin{remark}
The set of derivations $\CDer(\scA, \scM)$ is an $\scA$-module.
\end{remark}

\begin{example}
Let $M$ be a smooth manifold. A $\cin$ derivation $X:\cin(M) \to
\cin(M)$ of the $\cin$-ring of smooth 
functions $\cin(M)$ with values in $\cin(M)$ is an ordinary vector
field.

The exterior derivative $d: \cin(M) \to \Omega^1(M)$ is a $\cin$
derivation of $\cin(M)$ with values in the module $\Omega^1(M)$ of the 
ordinary 1-forms.
\end{example}

Recall that given a smooth map $f:M\to N$ between two manifolds
vector fields $v\in \cX(M)$ and $w\in \cX(N)$ are {\sf $f$-related} if
$w\circ f = Tf \circ v$.  Equivalently
\[
v (f^* h) = f^* (w(h))
\]  
for all smooth functions $h\in \cin(N)$.  That is
\[
v \circ f^* = f^* \circ w.
\]  
This motivates the following
definition.

\begin{definition} \label{def:related-der}
Let $\varphi : \scA \to \scB$ be a map between two $\cin$-rings.  Two
derivations $w\in \CDer(\scA)$ and $v \in \CDer (\scB)$ are
$\varphi$-related if
\[
v \circ \varphi = \varphi \circ w.
\]  
\end{definition}  

\subsection{Finitely generated $\cin$-rings.}
\begin{remark} \label{rmrk:free}
The $\cin$-ring  $\cin(\R^n)$ is a free $\cin$-ring generated by the
standard 
coordinate functions $x_1,\ldots, x_n:\R^n \to \R$ \cite[p.~17, Proposition~1.1]{MR}. This is because the $\cin$-ring operations on
$\cin(\R^n)$ are given by composition, that is,
\[
f_{\cin(\R^n)}(a_1, \ldots, a_k)  = f\circ (a_1, \ldots, a_k)
\]  
for all $k$, all $f\in \cin(\R^k)$ and all $a_1,\ldots, a_k$.
Consequently
\[
f_{\cin(\R^n)} (x_1, \ldots, x_n)  = f\circ (x_1,\ldots, x_n) = f
\]  
for all $f\in \cin(\R^n)$.  
\end{remark}

In view of Remark~\ref{rmrk:free} the following definition makes sense.
\begin{definition}\label{def:fg}.
A $\cin$-ring $\scA$ is {\sf finitely generated} if for some
$n\geq 0$ there is a
surjective map of $\cin$-rings $\Pi:\cin(\R^n) \to \scA$.
\end{definition}

\end{document}